\documentclass{amsart}
\usepackage{graphicx} % Required for inserting images

\usepackage{amsmath}
\usepackage{amssymb}
\usepackage{amsthm}
\usepackage{hyperref}
\usepackage{color}
\usepackage{a4wide}
\usepackage{dsfont}
\usepackage{mathdots}
\usepackage{tikz-cd}
\usepackage[all]{xy}
\usepackage{pgf,tikz,pgfplots}
\makeatletter

\newcommand{\fraka}{\mathfrak{a}}

\newcommand{\CC}{\mathbb{C}}

\newcommand{\NN}{\mathbb{N}}

\newcommand{\RR}{\mathbb{R}}
\newcommand{\ZZ}{\mathbb{Z}}
\newcommand{\calC}{\mathcal{C}}
\newcommand{\calD}{\mathcal{D}}

\newcommand{\calF}{\mathcal{F}}

\newcommand{\calL}{\mathcal{L}}

\newcommand{\calS}{\mathcal{S}}

\newcommand{\calV}{\mathcal{V}}
\newcommand{\calW}{\mathcal{W}}

\newcommand{\0}{{\bf 0}}
\newcommand{\N}{\mathbb{N}}
\newcommand{\Z}{\mathbb{Z}}

\newcommand{\R}{\mathbb{R}}
\newcommand{\C}{\mathbb{C}}

\DeclareMathOperator{\GL}{GL}

\DeclareMathOperator{\Ind}{Ind}

\DeclareMathOperator{\DSBO}{DSBO}

\DeclareMathOperator{\Hom}{Hom}

\DeclareMathOperator{\rest}{rest}

\DeclareMathOperator{\sgn}{sgn}

\DeclareMathOperator{\diag}{diag}
\DeclareMathOperator{\supp}{supp}

\renewcommand\Re{\operatorname{Re}}

\theoremstyle{plain}
\newtheorem{theorem}{Theorem}[section]
\newtheorem{proposition}[theorem]{Proposition}
\newtheorem{lemma}[theorem]{Lemma}
\newtheorem{corollary}[theorem]{Corollary}

\newtheorem{theoremIntro}{Theorem}

\theoremstyle{definition}
\newtheorem{definition}[theorem]{Definition}
\newtheorem{example}[theorem]{Example}

\newtheorem{remark}[theorem]{Remark}

\newcommand{\ql}[1]{\textcolor{red}{QL: #1}}

%\numberwithin{equation}{section}

\title{Differential symmetry breaking operators for the pair $(\GL_{n+1}(\RR),\GL_n(\RR))$ }
%\title{Differential symmetry breaking operators between line bundles of real flag varieties}

\author{Jonathan Ditlevsen}
\address{Graduate School of Mathematical Sciences, The University of Tokyo, 3-8-1 Komaba, Meguro-ku, Tokyo 153-8914, Japan}
\email{JonathanDitlevsen@gmail.com}

\author{Quentin Labriet}
\address{Centre de recherches math\'ematiques, Universit\'e de Montr\'eal, P.O. Box 6128, Centre-ville Station, Montr\'eal (Qu\'ebec), H3C 3J7}
\email{quentin.labriet@umontreal.ca}

\pgfplotsset{compat=1.18}
\begin{document}

\begin{abstract}
    \noindent In this article we study differential symmetry breaking operators between principal series representations induced from minimal parabolic subgroups for the pair $(\GL_{n+1}(\R),\GL_n(\R))$. Using the source operator philosophy we construct such operators for generic induction parameters of the representations and establish that this approach yields all possible operators in this setting.  We show that these differential operators occur as residues of a family of symmetry breaking operators that depends meromorphically on the parameters. Finally, in the $n=2$ case we classify and construct all differential symmetry breaking operators for any parameters, including the non-generic ones. 
\end{abstract}
\maketitle
%%%%%%%
%%%%%%%
%%%%%%%
{\noindent \raggedright \small\textit{Keywords and phrases:} Branching problems, Symmetry breaking, Principal series representations, Intertwining operators, Source operator method.\\}
\vspace{1em}
\noindent \small\textit{2020 Mathematics Subject Classification:} Primary 22E45; Secondary 22E46.
%%%%%%
%%%%%%
%%%%%%
\section{Introduction}

Intertwining operators are fundamental objects in representation theory, with their construction and study tracing back to the origins of the field. In the modern representation theory of reductive groups, one of the central themes is the so-called branching problem. This problem focuses on understanding the restriction \( \pi|_H \) of an irreducible representation \( \pi \) of a group \( G \) to a subgroup \( H \). In this framework, a key objective is to construct and classify intertwining operators between \( \pi|_H \) and irreducible representations \( \tau \) of \( H \). These operators are commonly referred to as \textit{symmetry breaking operators}, a term introduced by T. Kobayashi \cite {Kobayashi15}.

Explicit expressions for symmetry breaking operators depend on the chosen models for \( \pi \) and \( \tau \). If these representations are realized on spaces consisting of smooth or holomorphic functions, an interesting class of symmetry breaking operators consists of those which are differential operators. These are called \emph{differential symmetry breaking operators} and the main objective of this article is to construct explicit examples of such operators. A well-known example of a family of differential symmetry breaking operators is the celebrated family of Rankin–Cohen brackets. Introduced first in \cite{Cohen} to classify bi-differential operators acting on modular forms, they can be interpreted as differential symmetry breaking operators for the tensor product of two holomorphic discrete series of $SL_2(\R)$(see \cite{Dijk_Pev,KP_SBO}).

In a series of papers \cite{KobayashiPevzner16_1,KP_SBO}, T. Kobayashi and M. Pevzner developed a general approach, known as the F-method, to study and explicitly construct differential symmetry breaking operators. For this approach, they relate the symbol of differential symmetry breaking operators to the polynomial solutions of certain partial differential equations. They also prove a duality theorem, which identifies the space of differential symmetry breaking operators with a space of homomorphisms between generalized Verma modules. This insight reveals that the problem of constructing differential symmetry breaking operators is algebraic in nature. The F-method became a standard approach to classify differential symmetry breaking operators and has been successfully applied in various settings (see for example \cite{KP_SBO, KobayashiKuboPevzner16,KobayashiSpeh-I-15, KobayashiSpeh18,FrahmWeiske2020, KuboOrsted2024,P-V23}).

In this paper, we study differential symmetry breaking operators between principal series representations induced from minimal parabolic subgroups for the pair \((G, H) = (\mathrm{GL}_{n+1}(\mathbb{R}),\allowbreak \mathrm{GL}_n(\mathbb{R}))\). These operators have been fully classified for all finite multiplicity pairs $(G,H)$ with $ \operatorname{rank}G=1$, for spherical principal series representations, thanks to the F-method see e.g. \cite{KobayashiSpeh-I-15,KobayashiSpeh18,FrahmWeiske2020}. 
For higher rank groups, only some special cases have been studied so far for non-degenerate principal series (e.g. \cite{BenSaidClercKoufany2020,KuboOrsted2024,FrahmWeiske19}), since the problem becomes more sophisticated. One initial reason is that the system of differential equations that arises gets more complicated. In this paper we exhibit another phenomenon which explains the complexity of the higher rank case. This is due to the existence of several different embeddings of $H$ into $G$, unlike in the rank one case where there is only one, this then leads to different classes of differential operators (see Section \ref{sec:DSBOGeneral} for the details). As a consequence, the F-method should be applied once for each different embedding. A last difficulty can appear when the nilradical $N$ in the Langlands decomposition of the parabolic subgroup is non-abelian in which case the system of differential equations has order higher than two (see \cite[Proposition 3.10]{KobayashiPevzner16_1}). 

In our setting, $\GL_{n+1}(\R)$ is of rank $n+1$ and the nilradical is the space of upper triangular unipotent matrices which is non abelian if $n>1$. Thus we face all the problems mentioned in the previous paragraph and applying the F-method presents significant difficulties in this context. Thus we adopt an alternative approach known as the \emph{source operator philosophy}, inspired by the work of R. Beckmann and J-L. Clerc \cite{Beck_Clerc}, and used with success in different settings (e.g. \cite{Clerc2017,BenSaidClercKoufany2020}). We talk about a philosophy here since there does not exist a general recipe for it, and no results guaranteeing it works. Using this philosophy, we construct a large class of differential symmetry breaking operators. We are able to show that for `generic' parameters  these are all the differential symmetry breaking operators.

\subsection{Results}
We will now present the source operator philosophy together with the main results of the article. Set $(G,H)=(\GL_{n+1}(\R),\GL_n(\R))$, and consider their respective minimal parabolic subgroup $P_G$ and $P_H$. In this setup $(G/P_G, H/P_H)$ is a pair of real flag manifolds. We consider the principal series representations as smooth normalized induction
\[
\pi_{\xi,\lambda}=\Ind_{P_G}^G(\xi\otimes e^\lambda\otimes 1),\qquad \tau_{\eta,\nu}=\Ind_{P_H}^H(\eta\otimes e^\nu\otimes 1),
\]
where $(\xi,\lambda)\in (\ZZ/2\ZZ)^{n+1}\times \CC^{n+1}$ and $(\eta,\nu)\in (\ZZ/2\ZZ)^n\times \CC^n$.  Our goal is then to construct a basis of the space of differential symmetry breaking operators between $\pi_{\xi,\lambda}|_H$ and $\tau_{\eta,\nu}$.
 
The source operator philosophy can be described as follows (see Section \ref{subsec:SourceOperator} for more details).
First, we classify the polynomials $p$ on $G$ such that $f\mapsto pf$ are $H$-intertwining operators between principal series $\pi_{\xi,\lambda}$ and $\pi_{\xi',\lambda'}$. Then we conjugate these multiplication operators by Knapp--Stein intertwining operators $T_{\xi,\lambda}^w:\pi_{\xi,\lambda}\to \pi_{w(\xi,\lambda)}$ to obtain two different explicit families of $H$-intertwining differential operators, namely $\calD_i$ and $\calF_j$. They will play the role of the source operator in the sense that composition of such operators leads to differential $H$-intertwining operators between principal series representations of $G$. The fact that these operators are differential operators is non trivial and was proved together with their explicit expression by the first author in \cite{DitlevsenFrahm24}. However, viewing these operators as $H$-intertwining and using them to construct differential symmetry breaking operators is new in this context.

Finally, let us call $\calL:\pi_{\xi,\lambda}\to \pi_{\xi',\lambda'}$ a composition of operators $\calD_i$ and $\calF_j$. Then for any symmetry breaking operator $A:\pi_{\xi',\lambda'}\to \tau_{\eta,\nu}$ the composition $A\circ \calL$ is also a symmetry breaking operator. We consider this composition for two different types of symmetry-breaking operators.

\subsection*{The restriction operators}
First, we compose with restriction operators. In $G$ there are exactly $n+1$ cosets $x_kP_G$ such that $P_Hx_kP_G\subseteq x_kP_G$ and each of them defines a symmetry breaking operator $\rest_k$  given by $f\mapsto f(\,\cdot\, x_k)$. Differential symmetry breaking operators will then be of the form $\rest_k\circ D$ for some differential operator $D$ on $G$ and the distributional support of $\rest_k\circ D$ will be $x_kP_G$. This situation differs from the real rank-one case, where only one such map exists. Composing the restriction maps $\rest_k$ with combinations of some of the $\calF_i$ and $\calD_j$ gives differential symmetry breaking operators. However, such construction might lead to zero operators and so for $\alpha\in \NN^n$ we consider the following combinations
\[
\calL_{\alpha,k}=\calF_1^{\alpha_1}\circ\cdots\circ \calF_k^{\alpha_k}\circ\calD_{k+2}^{\alpha_{k+1}}\circ\cdots\circ \calD_{n+1}^{\alpha_n}.
\]
We then show the following theorem

\begin{theoremIntro}
    Let $k\in \{0,\dots,n\}$ and $\lambda \in \C^n$ such that $\lambda_i-\lambda_j\notin \Z$ for all $i,j$. Then the dimension of the space of differential symmetry breaking operators with distributional support $x_kP_G$ is either $0$ or $1$. When it is $1$, there exists $\alpha\in \N^n$ such that a basis of the space of differential symmetry breaking operators is given by 
    \[\rest_k\circ\calL_{\alpha,k}:\pi_{\xi,\lambda}\to \tau_{\eta,\nu},\] 
    with \[\eta=(\xi_1+\alpha_1,\dots,\xi_k+\alpha_k,\xi_{k+2}+\alpha_{k+1},\dots,\xi_{n+1}+\alpha_n),\] and \[\nu=(\lambda_1+\tfrac{1}{2}+\alpha_1,\dots,\lambda_k+\tfrac{1}{2}+\alpha_k,\lambda_{k+2}-\tfrac{1}{2}-\alpha_{k+1},\dots,\lambda_{n+1}-\tfrac{1}{2}-\alpha_{n}).\]
\end{theoremIntro}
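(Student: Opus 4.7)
The plan is to split the argument into a construction part and a uniqueness part, with the non-resonance hypothesis on $\lambda$ entering crucially in the latter. For the construction I would combine two inputs. First, by the source-operator construction of Section \ref{subsec:SourceOperator} and the explicit formulas from \cite{DitlevsenFrahm24}, each $\calF_j$ and $\calD_i$ is an $H$-intertwining differential operator between $G$-principal series, shifting exactly one slot of the inducing parameter (so that $\calF_i$ increases the $i$-th entry of $\lambda$ by $1$ and flips the parity of $\xi_i$, while $\calD_j$ decreases the $j$-th entry of $\lambda$ by $1$ and flips the parity of $\xi_j$); compositions therefore remain $H$-intertwining differential operators, and $\calL_{\alpha,k}$ sends $\pi_{\xi,\lambda}$ to some $\pi_{\xi'\!,\lambda'}$ whose parameters are the prescribed shifts by the exponents $\alpha_i$. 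Second, for this specific Bruhat cell, $\rest_k$ is an $H$-intertwining operator from $\pi_{\xi'\!,\lambda'}|_H$ into $\tau_{\eta,\nu}$ precisely when $(\eta,\nu)$ is obtained from $(\xi'\!,\lambda')$ by deleting the $(k+1)$-th slot and applying the half-sum-of-roots shift coming from $P_H\cap x_kP_Gx_k^{-1}$, namely $+\tfrac{1}{2}$ on slots $1,\dots,k$ and $-\tfrac{1}{2}$ on slots $k+2,\dots,n+1$. Composing the two shifts yields the formulas for $(\eta,\nu)$ in the statement.

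For uniqueness I would localize near the closed orbit $x_kP_G$. Any DSBO with distributional support contained in $x_kP_G$ is of the form $\rest_k\circ D$, where $D$ is a differential operator on $G$ whose relevant part is a polynomial in the normal derivatives to $x_kP_G$. The $\mathfrak{p}_H$-intertwining property then translates, via the Kobayashi--Pevzner duality, into a homomorphism condition between generalized Verma modules, equivalently into the F-system satisfied by the symbol. The hard step, and the main obstacle of the proof, is to show that, under the hypothesis $\lambda_i-\lambda_j\notin\mathbb{Z}$, this system admits at most a one-dimensional solution space for each prescribed target infinitesimal character. The idea is that the nilradical of $P_H$ acts on the symbol by a first-order recursion whose Levi eigenvalues are integer combinations of the $\lambda_i$'s; non-resonance ensures that these eigenvalues never vanish except at one canonical highest-weight vector, ruling out the sporadic singular vectors that would otherwise enlarge the space.

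To conclude, I would observe that the map $\alpha\mapsto \nu$ defined in the statement is injective on $\mathbb{N}^n$: two distinct multi-indices producing the same $\nu$ would force a non-trivial integral linear relation among the $\lambda_i$'s, contradicting genericity. Combined with the one-dimensional upper bound above, this yields the dichotomy: either there exists a (unique) $\alpha\in\mathbb{N}^n$ matching the prescribed $(\eta,\nu)$, in which case $\rest_k\circ\calL_{\alpha,k}$ is a non-zero DSBO spanning the space, or no such $\alpha$ exists and the space is zero.
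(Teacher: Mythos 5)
Your proposal identifies the correct constructive ingredients, but the argument as written has two genuine gaps, and it takes a different (and heavier) route than the paper on the uniqueness step.

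\textbf{Gap 1: non-vanishing of $\rest_k\circ\calL_{\alpha,k}$.} You assert that when a matching $\alpha$ exists the operator $\rest_k\circ\calL_{\alpha,k}$ is non-zero, but this is not at all automatic: compositions of intertwining differential operators with a restriction map can very well vanish (indeed, Corollary \ref{cor:restVanish} exhibits many vanishing compositions $\rest_k\circ\calD_i$ and $\rest_k\circ\calF_j$, which is precisely why $\calL_{\alpha,k}$ is assembled from the specific subset $\calF_1,\dots,\calF_k,\calD_{k+2},\dots,\calD_{n+1}$). The paper devotes Section 4 (Bernstein--Sato identities, the residue formula in Theorem \ref{th:RestrictionAsResidue}, and the explicit evaluation in Proposition \ref{prop:Non-spherical evaluation}) to proving that $\rest_k\circ\calL_{\alpha,k}\neq 0$ for generic $\lambda$; some substitute for this is required.

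\textbf{Gap 2: the dimension-zero case.} Your dichotomy argues ``no matching $\alpha$ exists, hence the space is zero,'' but this does not follow from the upper bound $\dim\leq 1$ nor from injectivity of $\alpha\mapsto\nu$. The upper bound only tells you the space has dimension $0$ or $1$; you still need to rule out a non-zero DSBO when the parameter relations fail. The paper does this by deriving, directly from the $P_H\times P_G$-equivariance of the distributional kernel, explicit first-order ODEs and parity constraints (Lemma \ref{lemm:DifferentialEquationGeneral}); since the kernel is supported at a point it is a finite sum of derivatives of $\delta$, and homogeneity then forces the integrality conditions defining $L_k$. Without some version of this step, the zero-dimensional half of the dichotomy is unsupported.

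\textbf{A different route on the upper bound.} For the $\dim\leq 1$ statement you propose going through the Kobayashi--Pevzner duality and analyzing the F-system, flagging this as ``the hard step.'' The paper sidesteps this entirely: for generic $\lambda$ in $L_k$ the target parameter $(\eta,\nu)$ is also generic, hence both $\pi_{\xi,\lambda}$ and $\tau_{\eta,\nu}$ are irreducible, and the Sun--Zhu multiplicity-one theorem for $(\GL_{n+1}(\R),\GL_n(\R))$ gives $\dim\Hom_H(\pi_{\xi,\lambda},\tau_{\eta,\nu})\leq 1$ at once. Your F-method route would indeed give more (it would not require knowing Sun--Zhu), but as the introduction of the paper explains, the F-system here is genuinely hard because the nilradical is non-abelian and there are $n+1$ distinct supports; your heuristic about a first-order recursion on the symbol is not an adequate stand-in. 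Finally, a small correction: $\calF_i$ does not increase $\lambda_i$ by $1$; it lies in $\Hom_H(\pi_{\xi,\lambda},\pi_{\xi-\hat e_i,\lambda-\hat e_i}\otimes\det|_H)$, i.e.\ it lowers every entry except the $i$-th by $1$ (with a $\det_H$ twist).
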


In Theorem \ref{thm:genericDSBO} we describe explicitly for which parameters it is of dimension $0$ or $1$, and which parameter $\alpha$ is involved. Parameters $\lambda$ satisfying $\lambda_i-\lambda_j\notin \Z$  will be called \emph{generic} and for these parameters the principal series representations are irreducible. We know that $(G,H)$ is a multiplicity-one pair as shown by Sun-Zhu (see \cite{SunZhu2012}). Thus for generic parameters the dimension is at most $1$. To find when it is of dimension $0$ we look at the distributional kernel associated with a differential symmetry breaking operator and show that in order for it to have the right equivariance properties the parameters must belong in a specific set. When we are not in the multiplicity zero case we prove that $\rest_k\circ \calL_{\alpha,k}\neq 0$  and thus is a basis of the space of differential symmetry breaking operators.

\subsection*{The holomorphic family}
The other type of symmetry breaking operator we compose $\calL_{\alpha,k}$ with is an explicit meromorphic family $A_{\xi,\lambda}^{\eta,\nu}:\pi_{\xi,\lambda}\to \tau_{\eta,\nu}$ that J. Frahm and the first author studied in \cite{DitlevsenFrahm24}. The composition with $\calL_{\alpha,k}$ gives a functional equation of the type $A_{\xi',\lambda'}^{\eta,\nu}\circ\calL_{\alpha,k}=p(\xi,\lambda,\eta,\nu)A_{\xi,\lambda}^{\eta,\nu}$ where $p$ is a polynomial in $\lambda$ and $\nu$. This observation can be used to show:
\begin{theoremIntro}
The differential symmetry breaking operators $\rest_k\circ \calL_{\alpha,k}$ all occurs as residues of the meromorphic family $A_{\xi,\lambda}^{\eta,\nu}$.
\end{theoremIntro}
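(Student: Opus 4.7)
The main tool is the functional equation
\[
A_{\xi',\lambda'}^{\eta,\nu}\circ\calL_{\alpha,k} = p(\xi,\lambda,\eta,\nu)\, A_{\xi,\lambda}^{\eta,\nu}
\]
recalled in the introduction. For parameters with $p\neq 0$ this rearranges to
\[
A_{\xi,\lambda}^{\eta,\nu} = \frac{1}{p(\xi,\lambda,\eta,\nu)}\, A_{\xi',\lambda'}^{\eta,\nu}\circ\calL_{\alpha,k},
\]
which exhibits the zero locus of $p$ as the candidate pole locus of $A_{\xi,\lambda}^{\eta,\nu}$. The plan is to compute the residue of $A_{\xi,\lambda}^{\eta,\nu}$ along this locus and identify it, up to a nonzero scalar, with $\rest_k\circ\calL_{\alpha,k}$.

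I would first reduce to the base case $\alpha=0$, where $\calL_{0,k}$ is the identity (empty composition), so that the claim becomes: the restriction operator $\rest_k$ itself arises as a residue of the family $A_{\xi,\lambda}^{\eta,\nu}$ at the parameters from Theorem A with $\alpha=0$. This I would prove by working directly with the distributional kernel of $A_{\xi,\lambda}^{\eta,\nu}$ constructed in \cite{DitlevsenFrahm24}: at the boundary parameters $\nu_i = \lambda_i + \tfrac{1}{2}$ for $i\le k$ and $\nu_i = \lambda_{i+1} - \tfrac{1}{2}$ for $i>k$, the meromorphically normalized kernel should develop a simple pole whose residue is proportional to the delta distribution supported on the closed orbit $P_H x_k P_G$, which, upon translation through the realization of $\pi_{\xi,\lambda}$ as smooth sections, is exactly $\rest_k$.

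For general $\alpha$ I would then use the functional equation as a shifting device. Fix parameters $(\lambda_0,\nu_0)$ from Theorem A for which $\rest_k\circ\calL_{\alpha,k}$ is a nonzero differential SBO. Inspection of the explicit polynomial $p$ (again from \cite{DitlevsenFrahm24}) should show that $p(\xi,\lambda_0,\eta,\nu_0)=0$, that this zero is simple along a suitably chosen one-parameter curve, and that the translated parameter $(\xi',\lambda'_0)$ falls into the boundary regime handled by the base case. Taking the residue along the curve and using the standard formula
\[
\operatorname{Res} A_{\xi,\lambda}^{\eta,\nu} = \frac{1}{dp(v)}\, A_{\xi',\lambda'_0}^{\eta,\nu_0}\circ\calL_{\alpha,k},
\]
I substitute the base case identification $A_{\xi',\lambda'_0}^{\eta,\nu_0} = c\,\rest_k$ to obtain the residue as a nonzero multiple of $\rest_k\circ\calL_{\alpha,k}$. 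Theorem A guarantees that this composition is nonzero, which closes the argument.

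The main obstacle is the base case: one must show that the explicit meromorphic normalization of $A_{\xi,\lambda}^{\eta,\nu}$ degenerates to the restriction operator $\rest_k$ at the predicted boundary parameters. This is a careful residue calculation with distributional kernels on the Bruhat cells of $G/P_G$ and depends on the compatibility between the normalization of the family in \cite{DitlevsenFrahm24} and the normalization used in defining $\rest_k$; any mismatch enters as a nonzero scalar and is harmless. A secondary point is verifying simplicity of the pole of $A_{\xi,\lambda}^{\eta,\nu}$ at $(\lambda_0,\nu_0)$ and regularity of $A_{\xi',\lambda'_0}^{\eta,\nu_0}$ at the translated parameter, both of which should be extractable from the same explicit analysis.
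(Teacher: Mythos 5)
Your overall strategy — establish the base case $\alpha=0$ (the restriction operator $\rest_k$ is a residue of the family), then shift to general $\alpha$ via the Bernstein--Sato functional equation $\mathbf{A}\circ\calL_{\alpha,k}=(\text{polynomial})\times\mathbf{A}$ — is the same as the paper's. The difference is that you have a real gap exactly where you flag the difficulty, and a minor stylistic inefficiency.

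The gap is in the base case. You claim that at the boundary parameters $\nu_i=\lambda_i+\tfrac12$ $(i\le k)$, $\nu_i=\lambda_{i+1}-\tfrac12$ $(i>k)$, the normalized kernel degenerates to the delta supported on $x_kP_G$, hence to $\rest_k$. This is not what happens if you just substitute these parameters into the spectral parameters $(s,t)$: the $s_i$ do not become $-1$ and the $t_i$ do not become $0$ at the naively expected indices, because of the index reversal $\nu_{n+1-i}$ built into the definition of $(s,t)$. The paper instead first takes the kernel residue at a \emph{permuted} set of parameters $(\eta,\nu)=w_0^H\bigl(\eta_k(x_k^{-1}\xi),\,\nu_k(x_k^{-1}\lambda)\bigr)$ (Lemma \ref{Lemma:KernelResidue}), which genuinely puts all $s_j=-1$ for $j\le k$ and $s_j=0$ otherwise; the operator-level residue one obtains there is not $\rest_k$ but the conjugated operator $T^{w_0^H}_{w_0^H(\eta,\nu)}\circ\rest_k\circ T^{x_k^{-1}}_{\xi,\lambda}$ (Lemma \ref{Lemma:OperatorResidue}). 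Reducing this to $\operatorname{const}\times e_G(\xi,\lambda)e_H(\eta,-\nu)\rest_k$ requires a second application of the Knapp--Stein functional equations \eqref{eq:functionalEq1}, \eqref{eq:functionalEq2} and the Gindikin--Karpelevich formula (proof of Theorem \ref{th:RestrictionAsResidue}). Your ``careful residue calculation'' phrasing acknowledges there is work to do, but it misses that the residue of the kernel is not directly a delta at $x_k$ — the conjugation by two Knapp--Stein operators is the actual content and must be undone.

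A smaller point: you propose working with the unnormalized family $A^{\eta,\nu}_{\xi,\lambda}$ and verifying simplicity of poles along a one-parameter curve. The paper avoids this entirely by phrasing everything in terms of the already-holomorphic normalization $\mathbf{A}^{\eta,\nu}_{\xi,\lambda}=\gamma^{-1}A^{\eta,\nu}_{\xi,\lambda}$: Theorem \ref{th:RestrictionAsResidue} is then an identity of holomorphic operators, and Theorem \ref{thm:DSBOasresidue} follows by precomposing with $\calL_{\alpha,k}$ and citing Corollary \ref{cor:functionalequation}, with no pole-order analysis. Adopting the normalized family makes your ``secondary point'' disappear and is the cleaner route.
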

The proof of this theorem relies on taking residues such that $A_{\xi,\lambda}^{\eta,\nu}=\operatorname{const}\times \rest_k$ and then composing with $\calL_{\alpha,k}$ and using the functional equation. We use this relation to get a sufficient condition for the differential symmetry breaking operators $\rest_k\circ \calL_{\alpha,k}$ to be non-vanishing. In \cite{KobayashiSpeh18} T. Kobayashi and B. Speh introduced the notions of regular and sporadic differential symmetry breaking operators. If a differential symmetry breaking operator occurs as a residue of meromorphic family of symmetry breaking operators it is called regular and if not it is called sporadic. With this terminology we can rephrase Theorem B as saying $\rest_k\circ \calL_{\alpha,k}$ is regular symmetry breaking operator. 

\subsection*{The \texorpdfstring{$n=2$}{n=2} case}
The last part of this article is devoted to the full classification of differential symmetry breaking operators in the $n=2$ case. In this section we do not restrict to generic parameters. To achieve the classification we solve differential equations for the distributional kernel of differential symmetry breaking operators. This allows us to describe explicitly the distributional kernel of such operators for the three different possibilities for their one point support. 
In the process we exhibit a dimension two phenomenon for a specific one point support. Note that this does not contradict the multiplicity one property for the pair $(G,H)$ since one of the involved representations is not irreducible. 

We also investigate to which extent the source operator philosophy accounts for all possible differential symmetry breaking operators. The problem occurs only for parameters which are not generic for which our operators $\rest_k\circ \calL_{\alpha,k}$ might be zero. In case needed, we solve this issue by the use of an extra differential intertwining operator between principal series of $\GL_3(\R)$ (or $\GL_2(\R)$). This leads to the construction of all differential symmetry breaking operators by means of our operators $\rest_k\circ \calL_{\alpha,k}$ even in the non generic situation.

\subsection*{Acknowledgment}
The authors would like to address special thanks to Pr. J.Frahm, Pr. T.Kobayashi and Pr. M.Pevzner for their useful comments and advices on this project. Both authors were partially supported by a research grant from the Villum Foundation (Grant No. 00025373). The authors acknowledge support of the Institut Henri Poincaré (UAR 839 CNRS-Sorbonne Université), and LabEx CARMIN (ANR-10-LABX-59-01). The first author is supported by the Carlsberg Foundation, grant CF24-045.

\subsection*{Notations}

Here are some general notations used in the paper. $\NN=\{0,1,2,\dots\}$. For $n\in \NN$ then $[n]\in \{0,1\}$ denotes the remainder of $n$ after division by 2. $e_i$ denotes the standard basis of $\C^n$ and $\mathds{1}$ is the vector in $\CC^n$ with all entries equal to one.

%%%%%%%%%%%%%%%%%%
%%%%%%%%%%%%%%%%%%
%%%%%%%%%%%%%%%%%%
%%%%%%%%%%%%%%%%%%

\section{Preliminaries}
\subsection{Structure and principal series representations for \texorpdfstring{$\GL(n,\R)$}{GL(n,R)}}\label{sec:principal series reps}
\noindent This section introduces the basic definitions and concepts for principal series representations for $\GL_n(\R)$ (see \cite{Knapp01} for more details). 

Let $G=\GL_n(\R)$, and $P_G$ its minimal parabolic subgroup consisting of upper triangular matrices. It admits the Langlands decomposition $P_G=M_GA_GN_G$ where  $M_G$ is the group of diagonal matrices with entries from $\{\pm1\}$ i.e. $M_G\simeq (\Z/2\Z)^n$, $A_G$ is the group of diagonal matrices with strictly positive real entries with Lie algebre $\fraka$ and $N_G$ is the group of unipotent upper triangular matrices. 

Let $\xi$ be a character of $M_G$, and $\lambda \in \fraka_\C^*$. Then $\xi\otimes e^\lambda\otimes 1$ is a one dimensional representation of $P_G=M_GA_GN_G$ where $1$ is the trivial representation of $N_G$. Using smooth normalized parabolic induction we obtain the principal series representation $\pi_{\xi,\lambda}=\Ind_{P_G}^G(\xi\otimes e^\lambda\otimes 1)$ as the left-regular representation of $G$ on 
\[
\{f\in \calC^\infty(G)\,|\,f(gman)=\xi(m)^{-1}a^{-\lambda-\rho}f(g)\,\,\forall man\in M_GA_GN_G\},
\]
with $\rho=\frac{1}{2}(n-1,n-3,\dots,3-n,1-n)$ the half sum of the positive roots. Since $\widehat{M_G}\simeq (\ZZ/2\ZZ)^n$ we get that a character $\xi=(\xi_1,\dots,\xi_{n})\in\{0,1\}^n$ is described by 
\begin{align*}
\text{diag}(\varepsilon_1,\dots,\varepsilon_{n}) \in M_G \mapsto \sgn (\varepsilon_1)^{\xi_1}\dotsb \sgn (\varepsilon_{n})^{\xi_{n}}\in \{-1,1\} .
\end{align*} 
The isomorphism $\mathfrak{a}_{\CC}^*\simeq \C^{n}$ is realized by the map $\lambda\mapsto \big(\lambda(E_{1,1}),\dots, \lambda (E_{n,n})\big)$. Consider the following function defined on $\R^\times$
\[|x|_\xi^\lambda =\sgn(x)^\xi|x|^\lambda. \qquad (\xi\in \ZZ/2\ZZ, \; \lambda \in\CC)
\]
We can then consider the principal series representations as functions $f\in \calC^\infty(G)$ which satisfy
\[
f(gman)=|x_1|^{-\lambda_1-\frac{n-1}{2}}_{\xi_1}\cdots |x_n|^{-\lambda_n-\frac{1-n}{2}}_{\xi_n}f(g),\quad ma=\diag(x_1,\dots,x_n)\in M_G A_G, \, n\in N_G.
\]
The principal series representations are known to be irreducible if for all $1\leq i,j\leq n$ we have
\[\lambda_i-\lambda_j\notin
\begin{cases}
    2\Z+1 & \text{if } \xi_i=\xi_j,\\
    2\Z \backslash \{0\}& \text{if } \xi_i\neq\xi_j.
\end{cases}
\]
See e.g. \cite{Moeglin97}. We say that  $(\xi,\lambda)$ is \emph{generic} if $\lambda_i-\lambda_j\notin \ZZ$ which implies that $\pi_{\xi,\lambda}$ is irreducible. 
\begin{remark}
These representations can also be geometrically realized on the space of smooth sections of the line bundle $G\times_{P_G} (\xi\otimes e^\lambda \otimes 1)$ whose base space is the flag variety $G/P_G$.
\end{remark}
Let $K_G=\operatorname{O}(n)$ be the maximal compact subgroup of $G$ and $W_G:=N_{K_G}(A_G)/Z_{K_G}(A_G)$ be the Weyl-group of $G$ which is isomorphic to the symmetric group on $n$ letters $ \calS_n$. The Weyl group $W_G$ carries an action to $\widehat{M}_G$ by $[w\xi](m)=\xi(\tilde{w}^{-1}m\tilde{w})$ where $w=[\tilde{w}]\in W$ and similarly to $\fraka_\CC^*$ by $[w\lambda](a)=\lambda(\tilde{w}^{-1}a\tilde{w})$. This is simply the permutation of the components of $\xi$ and $\lambda$ under $w\in \calS_n$. Abusing notation, we will not distinguish between elements of $W$, $\calS_n$ and the permutation matrices corresponding to elements of $\calS_n$.

There exists a unique (up to scalar multiples) $G$-invariant integral $d(gP_G)$ on the space of sections of the bundle $G\times_{P_G}\CC_{2\rho_G}\to G/P_G$, where $\CC_{2\rho_G}=1\otimes e^{2\rho_G}\otimes 1$ is the one-dimensional representation corresponding to the modular function of $P_G$ (see e.g. \cite{CiobotaruFrahm23}). This integral gives rise to a $G$-invariant pairing 
\[
\langle\,\cdot\,,\,\cdot\,\rangle_{G/P_G}:\pi_{\xi,\lambda}\times \pi_{\xi,-\lambda}\to \CC,\qquad (f_1,f_2)\mapsto \int_{G/P_G}f_1(g)f_2(g)\,d(gP_G).
\]
Let $\overline{N}_G$ be the opposite unipotent radical which consists of the lower triangular unipotent matrices. Then on the open dense Bruhat cell $\overline{N}_GM_GA_GN_G$ this pairing can also be written as 
\[
\langle f_1,f_2\rangle_{G/P_G}=\int_{\overline{N}_G}f_1(\overline{n})f_2(\overline{n})\,d\overline{n},
\]
with $d\overline{n}$ being the suitably normalized Haar measure on $\overline{N}_G$ see  \cite[equation (5.25)] {Knapp01}.

%%%%%%%%%%%%%%%
%%%%%%%%%%%%%%%
%%%%%%%%%%%%%%%

\subsection{The Knapp--Stein intertwining operators}\label{sec:Knapp-Stein}
For every $w=[\tilde{w}]\in W_G$ the integral
\[
T_{\xi,\lambda}^wf(g)=\int_{\overline{N}_G\cap \tilde{w}^{-1}N_G\tilde{w}}f(g\tilde{w}\overline{n})\,d\overline{n},
\]
converges absolutely in some range of $\fraka_\CC^*$ and defines an intertwining operator $\pi_{\xi,\lambda}\to \pi_{w (\xi,\lambda)}$ known as the Knapp--Stein intertwining operator, see e.g. \cite{Knapp01} for a thorough exposition. It can be shown that the family of operators $T_{\xi,\lambda}^w$ can be meromorphically extended to all $\lambda\in \fraka_\CC^*$ (if viewed as an intertwining operator in the compact picture where the representation space is
independent of $\lambda$). These operators satisfies that
\begin{equation}\label{eq:Knapp-SteinProduct}
	T^{w'w}_{\xi,\lambda}=T^{w'}_{w(\xi,\lambda)}\circ T^{w}_{\xi,\lambda}\qquad (w,w'\in W_G)
\end{equation}
when $\ell(w'w)=\ell(w')+\ell(w)$, where $\ell$ denotes the length of an element in $W_G$. Here the length $\ell(w)$ corresponds to the minimal number of simple transpositions $w_i=(i \ i+1)$ needed to write a permutation $w$  in $\calS_n$. The Knapp--Stein intertwining operators satisfy the following relation:
\begin{equation}\label{eq:InverseTony}
T_{w_i(\xi,\lambda)}^{w_i}\circ T_{\xi,\lambda}^{w_i}=c_i(\xi,\lambda)\operatorname{id},
\end{equation}
where the scalar $c_i(\xi,\lambda)$ is the Harish--Chandra's $c$-function given by
\begin{equation}\label{eq:c-function} 
c_i(\xi,\lambda)=(-1)^{\xi_i+\xi_{i+1}}\pi\frac{\Gamma(\frac{\lambda_i-\lambda_{i+1}+[\xi_i+\xi_{i+1}]}{2})\Gamma(\frac{\lambda_{i+1}-\lambda_i+[\xi_i+\xi_{i+1}]}{2})}{\Gamma(\frac{\lambda_i-\lambda_{i+1}+1+[\xi_i+\xi_{i+1}]}{2})\Gamma(\frac{\lambda_{i+1}-\lambda_i+1+[\xi_i+\xi_{i+1}]}{2})}.
\end{equation}
More generally if $\sigma=w_{i_1}\dots w_{i_{\ell(\sigma)}}$ then
\[
T^{\sigma^{-1}}_{\sigma(\xi,\lambda)}\circ T^{\sigma}_{\xi,\lambda}=c_\sigma(\xi,\lambda)\times\operatorname{id}
\]
and the $c$-function can be computed using the Gindikin--Karpelevich formula (see e.g. \cite{Kna03}) as
\begin{equation}\label{eq:Gindikin--Karpelevich}
c_\sigma(\xi,\lambda)=\prod_{j=1}^{\ell(\sigma)}c_{i_j}\big(w_{i_{j-1}}\cdots w_{i_1}(\xi,\lambda)\big).
\end{equation}
For a simple transposition $w_i$ we have the following simple form of the Knapp--Stein operator:
\[
T_{\xi,\lambda}^{w_i}f(g)=\int_\RR |x|^{\lambda_i-\lambda_{i+1}-1}_{\xi_i+\xi_{i+1}}f(g\overline{n}_i(x))\,dx
\]
where $\overline{n}_i(t)=e^{tE_{i+1,i}}$ see e.g. \cite{DitlevsenFrahm24}. From classical theory of the Riesz distribution see e.g. \cite[Chapter 3.]{GelfandShilov64} we normalize the Knapp--Stein intertwiner by $\mathbf{T}_{\xi,\lambda}^{w_i}:=\Gamma(\frac{\lambda_i-\lambda_{i+1}+[\xi_i+\xi_{i+1}]}{2})^{-1}T_{\xi,\lambda}^{\eta,\nu}$ which makes it holomorphic and at the poles it becomes a differential operator given as
\begin{align*}
\mathbf{T}_{\xi,\lambda}^{w_i}f(g)\big |_{\lambda_i-\lambda_{i+1}+[\xi_i+\xi_{i+1}]=-2j}&=\frac{(-1)^{j+[\xi_i+\xi_{i+1}]}(j+[\xi_i+\xi_{i+1}])!}{(2j+[\xi_i+\xi_{i+1}])!}\int_\RR \delta^{(\lambda_{i+1}-\lambda_i)}(x)f(g\overline{n}_i(x))\,dx\\
&=(-1)^j \frac{(j+[\xi_i+\xi_{i+1}])!}{(2j+[\xi_i+\xi_{i+1}])!}(\varepsilon^{i+1,i})^{\lambda_{i+1}-\lambda_i}f(g),
\end{align*}
where $\varepsilon^{i+1,i}$ is defined in Section \ref{subsec:SourceOperator}.

%%%%%%%%%%%%%%%
%%%%%%%%%%%%%%%
%%%%%%%%%%%%%%%

\subsection{Symmetry breaking operators between principal series}
Let $G=\GL_{n+1}(\RR)$ and $H=\GL_n(\RR)$ and denote by $P_G=M_GA_GN_G$ and $P_H=M_HA_HN_H$ the minimal parabolic subgroup of $G$ and $H$. Write $\pi_{\xi,\lambda}$ $(\xi\in (\ZZ/2\ZZ)^{n+1}$, $\lambda\in \CC^{n+1})$ for the principal series of $G$ and $\tau_{\eta,\nu}$ $(\eta\in (\ZZ/2\ZZ)^n$, $\nu\in \CC^n)$ for the principal series of $H$. Let $\mathcal{D}'(G)$ denote the space of distributions on $G$. Following \cite{Frahm23} 
the space of symmetry breaking operators can be identified 
\[
\Hom_H(\pi_{\xi,\lambda},\tau_{\eta,\nu})\simeq \calD'(G)_{\xi,\lambda}^{\eta,\nu},
\]
where
\begin{multline}\label{eq:EquivarianceKernel}
	\calD'(G)_{\xi,\lambda}^{\eta,\nu} = \{K\in\calD'(G):K(m_Ha_Hn_Hgm_Ga_Gn_G)=\xi(m_G)\eta(m_H)a_G^{\lambda-\rho_G}a_H^{\nu+\rho_H}K(g)\\\mbox{for all }m_Ga_Gn_G\in P_G,m_Ha_Hn_H\in P_H,g\in G\},
\end{multline}
in the sense that a distribution $K\in\calD'(G)_{\xi,\lambda}^{\eta,\nu}$ defines a symmetry breaking operator $A\in\Hom_H(\pi_{\xi,\lambda},\tau_{\eta,\nu})$ by
\begin{equation}
    Af(h) = \int_{G/P_G} K(h^{-1}g)f(g)\, d(gP_G)=\langle K,f(h\,\cdot \,)\rangle_{G/P_G}. \qquad (f\in\pi_{\xi,\lambda},h\in H).\label{eq:SBOinTermsOfKernel}
\end{equation}
Here, the integral has to be understood in the distribution sense using the unique $G$-invariant integral. The support of such a distributional kernel needs to be a $P_H$-invariant set in $G/P_G$ see \cite[Chapter 3.]{KobayashiSpeh15}. 

\subsection{\texorpdfstring{$p$}{p}-differential operators}\label{sec:DSBOGeneral}
In this section, we recall the definition of a differential operator between two manifolds from \cite{KobayashiPevzner16_1} and describe the possible support for the kernel of these operators in the context of homogeneous spaces. We fix $X$ and $Y$ two smooth manifolds, $\mathcal{V}$ and $ \mathcal{W}$ two vector bundles over $X$ and $Y$ respectively. We denote $\mathcal{C}^\infty(X,\mathcal{V})$ the space of smooth sections on $\mathcal{V}$. 

\begin{definition}
Let $T : \mathcal{C}^\infty(X,\mathcal{V}) \to \mathcal{C}^\infty(Y,\mathcal{W})$ be a linear operator. If $p:Y\to X$ is a smooth map then $T$ is a \emph{$p$-differential operator} if for any $f\in \mathcal{C}^\infty(X,\mathcal{V})$
    \begin{equation}
        p(\supp Tf)\subset \supp f
    \end{equation} 
    where $\supp f$ denotes the support of a function $f$.
\end{definition}

\begin{remark}[{\cite[Example 2.4]{KobayashiPevzner16_1}}]
    Let $p:Y\to X$ be an immersion. Choose an atlas of local coordinates $(y_i, z_j )$ on $X$ in such a way that $y_i$ form an atlas on $Y$ . Then, every $p$-differential operator $T$ between $\calV$ and $\calW$ is a finite sum of the form
\[\sum_{\alpha,\beta} g_{\alpha,\beta} (y) \frac{\partial^{\alpha+\beta}}{\partial_y^\alpha\partial_z^\beta}, \]
where $g_{\alpha,\beta}(y)$ are smooth functions on $Y$ with values in the operators between the fibers of $\calV$ and $\calW$. This justifies the terminology "differential operators". 
\end{remark}

To a linear operator $T : \mathcal{C}^\infty(X,\mathcal{V}) \to \mathcal{C}^\infty(Y,\mathcal{W})$ we associate its distributional kernel $K_T\in \calD'(X\times Y)$. So that for $f\in \mathcal{C}^\infty(X,\mathcal{V}) $ we have
\[ Tf(y)=\langle K_T(y), f\rangle. \]
\begin{proposition}[{see \cite{KobayashiPevzner16_1}}]
A linear operator $T:\mathcal{C}^\infty(X,\mathcal{V}) \to \mathcal{C}^\infty(Y,\mathcal{W})$ is a $p$-differential operator if and only if its support $\supp (K_T)$ satisfies \[\supp (K_T)\subset \Delta(Y)=\{(p(y),y)\in X\times Y | y\in Y\}.\]
\end{proposition}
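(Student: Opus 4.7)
The plan is to prove both implications by contraposition, working directly with the Schwartz kernel pairing $\langle Tf,\phi\rangle = \langle K_T, f\otimes\phi\rangle$ and the standard fact that $(x_0,y_0)\notin\supp(K_T)$ iff there exist open neighborhoods $U\ni x_0$ and $V'\ni y_0$ such that $\langle K_T, f\otimes\phi\rangle = 0$ for every $f\in C_c^\infty(U,\calV)$ and $\phi\in C_c^\infty(V',\calW^*)$.

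For the direction ($\Rightarrow$), suppose $T$ is $p$-differential and $(x_0,y_0)\notin\Delta(Y)$, so $p(y_0)\neq x_0$. Pick disjoint open neighborhoods $U\ni x_0$ and $V\ni p(y_0)$ in $X$, and by continuity of $p$ choose an open neighborhood $V'\ni y_0$ with $p(V')\subset V$; in particular $p(V')\cap U=\emptyset$. For any $f\in C_c^\infty(U,\calV)$ the $p$-differential property gives $\supp(Tf)\subset p^{-1}(\supp f)\subset p^{-1}(U)$, which is disjoint from $V'$. Hence for any $\phi\in C_c^\infty(V',\calW^*)$ we have $\langle Tf,\phi\rangle=0$, i.e.\ $\langle K_T, f\otimes\phi\rangle=0$, so $(x_0,y_0)\notin \supp(K_T)$.

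For the direction ($\Leftarrow$), assume $\supp(K_T)\subset\Delta(Y)$ and let $f\in C^\infty(X,\calV)$. Given $y_0\notin p^{-1}(\supp f)$, choose an open neighborhood $U$ of $p(y_0)$ disjoint from $\supp f$, and then an open neighborhood $V'\ni y_0$ with $p(V')\subset U$. For $\phi\in C_c^\infty(V',\calW^*)$ we show $\langle Tf,\phi\rangle=0$. The idea is that if $(x,y)\in \supp(K_T)\cap(X\times \supp\phi)$, then $x=p(y)\in p(\supp\phi)\subset U$, so only values of $f$ on $U$ are seen by $K_T$ near $\supp\phi$; but $f$ vanishes on $U$. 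To make this precise, choose a cutoff $\chi\in C_c^\infty(U)$ that equals $1$ on a compact neighborhood of $p(\supp\phi)$. Then $(1-\chi)\otimes 1$ vanishes on a neighborhood of $\supp(K_T)\cap (X\times\supp\phi)$, so
\[
\langle Tf,\phi\rangle = \langle K_T, f\otimes\phi\rangle = \langle K_T,\chi f\otimes \phi\rangle,
\]
and the right-hand side is $0$ because $\supp(\chi f)\subset \supp\chi\cap\supp f\subset U\cap\supp f=\emptyset$. Thus $y_0\notin\supp(Tf)$, which gives $p(\supp Tf)\subset\supp f$.

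The main technical obstacle is the pairing $\langle K_T,f\otimes\phi\rangle$ in the second direction when $f$ is not compactly supported; the cutoff argument above is precisely what is needed to reduce to the compactly supported case covered by the Schwartz kernel theorem, exploiting that $\supp(K_T)$ meets $X\times\supp\phi$ only over the compact set $p(\supp\phi)$. Everything else amounts to combining the Hausdorff property, continuity of $p$, and the definition of the support of a distribution, so once this localization step is in place both implications follow immediately.
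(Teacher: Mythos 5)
The $(\Rightarrow)$ direction of your argument is fine: the disjoint-neighborhood construction, the continuity of $p$, and the observation that $p$-differentiality forces $\supp(Tf)\cap V'=\emptyset$ for $f\in C_c^\infty(U,\calV)$ correctly show that $K_T$ vanishes on $U\times V'$.

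The $(\Leftarrow)$ direction, however, has a gap that isn't closed by your cutoff. Your $\chi$ is taken in $C_c^\infty(U)$ with $U$ disjoint from $\supp f$, so $\chi f\equiv 0$ and hence $\langle K_T,\chi f\otimes\phi\rangle=\langle T(\chi f),\phi\rangle=0$ trivially. The middle equality $\langle K_T, f\otimes\phi\rangle = \langle K_T,\chi f\otimes\phi\rangle$ is therefore not a reduction to a simpler statement; it \emph{is} the goal $\langle Tf,\phi\rangle=0$ restated. Your justification (``$(1-\chi)\otimes 1$ vanishes near $\supp K_T\cap(X\times\supp\phi)$'') is a true fact, but it only yields the conclusion if you have already defined the pairing $\langle K_T, g\rangle$ for test functions $g\in C^\infty(X\times Y)$ that are not compactly supported in the $X$-direction, \emph{and} proven that this extended pairing agrees with $\langle Tf,\phi\rangle$. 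That last compatibility is precisely what requires continuity of $T$, an exhaustion $\chi_n\nearrow 1$, and the observation that $\supp K_T\cap (X\times\supp\phi)$ is compact; none of this is in your write-up, and your specific $\chi$ cannot play the role of these $\chi_n$ because $\chi_n$ must eventually equal $1$ near $p(\supp\phi)$ \emph{without} having $\chi_n f=0$. The cleaner route, and the one the paper's own convention $Tf(y)=\langle K_T(y),f\rangle$ with $K_T(y)\in\calE'(X)$ is pointing to, is to pass to slice kernels: $\supp K_T\subset\Delta(Y)$ forces $\supp K_T(\cdot,y)\subset\{p(y)\}$ for every $y$ (argue exactly as in your $(\Rightarrow)$ direction, testing against $f\otimes\phi$ with both compactly supported and then letting $\phi$ shrink to $y$), so $Tf(y)=\langle K_T(\cdot,y),f\rangle$ depends only on the jet of $f$ at $p(y)$; if $p(y)\notin\supp f$ then $Tf(y)=0$, which is $p(\supp Tf)\subset\supp f$ with no cutoff gymnastics at all.
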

Let $G$ and $H$ be Lie groups. From now on, we assume that $\mathcal{V}$ is a $G$-equivariant vector bundle with respect to the action of a Lie group $G$ and that both $\calV$ and $\calW$ are $H$-equivariant. The following Corollary describes what $p$ maps one can expect when the base spaces are homogeneous spaces. 
\begin{corollary} \label{cor:p-differential}
Let $X=G/G' $ and $Y=H/H'$ be two homogeneous spaces. If there exists a non-zero $p$-differential operator operator between $\calC^\infty(X,\calV)$ and $\calC^\infty(Y,\calW)$ then the map $p:Y\to X$ is $H$-intertwining. If this is the case then $p$ is defined by $p(eH')=y_0 G'$ with $y_0$ satisfying $H'y_0 G'\subset y_0 G'$. 
\end{corollary}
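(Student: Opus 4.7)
The plan is to combine two properties of the distributional kernel $K_T\in\calD'(X\times Y)$ associated to a non-zero $H$-intertwining $p$-differential operator $T$. The preceding proposition already supplies the containment $\supp K_T\subseteq\Delta(Y)=\{(p(y),y):y\in Y\}$, and the $H$-equivariance of $T$ combined with the $H$-equivariance of the bundles $\calV$ and $\calW$ translates into an $H$-equivariance of $K_T$ under the diagonal action of $H$ on $X\times Y$. In particular $\supp K_T$ is a closed, $H$-stable subset of $X\times Y$.

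Since $T\neq 0$, I would pick some $(p(y_0),y_0)\in\supp K_T$. For any $h\in H$, the diagonal action sends this point to $(h\cdot p(y_0),h\cdot y_0)$, which still lies in $\supp K_T\subseteq\Delta(Y)$. Because the second coordinate of a point of $\Delta(Y)$ determines the first, one must have $h\cdot p(y_0)=p(h\cdot y_0)$. Using transitivity of the $H$-action on $Y=H/H'$ to write every $y\in Y$ as $h\cdot y_0$ for some $h$, this upgrades to $p(h\cdot y)=h\cdot p(y)$ for every $h\in H$ and $y\in Y$, which is exactly the $H$-intertwining property of $p$.

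For the second assertion, $H$-equivariance determines $p$ by its value at the basepoint: I set $y_0 G':=p(eH')$, so that $p(hH')=h\cdot y_0 G'$. In order for this formula to be well defined on the quotient $H/H'$, the value at $h'H'=eH'$ obtained for an arbitrary $h'\in H'$ must coincide with $y_0 G'$, i.e.\ $h'\cdot y_0 G'=y_0 G'$ for all $h'\in H'$, which is precisely $H' y_0 G'\subseteq y_0 G'$.

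The only slightly delicate point is the first step: translating the $H$-equivariance of $T$, which involves the bundle cocycles coming from the $H$-structures on $\calV$ and $\calW$, into an honest $H$-invariance of $\supp K_T\subseteq X\times Y$. Because the cocycles act invertibly on fibres, they do not affect the support, so this reduces to routine bookkeeping; the rest of the argument is a direct manipulation on the homogeneous spaces $X=G/G'$ and $Y=H/H'$.
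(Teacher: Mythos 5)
Your argument is correct and is essentially the paper's own proof: both rest on the two facts that $\supp K_T\subseteq\Delta(Y)$ (from the preceding proposition) and that $\supp K_T$ is $H$-stable (from $H$-equivariance of $T$ and of the bundles). The only cosmetic difference is that the paper first upgrades to the stronger claim $\supp K_T=\Delta(Y)$ (using homogeneity of $Y$) and then reads off $h\cdot p(y)=p(h\cdot y)$ for all $y$, whereas you extract equivariance from a single support point and then propagate it by transitivity; your version is marginally leaner since it never needs the full equality of supports. The treatment of well-definedness of $p$ and the condition $H'y_0G'\subseteq y_0G'$ is the same in both.
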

\begin{proof}
Let $T$ be a non zero $p$-differential $H$-intertwining operator. The support of $K_T$ is then $H$-invariant and thus $\supp(K_T)=\Delta(Y)$ since $Y$ is homogeneous. Then $p$ satisfies $p(h\cdot y)=h\cdot p(y)$ for all $y\in Y$. Indeed, $h\cdot (p(y),y)=(h\cdot p(y),h\cdot y)=(p(h\cdot y),h\cdot y)$. Finally, $p$ is uniquely determined by the image $p(eH')=y_0G'$ for some $y_0\in G'$, and it has to satisfy $H'y_0G'\subset y_0G'$ for $p$ to be well defined. 
\end{proof}
\begin{remark}
The converse of the first statement in Corollary \ref{cor:p-differential} is true if there exists a non zero $H$-intertwining map $F$ between the fiber of $\calV$ and the fiber of $\calW$. If this is the case, the map $f\mapsto F\circ f\circ p$ is a non zero $p$-differential $H$-intertwining operator. 
\end{remark}
Assume now that $T$ is a $H$-intertwining operator then there exists a distribution kernel $\tilde{K}_T\in \calD'(X)$ such that
\[
Tf(y) =\langle \tilde{K}_T, h\cdot f\rangle=\langle \tilde{K}_T,f(h^{-1}\cdot)\rangle , 
\]
with $y=h^{-1}H'$. Thus we have $\tilde{K}_T=K_T(eH')$.
\begin{corollary}
Consider the homogeneous spaces $X=G/G' $ and  $Y=H/H'$ and let $T$ be a  non zero $H$-intertwining $p$-differential operator. Then its support has only one point, more precisely $\supp(\tilde{K}_T)={y_0G'}$ where $y_0$ satisfies $H'y_0G'\subset y_0G'$. 
\end{corollary}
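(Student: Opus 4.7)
The plan is to combine the preceding corollary with the support characterization from the proposition. The argument is essentially a bookkeeping exercise, so I would keep it brief.

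First, I would invoke Corollary \ref{cor:p-differential}: since $T$ is a non-zero $p$-differential $H$-intertwining operator between the homogeneous spaces $X = G/G'$ and $Y = H/H'$, the map $p$ is $H$-intertwining, and is completely determined by $p(eH') = y_0 G'$ for some $y_0 \in G$ satisfying $H' y_0 G' \subset y_0 G'$. This identifies the candidate point for the support.

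Next, I would use the proposition characterizing $p$-differential operators by their kernel: we have $\supp(K_T) \subset \Delta(Y) = \{(p(y),y) : y \in Y\}$. Since $T$ is $H$-intertwining and non-zero, $\supp(K_T)$ is a non-empty $H$-invariant subset of $X \times Y$ under the diagonal action. The diagonal $H$-action on $\Delta(Y)$ is transitive because $Y = H/H'$ is $H$-homogeneous and $p$ is $H$-equivariant, so in fact $\supp(K_T) = \Delta(Y)$.

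Finally, I would unwind the relation $\tilde{K}_T = K_T(\,\cdot\,, eH')$ coming from the formula $Tf(y) = \langle \tilde{K}_T, f(h^{-1}\cdot)\rangle$ with $y = h^{-1}H'$. A point $x \in X$ lies in $\supp(\tilde{K}_T)$ precisely when $(x, eH') \in \supp(K_T) = \Delta(Y)$, which forces $x = p(eH') = y_0 G'$. Thus $\supp(\tilde{K}_T) = \{y_0 G'\}$, as claimed. The only subtle point to be careful about is the passage from $K_T$ on $X \times Y$ to $\tilde{K}_T$ on $X$, but this is already built into the definitions, so no real obstacle arises.
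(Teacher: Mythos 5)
Your proof is correct and follows essentially the same route as the paper's: identify $p(eH') = y_0 G'$ via Corollary \ref{cor:p-differential}, use $\supp(K_T) \subseteq \Delta(Y)$, and evaluate at $eH'$ in the second variable. The only cosmetic difference is that you re-derive $\supp(K_T) = \Delta(Y)$ from $H$-invariance and homogeneity, which was already established inside the proof of Corollary \ref{cor:p-differential}, so that step could simply be cited.
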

\begin{proof}
Indeed, $y\in \supp (\tilde{K}_T)$ if and only if $(y,eH')\in \supp(K_T)$. Hence $y=p(eH')=y_0G'$.
\end{proof}
We now focus on our pair of groups of interest $(G,H)=(\GL_{n+1}(\R),\GL_n(\R))$ where $H$ is embedded in $G$ via the map
$$ h\mapsto \begin{pmatrix}
    h&0\\ 0&1
\end{pmatrix}.$$
We also consider the associated homogeneous spaces $X=G/P_G$ and $Y=H/P_H$. 

\begin{proposition} \label{prop:One-point supports}
The cosets $xP_G \in G/P_G$ such that $P_H x P_G\subset x P_G$ are of the form $x_kP_G$ where
\[
x_k=\begin{pmatrix}
    I_{k}&0& 0\\
    0& 0& I_{n-k}\\
    0&1&0
\end{pmatrix},\text{ for } 0\leq k \leq n.
\]
\end{proposition}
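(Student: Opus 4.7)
The plan is to translate the condition $P_H x P_G \subseteq xP_G$ into the statement that the coset $xP_G \in G/P_G$ is fixed by the left action of $P_H$, and then classify the $P_H$-fixed points on the flag variety. Identifying $G/P_G$ with the variety of complete flags $0 = V_0 \subset V_1 \subset \dots \subset V_{n+1} = \R^{n+1}$ via $gP_G \leftrightarrow (gV_1^{\textup{std}} \subset \dots \subset gV_{n+1}^{\textup{std}})$ with $V_i^{\textup{std}}=\mathrm{span}(e_1,\dots,e_i)$, the question becomes: which complete flags have every term $P_H$-invariant?

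The first step is to classify the $P_H$-invariant subspaces of $\R^{n+1}$. Under the chosen embedding $H$ fixes $e_{n+1}$ and acts as $\GL_n$ on $\mathrm{span}(e_1,\dots,e_n)$, with $P_H$ restricting to the standard upper-triangular Borel of $\GL_n$. If $V \subseteq \R^n$ is $P_H$-invariant, the classical fact that the only Borel-stable subspaces are the standard flag terms gives $V = \mathrm{span}(e_1,\dots,e_j)$. If $V \not\subseteq \R^n$, pick $v=w+e_{n+1}\in V$ with $w\in\R^n$; for the torus element $a=\diag(1,\dots,t,\dots,1) \in A_H$ with $t$ in position $j$, the identity $av-v=(t-1)w_je_j \in V$ forces $e_j\in V$ whenever $w_j\neq 0$ (by varying $t$). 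Hence $w \in V$, and therefore $e_{n+1}\in V$, so $V\cap\R^n$ is itself $P_H$-invariant and the previous case yields $V=\mathrm{span}(e_1,\dots,e_j,e_{n+1})$. In summary, the only $P_H$-invariant subspaces of dimension $i$ in $\R^{n+1}$ are $\mathrm{span}(e_1,\dots,e_i)$ (if $i\leq n$) and $\mathrm{span}(e_1,\dots,e_{i-1},e_{n+1})$ (if $i\geq 1$).

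Assembling these one-dimensional choices into a complete flag, once $e_{n+1}$ enters at some step $k+1$ all later terms are forced, so the $P_H$-invariant complete flags are exactly the flags $\mathcal{F}^{(k)}$, $k\in\{0,\dots,n\}$, with
\[
\mathcal{F}^{(k)}_i = \begin{cases} \mathrm{span}(e_1,\dots,e_i) & \textup{if } i \leq k, \\ \mathrm{span}(e_1,\dots,e_{i-1},e_{n+1}) & \textup{if } i > k. \end{cases}
\]
A direct computation gives $x_k e_j = e_j$ for $j\leq k$, $x_k e_{k+1}=e_{n+1}$, and $x_k e_{k+1+j}=e_{k+j}$ for $1\leq j\leq n-k$, so $x_k \cdot \mathcal{F}^{\textup{std}} = \mathcal{F}^{(k)}$. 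This simultaneously verifies that each $x_k P_G$ satisfies $P_H x_k P_G\subseteq x_k P_G$ and that these exhaust all such cosets.

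The main obstacle is the second case of the invariant-subspace classification: ruling out $P_H$-invariant subspaces not contained in $\R^n$ other than those listed. The argument above needs only the diagonal torus of $P_H$, which already suffices; the unipotent part is used only for the standard Case 1 via the classical Borel-flag fact.
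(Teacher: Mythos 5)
Your proof is correct and follows exactly the route the paper indicates (identify $xP_G$ with a complete flag and classify the $P_H$-invariant ones); the paper only gives this as a one-line hint, and you have supplied the full details — the classification of $P_H$-stable subspaces, their assembly into flags, and the verification that the $x_k$ realize precisely these flags.
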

\begin{proof}
This is seen by considering $xP_G$ as a complete flag which should be invariant under the action of $P_H$. 
\end{proof}
Realizing $\pi_{\xi,\lambda}$ as smooth sections of the line-bundle $\calV_{\xi,\lambda}:=G\times_{P_G}(\xi\otimes e^\lambda\otimes 1)$ and $\tau_{\eta,\nu}$ as smooth sections of the line-bundle $\calW_{\eta,\nu}=H\times_{P_H}(\eta\otimes e^\nu\otimes 1)$ we can talk about $H$-intertwining $p$-differential operators between $\calC^\infty(G/P_G,\calV_{\xi,\lambda})$ and $\calC^\infty(H/P_H,\calW_{\eta,\nu})$. By Proposition \ref{prop:One-point supports} and Corollary \ref{cor:p-differential} there are exactly $n+1$ of such $p$'s. We denote these maps by $p_k$ where $p_k$ corresponds to the point $x_k$. 

\begin{definition}
We denote by $\operatorname{DSBO}_k(\pi_{\xi,\lambda},\tau_{\eta,\nu})$ the space of all $p_k$-differential symmetry breaking operators.
\end{definition}

\begin{example} \label{ex:restrictionMap}
 The restriction maps
 \[
 \rest_k: \pi_{\xi,\lambda}\to\tau_{\eta_k(\xi),\nu_k(\lambda)},\qquad  f\mapsto (h\mapsto f(hx_k))
 \]
are differential symmetry breaking operators i.e. they intertwine the $H$-action where $\nu_k(\lambda):=(\lambda_1+\frac{1}{2},\cdots,\lambda_{k}+\frac{1}{2},\lambda_{k+2}-\frac{1}{2},\cdots,\lambda_{n+1}-\frac{1}{2})$ and $\eta_k(\xi):=(\xi_1,\cdots,\xi_{k},\xi_{k+2},\cdots, \xi_{n+1})$. Notice that for $k=n$ then $x_n=I_{n+1}$ and so $\rest_n$ is the restriction to $H$ in the usual sense. It is clear that $\rest_k$ intertwines the $H$-action so we have to check that it goes between the right function spaces. We have
\[ 
f(hm_Ha_Hn_Hx_k)=f(hx_kx_k^{-1}m_Ha_Hx_k x_k^{-1}n_Hx_k)=(x_k^{-1}m_Ha_Hx_k)^{-\lambda-\rho_G}f(hx_k).
\]
Indeed, one can check for 
$$
n_H=\begin{pmatrix}
    N_k&A&0\\0&N_{n-k}&0\\0&0&1
\end{pmatrix}\in N_H\qquad (N_k\in \RR^{k\times k}, N_{n-k}\in \RR^{(n-k)\times (n-k)}),
$$
that 
$$
x_k^{-1}n_Hx_k=\begin{pmatrix}
    N_k&0&A\\
    0&1&0\\
    0&0&N_{n-k}
\end{pmatrix}\in N_G.
$$
Finally, one can check that the parameters fit, giving us that the maps $\rest_k$ are in $\DSBO_k(\pi_{\xi,\lambda},\tau_{\eta_k(\xi),\allowbreak \nu_k(\lambda)})$.
\end{example}
\begin{remark}
For $\lambda=-\rho_G$, we have 
\[
\rest_k\in\operatorname{Hom}_H(\pi_{\xi,\lambda},\tau_{\eta_k(\xi),\nu_k(\lambda)}),\ \text{ for all } 0\leq k\leq n.
\]
This shows that even though for generic parameters $(\xi,\lambda,\eta,\nu)$ the multiplicity $\dim \Hom_H(\pi_{\xi,\lambda},\tau_{\eta,\nu})$ is at most one it can easily be more than one for non-generic parameters. Here it is at least $n+1$.
\end{remark}
\begin{remark}
The maps $\rest_k$ are equivalent to the restriction map for another embedding of $\GL_n(\RR)$ into $ \GL_{n+1}(\R)$. More precisely, consider the following embeddings 
$$
\iota_k:H\rightarrow G,\quad h\mapsto x_k^{-1}\begin{pmatrix}
        h&0\\0&1
    \end{pmatrix}x_k. 
$$
Notice that our choice of embedding for $H$  corresponds to $k=n$. Then the map $\Theta_k: f\mapsto f(x_k\cdot)$ intertwines the action of $H$ corresponding to the different embedding, i.e.
    \[
    \Theta_k\left(\iota_n(h)\cdot f\right)=\iota_k(h)\cdot\Theta_k f. 
    \] 
To conclude this remark, notice we have $\rest_k=R_k\circ~ \Theta_k$ where $R_k$ denotes the restriction to $\iota_k(H)$. 
    
\end{remark}

\section{Differential symmetry breaking operators using the source operator}
In this section we find an explicit family of differential symmetry breaking operators by conjugating multiplication operators by standard intertwining operators.
\subsection{Intertwining multiplication operators}
In this subsection we want to find polynomials $q$ such that the multiplication map $M_q:f\mapsto qf$ is $H$-intertwining between the principal series representations $\pi_{\xi,\lambda}$ of $G$. 
\begin{lemma}\label{lemma:multiplicationIntertwining}
If a polynomial $q$ on $G$ satisfies 
\begin{enumerate}
    \item[(P1)] There exists $\varepsilon\in \widehat{M}_G$ and $\mu \in\fraka_G^*$ such that $q(gman)=a^{\mu}\varepsilon(m)^{-1}q(g)$ for $man\in M_GA_GN_G$ i.e. $q\in \pi_{\varepsilon,-\mu-\rho_G}$,
    \item[(P2)]  There exists a character $\chi$ of $H$ such that $q(hg)=\chi(h)q(g)$ for all $h\in H$,
\end{enumerate}
then
\[
M_q\in \Hom_H(\pi_{\xi,\lambda},\pi_{\xi+\varepsilon,\lambda-\mu}\otimes \chi).
\]
\end{lemma}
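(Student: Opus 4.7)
The plan is to verify directly the two ingredients of an $H$-intertwining map: first that $M_q$ sends the representation space of $\pi_{\xi,\lambda}$ into that of $\pi_{\xi+\varepsilon,\lambda-\mu}$, and second that it intertwines the left regular $H$-actions up to the character $\chi$. Both reduce to elementary bookkeeping using (P1) and (P2), with no serious obstacle.

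For the first point, take $f\in\pi_{\xi,\lambda}$ and compute $(qf)(gman)$ for $man\in M_GA_GN_G$. Using (P1) on $q$ and the defining transformation law of $\pi_{\xi,\lambda}$ on $f$, the $A_G$-exponents combine as $a^{\mu}\cdot a^{-\lambda-\rho_G}=a^{-(\lambda-\mu)-\rho_G}$ and the $M_G$-characters combine as $\varepsilon(m)^{-1}\xi(m)^{-1}=(\xi+\varepsilon)(m)^{-1}$, while $N_G$ acts trivially on both factors. Hence $qf$ satisfies the equivariance property of $\pi_{\xi+\varepsilon,\lambda-\mu}$, showing that $M_q$ is a well-defined linear map between the two representation spaces. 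Smoothness is automatic since $q$ is polynomial.

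For the second point, recall that $\pi(h)f=f(h^{-1}\cdot)$ for $h\in H$, and interpret the $\chi$-twist $\pi_{\xi+\varepsilon,\lambda-\mu}\otimes\chi$ as the $H$-action $h\mapsto \chi(h)\pi_{\xi+\varepsilon,\lambda-\mu}(h)$ on the same space. Property (P2) gives $q(h^{-1}g)=\chi(h)^{-1}q(g)$. Therefore, for $h\in H$ and $g\in G$,
\[
\bigl(\pi_{\xi+\varepsilon,\lambda-\mu}(h)M_qf\bigr)(g)=q(h^{-1}g)f(h^{-1}g)=\chi(h)^{-1}q(g)f(h^{-1}g)=\chi(h)^{-1}\bigl(M_q\pi_{\xi,\lambda}(h)f\bigr)(g),
\]
which rearranges to $M_q\circ\pi_{\xi,\lambda}(h)=\chi(h)\pi_{\xi+\varepsilon,\lambda-\mu}(h)\circ M_q$, the intertwining relation for the target $\pi_{\xi+\varepsilon,\lambda-\mu}\otimes\chi$.

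Together these two verifications give $M_q\in\Hom_H(\pi_{\xi,\lambda},\pi_{\xi+\varepsilon,\lambda-\mu}\otimes\chi)$. The statement is really a compatibility check, so the only thing to be careful about is keeping the modular half-sum $\rho_G$ consistent across the two principal series and making sure the sign of $\chi$ in (P2) is tracked correctly through $h\mapsto h^{-1}$; there is no harder step.
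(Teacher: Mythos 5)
Your proof is correct and is exactly the (spelled-out version of the) argument the paper gives in its one-line proof: condition (P1) shows $M_q$ lands in the right function space via the combined $P_G$-equivariance, and condition (P2) shows the left-regular $H$-actions intertwine up to the character $\chi$. No substantive difference.
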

\begin{proof}
The first conditions ensures we land in the right space and the second conditions ensures that the map is intertwining.
\end{proof}
Consider the polynomials 
\[
\Phi(g)=g_{n+1,1},\qquad \Psi(g)=\det(g_{ij})_{1\leq i,j\leq n} \qquad (g\in G),
\]
which satisfy \emph{(P1)} and \emph{(P2)} as
\begin{equation*}\label{eq:Pequivariance}
\Phi(gman)=m_1a_1\Phi(g),\qquad \Psi(gman)=m_1a_1\cdots m_na_n\Psi(g)\qquad (man=\diag(m_ia_i)n\in P_G),
\end{equation*}
\begin{equation*}\label{eq:Hequivariance}
\Phi(hg)=\Phi(g),\qquad \Psi(hg)=\det(h)\Psi(g)\qquad (h\in H).
\end{equation*}
So by Lemma \ref{lemma:multiplicationIntertwining} the multiplication maps are intertwining
\[
M_\Phi\in \Hom_H(\pi_{\xi,\lambda},\pi_{\xi-e_1,\lambda-e_1}) 
,\qquad M_\Psi\in \Hom_H(\pi_{\xi,\lambda},\pi_{\xi-\hat{e}_{n+1},\lambda-\hat{e}_{n+1}}\otimes \det|_H),
\]
where $\hat{e}_j=\sum_{k\neq j}e_k$. The following proposition tells us that $\Phi$ and $\Psi$ together with the determinant generates all polynomials that satisfies these two properties.
\begin{proposition}
The only non-zero polynomials that satisfies properties \emph{(P1)} and \emph{(P2)} are, up to a constant, of the form
\[
\Phi^i\Psi^j \operatorname{det}^k\qquad (i,j,k\in \NN).
\]
\end{proposition}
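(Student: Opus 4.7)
The plan is representation-theoretic: realize the space of polynomial solutions to (P1) as an irreducible $G$-module under the left regular action, and then apply the classical $\GL_{n+1}\downarrow\GL_n$ branching rule to cut out the additional condition (P2).

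First, I would observe that for any non-zero polynomial $q$ satisfying (P1) one must have $\mu_j\in\NN$ and $\varepsilon_j\equiv \mu_j\pmod 2$; this is seen by testing (P1) against $g\mapsto g\cdot\diag(t_1,\dots,t_{n+1})$ with $t_j\in\RR^\times$ and using that $q$ is polynomial in the entries of $g$, so $\varepsilon$ is in fact determined by $\mu$. Property (P1) says precisely that $q$ is right-$N_G$-invariant and has right $A_G$-weight $\mu$, and the Peter--Weyl decomposition of $\CC[\GL_{n+1}]$ then shows that the space of such $q$ is non-zero iff $\mu$ is a partition ($\mu_1\geq\dots\geq\mu_{n+1}\geq 0$), in which case it is, under the left regular action of $G$, irreducible and isomorphic to $V_\mu^*$, the dual of the polynomial irreducible representation of highest weight $\mu$.

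Next, any polynomial character of $H=\GL_n(\RR)$ is of the form $\det^\ell$ for some $\ell\in\NN$, so (P2) selects the $H$-isotypic component of $V_\mu^*$ corresponding to a one-dimensional $H$-representation. Dualising, its dimension equals the multiplicity of $V_{(\ell,\dots,\ell)}^H=\det^\ell$ in $V_\mu\big|_H$. The classical $\GL_{n+1}\downarrow\GL_n$ branching rule says
\[
V_\mu\big|_H=\bigoplus_\nu V_\nu^H,
\]
where $\nu$ runs over partitions interlaced with $\mu$, namely $\mu_1\geq\nu_1\geq\mu_2\geq\nu_2\geq\dots\geq\mu_n\geq\nu_n\geq\mu_{n+1}$, each appearing with multiplicity one. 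Forcing $\nu=(\ell,\dots,\ell)$ imposes $\mu_2=\dots=\mu_n=\ell$ and $\mu_1\geq\ell\geq\mu_{n+1}$; otherwise the space of $q$'s satisfying (P1) and (P2) is zero, and in the multiplicity-one case it is exactly one-dimensional.

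Under these constraints, setting $i=\mu_1-\ell\geq 0$, $j=\ell-\mu_{n+1}\geq 0$, $k=\mu_{n+1}\geq 0$, the equivariance properties of $\Phi$, $\Psi$ and $\det$ already recorded in the paper immediately give that $\Phi^i\Psi^j\det^k$ satisfies (P1) with weight $\mu=(i+j+k,\, j+k,\dots,\, j+k,\, k)$ and (P2) with character $\det^{j+k}=\det^\ell$. By the multiplicity-one conclusion above, every non-zero $q$ satisfying (P1) and (P2) is then a scalar multiple of $\Phi^i\Psi^j\det^k$. The main technical point is the Borel--Weil identification in Step 1 for the real form, which I would handle by complexifying and observing that any polynomial on $\GL_{n+1}(\RR)$ in the matrix entries extends uniquely to a polynomial on $M_{n+1}(\CC)$, so the identification is the standard algebraic one. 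A more elementary but combinatorially heavier alternative would be to expand $q$ via the First Fundamental Theorem of invariant theory in terms of left-justified minors $\Delta_I^{[k]}$ and impose the left $H$-equivariance by hand.
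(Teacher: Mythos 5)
Your argument is correct, but it takes a genuinely different route from the paper. The paper's proof is entirely hands-on: it uses the left $\overline{N}_H$-invariance, both-sided $N_H$- and $M_HA_H$-equivariance, and polynomiality to show that on the Bruhat cell $\overline{N}_GP_G$ the function $q$ restricted to $\left(\begin{smallmatrix}I_n & 0\\ y^\intercal & 1\end{smallmatrix}\right)$ depends only on $y_1$ and is homogeneous of a forced degree, and then reads off that $q$ agrees with a product of powers of $\Phi$, $\Psi$, $\det$ on a dense set. Your approach instead identifies the space of (P1)-solutions for fixed $\mu$ with $V_\mu^*$ via algebraic Peter--Weyl on $\CC[M_{n+1}]$ (after complexifying, which is legitimate since $\GL_{n+1}(\RR)$ is Zariski-dense in $M_{n+1}(\CC)$), and then reduces (P2) to the multiplicity of $\det^\ell$ in $V_\mu|_{\GL_n}$, which the classical interlacing branching rule computes to be $0$ or $1$ and pins down $\mu_2=\dots=\mu_n$. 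The dualisation step you use ($\chi^{-1}$ in $V_\mu^*$ versus $\chi$ in $V_\mu$) is handled correctly, and your change of variables $i=\mu_1-\ell$, $j=\ell-\mu_{n+1}$, $k=\mu_{n+1}$ matches the weights of $\Phi^i\Psi^j\det^k$. The trade-off: your proof gives a conceptual explanation (multiplicity-one branching) for why the answer is a single monomial and why $\mu$ must have the very constrained shape $(\mu_1,\ell,\dots,\ell,\mu_{n+1})$, whereas the paper's argument is self-contained, stays over $\RR$, and in fact handles the slightly more general setting where $q$ is a priori only $P_G$-finite on the right (not a priori with a rational character), obtaining the classification of the exponents directly from the requirement that the answer be polynomial. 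Both are valid; yours is shorter for a reader who already has Borel--Weil and GL branching in hand.
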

\begin{proof}
First we note that all characters of $H$ are of the form $\chi=\det^r$ for some integer $r$. Consider $q$ a non-zero polynomial satisfying \emph{(P1)} and \emph{(P2)} then using the $H$-equvariance from the left we get
\[
q\begin{pmatrix}
    \overline{n}_H &\\
    y^\intercal & 1
\end{pmatrix}=\det(\overline{n}_H)^r q\begin{pmatrix}
    I_n & \\
    y^\intercal & 1
\end{pmatrix}=q\begin{pmatrix}
    I_n & \\
    y^\intercal & 1
\end{pmatrix},\qquad (\overline{n}_H\in \overline{N}_H, \,y \in \RR^n).
\]
Furthermore, using the $N_H$-invariance from both sides we have
\[
q\begin{pmatrix}
    I_n & \\
    y^\intercal & 1
\end{pmatrix}=q\bigg(\begin{pmatrix}
    n_H^{-1} & \\
    & 1
\end{pmatrix}\begin{pmatrix}
    I_n & \\
    y^\intercal & 1
\end{pmatrix}\begin{pmatrix}
    n_H & \\
    & 1
\end{pmatrix}\bigg)=q\begin{pmatrix}
    I_n & \\
    y^\intercal n_H & 1
\end{pmatrix},\qquad (n_H\in N_H,\, y\in \RR^n),
\]
which implies that $q$ does not depend on all of $y$ but only on $y_1$. We also see, using the $M_HA_H$-equivariance from both sides, that 
\[
|\ell_1|^{r-\mu_1}_{r+\epsilon_1}\cdots |\ell_n|^{r-\mu_n}_{r+\varepsilon_n}q\begin{pmatrix}
    I_n &\\
    y_1e_1^T & 1
\end{pmatrix}=q\bigg(\begin{pmatrix}
    \ell & \\
    & 1
\end{pmatrix}\begin{pmatrix}
    I_n &\\
    y_1e_1^\intercal & 1
\end{pmatrix}\begin{pmatrix}
    \ell^{-1} & \\
    & 1
\end{pmatrix}\bigg)=q\begin{pmatrix}
    I_n &\\
    \ell_1^{-1}y_1e_1^\intercal & 1
\end{pmatrix},
\]
for all $\ell=\diag(\ell_1,\dots,\ell_n)\in M_HA_H$, which implies $\mu_2=\cdots=\mu_n=r$ and $\varepsilon_2=\cdots=\varepsilon_n=[r]$  but also that 
\[
q\begin{pmatrix}
    I_n &\\
    y_1e_1^\intercal & 1
\end{pmatrix}=\operatorname{const}\times |y_1|^{r-\mu_1}_{\varepsilon_1+[r]},
\]
as it is homogeneous in $y_1$. On the dense subset $\overline{N}_GP_G$ of $G$ we then see that $q$ coincides with $|\Phi|^{r-\mu_1}_{[r]+\varepsilon_1}\allowbreak|\Psi|^{r-\mu_{n+1}}_{[r]+\varepsilon_{n+1}}\allowbreak|\det|^{\mu_{n+1}}_{\varepsilon_{n+1}}$ so by continuity they coincide on all of $G$. By requiring that this is a polynomial the result follows.
%Now decomposing $g=\overline{n}man\in \overline{N}_GM_GA_GN_G$ we get that the $i$-th entry of $ma$ is $\sigma_i(g)/\sigma_{i-1}(g)$ where $\sigma_i$ is the $i$-th principal minor and also that $\overline{n}_{n+1,1}=g_{n+1,1}/\sigma_1(g)$. This gives us that %pandekage appendix
%\begin{align*}
   % p(g)&=\operatorname{const}\times |\sigma_1(g)|^{\mu_1}_{\varepsilon_1}\Big(\frac{\sigma_2(g)}{\sigma_1(g)}\frac{\sigma_3(g)}{\sigma_2(g)}\cdots\frac{\sigma_n(g)}{\sigma_{n-1}(g)}\Big)^k\Big |\frac{\sigma_{n+1}(g)}{\sigma_n(g)}\Big|_{\varepsilon_{n+1}}^{\mu_{n+1}}|\overline{n}_{n+1,1}|^{k-\mu_1}_{k+\varepsilon_1}\\
    %&=\operatorname{const}\times |\sigma_n(g)|^{k-\mu_{n+1}}_{k+\varepsilon_{n+1}}|\sigma_{n+1}(g)|^{\mu_{n+1}}_{\varepsilon_{n+1}}|g_{n+1,1}|^{k-\mu_1}_{k+\varepsilon_1},
%\end{align*}
%and since $\sigma_n(g)=\Psi(g)$, $\sigma_{n+1}(g)=\det(g)$ and $\Phi(g)=g_{n+1,1}$
 %the result follows by requiring that each factor is polynomial.
 \end{proof}
\subsection{The source operators}\label{subsec:SourceOperator}
We now wish to essentially "conjugate" the multiplication operators $M_\Phi$ and $M_\Psi$ by Knapp--Stein intertwining operators in the following way (we omit $\xi$ for simplicity)
\[\begin{tikzcd}
	{\pi_{\lambda}} & {\pi_{\lambda-w^{-1}e_1}} \\
	{\pi_{w\lambda}} & {\pi_{w\lambda-e_1} }
	\arrow["{T^w_{\lambda}}"', from=1-1, to=2-1]
	\arrow["{M_\Phi}"', from=2-1, to=2-2]
	\arrow["{T^{w^{-1}}_{w\lambda-e_1}}"', from=2-2, to=1-2]
	\arrow[dashed, from=1-1, to=1-2]
\end{tikzcd}\]
All the maps in this diagram are intertwining the $H$-action so the composition will also intertwine the $H$-action.
By using the transposition $w=(1\;i)$ in this diagram we can create maps $\pi_\lambda\to\pi_{\lambda-e_i}$ and by replacing $M_\Phi$ by $M_\Psi$ we can get maps $\pi_\lambda\to \pi_{\lambda-\hat{e}_i}$. 
\begin{remark}
Applying $M_{\det}$ does not yield any meaningful results, as it is equivalent to tensoring with the determinant, which simply shifts all the parameters $\lambda$ by one. Moreover, “conjugating” by the Knapp–Stein operators has no effect, since $M_{\det}$ and $T^w_{\xi, \lambda}$ commute. Therefore, we disregard the multiplication map by the determinant.
\end{remark}

Let $w_i$ denote the simple transposition $(i\;i+1)$ then let $\Phi_1=\Phi$ and $\Psi_{n+1}=\Psi$ then define
\[
\Phi_{i+1}(g)=\Phi_i(gw_i),\qquad \Psi_{i}(g)=\Psi_{i+1}(gw_i).
\]
Let $\lambda_{i,j}=\lambda_i-\lambda_j-1$ and consider the right regular action $r$ of $G$ on $C^\infty(G)$ which has infinitesimal action on the standard basis $E_{i,j}$ given by the differential operators
\begin{equation}\label{eq:DefVarepsilon}
\varepsilon^{i,j}:=dr(E_{i,j})f(g)= \sum_{k=1}^{n+1}g_{ki}\partial_{kj}f(g).
\end{equation}
Let $\tilde{\varepsilon}^{i,j}=(-1)^{i+j+1}\varepsilon^{i,j}$ then we have the following result
\begin{theorem}
Let $\calD_1=M_\Phi$ and $\calF_{n+1}=M_\Psi$. The following $H$-intertwining operators are differential for generic $(\xi,\lambda)$: 
\begin{align*}
\calD_{i+1}(\lambda)&=\lambda_{i+1,i}T_{w_i(\xi,\lambda)+(e_i,e_i)}^{w_i}\circ \calD_{i}(w_i\lambda)\circ \Big(T^{w_i}_{w_i(\xi,\lambda)}\Big)^{-1},\qquad &(i=1,\dots,n),\\
\calF_i(\lambda)&=\lambda_{i+1,i}T_{w_i(\xi,\lambda)+(e_{i+1},e_{i+1})}^{w_i}\circ \calF_{i+1}(w_i\lambda)\circ \Big(T^{w_i}_{w_i(\xi,\lambda)}\Big)^{-1},\qquad &(i=1,\dots,n),
\end{align*}
and explicitly given by 
\begin{equation}\label{eq:calD}
\calD_{i+1}(\lambda)=\det\begin{pmatrix}
    \Phi_1 & \lambda_{i+1,1} &0 & \cdots & 0\\
    \Phi_2 & \varepsilon^{2,1} & \lambda_{i+1,2} & \cdots &0\\
    \vdots & \vdots & \vdots & \ddots & \vdots \\
    \Phi_{i}& \varepsilon^{i,1} & \varepsilon^{i,2}& \cdots &\lambda_{i+1,i}\\
    \Phi_{i+1} & \varepsilon^{i+1,1} & \varepsilon^{i+1,2}&\cdots & \varepsilon^{i+1,i}
\end{pmatrix},
\end{equation}
\begin{equation}\label{eq:calF}
\calF_i(\lambda)=\det\begin{pmatrix}
    \Psi_{n+1} & \lambda_{n+1,i} & 0 &\cdots &0\\
    \Psi_n & \tilde{\varepsilon}^{n+1,n} & \lambda_{n,i} & \cdots  & 0\\
    \vdots & \vdots & \vdots & \ddots & \vdots\\
    \Psi_{i+1} & \tilde{\varepsilon}^{n+1,i+1} & \tilde{\varepsilon}^{n,i+1} &\cdots & \lambda_{i+1,i}\\
    \Psi_i & \tilde{\varepsilon}^{n+1,i} & \tilde{\varepsilon}^{n,i}& \cdots & \tilde{\varepsilon}^{i+1,i}
\end{pmatrix},
\end{equation}
where these determinants with non-commuting entries should be expanded such that in each term the factors comes in the same order as the columns. These operators then intertwine the following spaces: 
\[
\calD_{i}(\lambda)\in \operatorname{Hom}_H(\pi_{\xi,\lambda},\pi_{\xi-e_{i},\lambda-e_{i}})\qquad \text{and}\qquad \calF_i(\lambda)\in \operatorname{Hom}_H(\pi_{\xi,\lambda},\pi_{\xi-\hat{e}_i,\lambda-\hat{e}_i}\otimes \det|_H).
\]

\end{theorem}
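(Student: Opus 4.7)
My plan is to prove the explicit formulas, the differential nature, and the intertwining properties simultaneously by induction following the recursive definitions. For $\calD_{i+1}$ I induct upward on $i$, with base case $\calD_1 = M_\Phi$. This is a multiplication operator (hence a differential operator of order zero), it agrees with the $i=0$ case of \eqref{eq:calD}, and by Lemma \ref{lemma:multiplicationIntertwining} combined with the left-$H$-invariance of $\Phi$ and its right $(P_G,\xi,\lambda)$-equivariance, $M_\Phi$ is $H$-intertwining $\pi_{\xi,\lambda} \to \pi_{\xi - e_1,\lambda - e_1}$. For $\calF_i$ I use downward induction with base case $\calF_{n+1} = M_\Psi$ handled analogously.

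The intertwining property of $\calD_{i+1}(\lambda)$ at the inductive step is immediate: Knapp--Stein operators and their inverses are $G$-intertwining, hence $H$-intertwining, and $\calD_i(w_i\lambda)$ is $H$-intertwining by the inductive hypothesis. Tracking parameters through the three maps, using $w_i \cdot e_i = e_{i+1}$, yields the target $\pi_{\xi - e_{i+1},\lambda - e_{i+1}}$; the $\calF_i$ case is analogous with $w_i \cdot \hat{e}_{i+1} = \hat{e}_i$.

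The substantive content is showing that the recursive composition agrees with the explicit determinantal formula \eqref{eq:calD}, which is manifestly a differential operator. The factor $\lambda_{i+1,i}$ in the recursion is chosen precisely to cancel the Harish--Chandra $c$-function pole at $\lambda_i - \lambda_{i+1} = 1$, ensuring the operator is holomorphic in $\lambda$ at generic parameters. Using the explicit integral formula for $T^{w_i}_{\xi,\lambda}$ from Section \ref{sec:Knapp-Stein} together with the inverse relation \eqref{eq:InverseTony}, one would compute the conjugation $T^{w_i} \circ \calD_i(w_i\lambda) \circ (T^{w_i})^{-1}$ by integration by parts, converting multiplications by matrix entries of $g$ into the vector fields $\varepsilon^{k,\ell}$ from \eqref{eq:DefVarepsilon}. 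The resulting expression should then be identified with the cofactor expansion of \eqref{eq:calD} along its last row, which produces the combination $\Phi_{i+1}\prod_{k=1}^i \lambda_{i+1,k} + \sum_{k=1}^i (-1)^{i-k+1}\varepsilon^{i+1,k}\cdot(\text{minor}_k)$. This computation was carried out in detail by the first author in \cite{DitlevsenFrahm24}, so I would invoke those results rather than redo the bookkeeping.

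The main obstacle is precisely this last step: the composition of two non-local Knapp--Stein integral operators with a differential operator in the middle is \emph{a priori} only continuous with non-local distributional kernel, and the fact that it is a differential (local) operator is a non-trivial cancellation reflecting the determinantal structure of \eqref{eq:calD}. For $\calF_i$, the signs $\tilde{\varepsilon}^{k,\ell} = (-1)^{k+\ell+1}\varepsilon^{k,\ell}$ arise naturally from the alternating signs in the Laplace expansion of $\Psi$ as an $n \times n$ determinant, and the argument otherwise mirrors the $\calD_i$ case.
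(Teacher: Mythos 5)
Your proposal is correct and, at its core, takes the same route as the paper: the paper's entire proof is a citation of Proposition 5.2 of \cite{DitlevsenFrahm24}, and you likewise defer the substantive verification (that the recursive composition of Knapp--Stein operators with $M_\Phi$ or $M_\Psi$ collapses to a local differential operator given by the stated determinantal formula) to that reference. You add useful scaffolding that the paper omits -- the inductive framework, the observation that the intertwining property follows formally from the $G$-equivariance of the Knapp--Stein factors and the inductive hypothesis, and the parameter bookkeeping via $w_i\cdot e_i = e_{i+1}$ and $w_i\cdot\hat e_{i+1} = \hat e_i$ -- but none of this fills in the actual computation, just as the paper does not. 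One small caveat: your heuristic that the prefactor $\lambda_{i+1,i} = \lambda_{i+1}-\lambda_i-1$ cancels ``the $c$-function pole at $\lambda_i-\lambda_{i+1}=1$'' is not quite accurate, since $c_i$ has poles at integer differences of definite parity depending on $[\xi_i+\xi_{i+1}]$ rather than at $\pm 1$ generically; the correct reason the factor is needed comes out of the explicit residue/integration-by-parts computation in \cite{DitlevsenFrahm24}. This does not affect the validity of your overall structure, since you do not rely on the heuristic.
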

\begin{proof}
The proof for these formulas can be found in \cite[Proposition 5.2]{DitlevsenFrahm24}.
\end{proof}

We the context is clear we write $\calD_i$ and $\calF_j$ instead of $\calD_i(\lambda)$ and $\calF_j(\lambda)$. With this in mind we define $\calD_i^\alpha $ with $\alpha\in \NN$ as follows
\[
\calD_i^\alpha=\calD_i(\lambda -(\alpha-1) e_i)\circ \cdots \circ\calD_i(\lambda),
\]
and similarly for $\calF_j^\alpha$.
\begin{remark}
We notice that the definition using conjugation by Knapp--Stein intertwining operators is only valid for the $(\xi,\lambda)$ where the inverse of the Knapp--Stein operators exists, which they do for generic $(\xi,\lambda)$. However, the differential operator expressions are valid for any choice $\lambda$ and are also independent of $\xi$.
\end{remark}
In the following we let $k\in \{0,\dots,n\}$ denote the index of the embedding $p_k$ used to define $p_k$-differential symmetry breaking operators.
\begin{corollary}\label{cor:restVanish}
We have the following relations
\[
\rest_k\circ \calD_{k+1}=\lambda_{k+1,1}\cdots\lambda_{k+1,k},\qquad \rest_k\circ\calD_i=0,\qquad (i=1,\dots,k)
\]
and 
\[
\rest_k\circ \calF_{k+1}=\lambda_{n+1,k+1}\cdots\lambda_{k+2,k+1}\det|_H,\qquad \rest_k\circ \calF_j=0\qquad(j=k+2,\dots,n+1).
\]
\end{corollary}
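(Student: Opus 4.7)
The plan is to evaluate the determinantal expressions \eqref{eq:calD} and \eqref{eq:calF} at points $g=hx_k$ for $h\in H$ and to observe that the near-triangular sparsity of the matrices, combined with the special values of $\Phi_i$ and $\Psi_i$ at $hx_k$, collapses the signed permutation expansion to a single surviving term (or to zero).

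First I would verify the evaluation formulas for $\Phi_i$ and $\Psi_i$ at $hx_k$. Since $h\in H$ has last row $(0,\ldots,0,1)$, the last row of $hx_k$ coincides with the last row of $x_k$, which carries a single $1$ in position $k+1$; hence $\Phi_i(hx_k)=(hx_k)_{n+1,i}=\delta_{i,k+1}$. A direct computation of $hx_k$ shows that column $k+1$ of its top $n\times(n+1)$ block is zero while the remaining $n$ columns, in their natural order, reproduce the columns of the $\GL_n$-block of $h$. Using the recursion $\Psi_i(g)=\Psi_{i+1}(gw_i)$ to identify $\Psi_i(g)$ with the determinant of the top-$n$ block with column $i$ removed (columns kept in natural order), this yields $\Psi_i(hx_k)=0$ for $i\neq k+1$ and $\Psi_{k+1}(hx_k)=\det h$.

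Next I would expand each determinant as $\sum_\sigma \sgn(\sigma)\,a_{\sigma(1),1}\cdots a_{\sigma(N),N}$, with factors in column order, as prescribed after \eqref{eq:calF}. The first factor $a_{\sigma(1),1}$ is a multiplication operator (by $\Phi_{\sigma(1)}$ or $\Psi_{\sigma(1)}$), and at $g=hx_k$ it vanishes unless $\sigma(1)=k+1$. For $\calD_i$ with $i\leq k$, the first column contains only $\Phi_1,\ldots,\Phi_i$, none of which has index $k+1$; every term vanishes and $\rest_k\circ\calD_i=0$. The same observation applied to $\calF_j$ with $j\geq k+2$ gives $\rest_k\circ\calF_j=0$.

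For $\calD_{k+1}$, the surviving permutations must satisfy $\sigma(1)=k+1$ and $\sigma|_{\{2,\ldots,k+1\}}$ is a bijection onto $\{1,\ldots,k\}$. In every column $j\geq 2$ all entries strictly above row $j-1$ vanish, which forces $\sigma(j)\geq j-1$; combined with bijectivity this pins down $\sigma(j)=j-1$ for all $j\geq 2$, so $\sigma$ is the single cyclic permutation $(1,k+1,k,\ldots,2)$. The surviving factors $a_{j-1,j}=\lambda_{k+1,j-1}$ on the superdiagonal are all scalars, so their composition is the constant $\prod_{j=1}^k\lambda_{k+1,j}$; together with $\Phi_{k+1}(hx_k)=1$ this yields the first identity. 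The argument for $\calF_{k+1}$ is strictly analogous: the unique surviving permutation produces $\Psi_{k+1}(hx_k)\cdot\prod_{j=k+2}^{n+1}\lambda_{j,k+1}$, and $\Psi_{k+1}(hx_k)=\det h$ furnishes the $\det|_H$ factor on the right-hand side.

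The one point requiring care is the bookkeeping of the sign $\sgn(\sigma)$ for the unique surviving cycle (a $(k+1)$-cycle for $\calD_{k+1}$ and an $(n-k+1)$-cycle for $\calF_{k+1}$), which must be reconciled with the normalisation implicit in the stated identities. Conceptually, however, the entire argument boils down to the combinatorial observation that exactly one permutation survives, and I do not anticipate any further obstacle beyond this sign bookkeeping.
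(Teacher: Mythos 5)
Your approach is exactly the paper's: the published proof consists of the single observation that $\rest_k(\Phi_i)=\Phi_i(hx_k)$ and $\rest_k(\Psi_i)=\Psi_i(hx_k)$ are Kronecker deltas, leaving the expansion of the determinantal formulas \eqref{eq:calD}--\eqref{eq:calF} implicit; you carry out both steps. Your evaluation $\Phi_i(hx_k)=\delta_{i,k+1}$ and $\Psi_i(hx_k)=\delta_{i,k+1}\det(h)$ is correct (the paper's proof writes $\delta_{i+1,k}$, which is a typo for $\delta_{i,k+1}$, as one sees from $\Phi_1(hx_0)=1$). The sign point you flag is real: the unique surviving permutation is a $(k+1)$-cycle for $\calD_{k+1}$ and an $(n+1-k)$-cycle for $\calF_{k+1}$, contributing $(-1)^k$ and $(-1)^{n-k}$ respectively, which do not appear in the stated identities; the paper's own one-line proof does not address this either, and the omission is harmless because the corollary is only ever used to decide when $\rest_k\circ\calL_{\alpha,k}$ is nonzero, so only the non-vanishing of the scalar matters, not its sign.
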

\begin{proof}
It is easy to verify that 
\[
\rest_k(\Phi_i)(h)=\Phi_i(hx_k)=\delta_{i+1,k},\qquad \rest_k(\Psi_i)(h)= \Psi_i(hx_k)=\delta_{i+1,k}\det(h),
\]
where $\delta_{i,j}$ is Kronecker's delta.
\end{proof}
In view of Corollary \ref{cor:restVanish} for $\alpha\in \NN^n$  we consider the following source operators
\begin{equation}\label{eq:DefinitionL}
\calL_{\alpha,k}=\calF_1^{\alpha_1}\circ\cdots\circ \calF_k^{\alpha_k}\circ\calD_{k+2}^{\alpha_{k+1}}\circ\cdots\circ \calD_{n+1}^{\alpha_n},
\end{equation}
which are $H$-intertwining maps in
\[
 \Hom_H\left(\pi_{\xi,\lambda},\pi_{(\xi,\lambda)-\sum_{i=1}^p\alpha_i(\hat{e}_i,\hat{e}_i)-\sum_{i=k+1}^n\alpha_i(e_{i+1},e_{i+1})}\otimes \operatorname{det}^{\sum_{i=1}^k\alpha_i}|_H\right).
\]
If we compose with $\rest_k$ we end up in the representation $\tau_{\eta_{\alpha,k}(\xi),\nu_{\alpha,k}(\lambda)}$ where 
\[
\eta_{\alpha,k}(\xi)=(\xi_1+\alpha_1,\dots,\xi_k+\alpha_k,\xi_{k+2}+\alpha_{k+1},\dots,\xi_{n+1}+\alpha_n)=\eta_{k}(\xi)+[\alpha],
\]
and 
\begin{align*}
\nu_{\alpha,k}(\lambda)&=(\lambda_1+\tfrac{1}{2}+\alpha_1,\dots,\lambda_k+\tfrac{1}{2}+\alpha_k,\lambda_{k+2}-\tfrac{1}{2}-\alpha_{k+1},\dots,\lambda_{n+1}-\tfrac{1}{2}-\alpha_{n})\\
&=\nu_{k}(\lambda)+\sum_{i=1}^k\alpha_ie_i-\sum_{j=k+1}^n\alpha_je_j,
\end{align*}
where $\eta_k$ and $\nu_k$ are defined in Example \ref{ex:restrictionMap}.
This composition then defines a $p_k$-differential symmetry breaking operator
\[
\rest_k\circ\calL_{\alpha,k}\in \operatorname{DSBO}_k(\pi_{\xi,\lambda},\tau_{\eta_{\alpha,k}(\xi),\nu_{\alpha,k}(\lambda)}).
\]
This then raises two natural questions 
\begin{enumerate}
    \item For which parameters $\alpha$ and $(\xi, \lambda)$ does the composition $\rest_k \circ \calL_{\alpha,k}$ become zero?
    \item When the composition is non-zero, to what extent do the operators $\rest_k \circ \calL_{\alpha,k}$ fully capture all $p_k$-differential operators?
\end{enumerate}
For generic $(\xi,\lambda)$ the two questions are answered by the following Theorem and for non-generic $(\xi,\lambda)$ we give a full answer for $n=2$ in the last section. 
\begin{theorem} \label{thm:genericDSBO}
Let $k\in \{0,\dots,n\}$ and 
\[
\beta_\ell(\lambda,\nu)=\sum_{i=1}^\ell(\nu_i-\lambda_i-\tfrac{1}{2}),\qquad \beta_\ell'(\lambda,\nu)=\sum_{i=n+1-\ell}^n(\lambda_{i+1}-\nu_i-\tfrac{1}{2}).
\]
Consider the following subset of parameters in $(\ZZ/2\ZZ)^{n+1}\times \CC^{n+1}\times (\ZZ/2\ZZ)^n\times \CC^n$:
\begin{align*}
L_k&=\bigcap_{\ell=1}^k\bigg\{(\xi,\lambda,\eta,\nu)\,\Big|\,\beta_\ell(\lambda,\nu)\in \NN\quad\text{and}\quad \sum_{i=1}^k\eta_i+\xi_i=[\beta_\ell(\lambda,\nu)]\bigg\}\\
&\cap \bigcap_{\ell=1}^{n-k}\Big\{(\xi,\lambda,\eta,\nu)\,\Big|\,\beta_\ell'(\lambda,\nu)\in \NN\quad\text{and}\quad\sum_{i=n+1-\ell}^n\eta_i+\xi_{i+1}=[\beta_\ell'(\lambda,\nu)]\Big\}.
\end{align*}
If $(\xi,\lambda,\eta,\nu)\notin L_k$ then 
\[
\operatorname{DSBO}_k(\pi_{\xi,\lambda},\tau_{\eta,\nu})=\{0\}.
\]
For $(\xi,\lambda,\eta,\nu)\in L_k\cap \{(\xi,\lambda,\eta,\nu)\,|\,\lambda_i-\lambda_j\notin\ZZ\}$ we have that
\[
\operatorname{Hom}_H(\pi_{\xi,\lambda},\tau_{\eta,\nu})=\operatorname{DSBO}_k(\pi_{\xi,\lambda},\tau_{\eta,\nu})=\CC (\rest_k\circ \calL_{\alpha,k}),
\]
with $\alpha_i=\beta_i(\lambda,\nu)-\beta_{i-1}(\lambda,\nu)$ for $1\leq i\leq k$ and $\alpha_i=\beta_{n+1-i}'(\lambda,\nu)-\beta_{n-i}'(\lambda,\nu)$ for $k+1\leq i\leq n$.
\end{theorem}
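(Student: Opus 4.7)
The plan is to split the argument into two halves: showing $\DSBO_k(\pi_{\xi,\lambda},\tau_{\eta,\nu})=\{0\}$ whenever $(\xi,\lambda,\eta,\nu)\notin L_k$, and producing an explicit one-dimensional basis when $(\xi,\lambda,\eta,\nu)\in L_k$ and $\lambda$ is generic. The main ingredients are the distributional kernel description \eqref{eq:SBOinTermsOfKernel}, the Sun--Zhu multiplicity-one theorem \cite{SunZhu2012} for the pair $(G,H)$, and the explicit candidate $\rest_k\circ\calL_{\alpha,k}$ together with its link to the meromorphic family of Theorem~B.

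For the vanishing half, take any $T\in\DSBO_k(\pi_{\xi,\lambda},\tau_{\eta,\nu})$ with distributional kernel $K\in\calD'(G)_{\xi,\lambda}^{\eta,\nu}$ supported at the single coset $x_kP_G\in G/P_G$. Fix a chart around $x_k$ identifying a slice transverse to $P_G$ with an open subset of an opposite-nilpotent subgroup; then $K$ pulls back to a distribution supported at a single point and is determined by a polynomial symbol $\sigma$ in the dual variables. Pushing the left $P_H$- and right $P_G$-equivariance from \eqref{eq:EquivarianceKernel} to the infinitesimal action yields explicit algebraic equations for $\sigma$: the $A_HA_G$-action prescribes homogeneity data that specialize to the integrality conditions $\beta_\ell(\lambda,\nu),\beta'_\ell(\lambda,\nu)\in\NN$, while the $M_HM_G$-action fixes the corresponding parities. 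Violating any of these kills $\sigma$, hence $T$; so $(\xi,\lambda,\eta,\nu)\notin L_k$ forces $\DSBO_k(\pi_{\xi,\lambda},\tau_{\eta,\nu})=\{0\}$.

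For the one-dimensional half, fix $(\xi,\lambda,\eta,\nu)\in L_k$ with $\lambda$ generic and set $\alpha_i$ as in the statement. A direct check from the definitions of $\eta_{\alpha,k}$ and $\nu_{\alpha,k}$ given after \eqref{eq:DefinitionL} shows that the parameters match, i.e.\ $(\eta_{\alpha,k}(\xi),\nu_{\alpha,k}(\lambda))=(\eta,\nu)$, and the defining conditions of $L_k$ ensure $\alpha\in\NN^n$. Hence $\rest_k\circ\calL_{\alpha,k}$ lies in $\DSBO_k(\pi_{\xi,\lambda},\tau_{\eta,\nu})$. Combined with the Sun--Zhu bound $\dim\Hom_H(\pi_{\xi,\lambda},\tau_{\eta,\nu})\leq 1$, the theorem reduces to the statement $\rest_k\circ\calL_{\alpha,k}\neq0$.

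This non-vanishing is the main obstacle. The route I would take is to exploit the meromorphic family $A^{\eta,\nu}_{\xi,\lambda}$ from \cite{DitlevsenFrahm24}: specialize parameters at a pole where $A^{\eta,\nu}_{\xi,\lambda}$ reduces to a nonzero scalar multiple of $\rest_k$, then apply the functional equation $A^{\eta,\nu}_{\xi',\lambda'}\circ\calL_{\alpha,k}=p(\xi,\lambda,\eta,\nu)\,A^{\eta,\nu}_{\xi,\lambda}$ flagged in the introduction to realize $\rest_k\circ\calL_{\alpha,k}$ itself as a residue of the holomorphic family. Non-vanishing then reduces to checking that the polynomial $p$ does not vanish on the relevant slice of parameters, which can be read off from its explicit form. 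A more self-contained inductive argument on $|\alpha|$ is also possible, leveraging the Knapp--Stein definitions of $\calD_i$ and $\calF_j$ together with the fact that the relevant Knapp--Stein intertwiners are isomorphisms for generic $\lambda$, but the residue route appears cleaner and has the added benefit of making the link with Theorem~B transparent.
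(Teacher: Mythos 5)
Your proposal follows essentially the same route as the paper: derive the integrality and parity constraints defining $L_k$ from the equivariance of the point-supported distributional kernel (the paper does this via Lemma~\ref{lemm:DifferentialEquationGeneral}, differentiating the $A$-equivariance at $t=1$ and evaluating the $M$-equivariance at $t=-1$), invoke the Sun--Zhu multiplicity-one theorem for the upper bound, and establish non-vanishing of $\rest_k\circ\calL_{\alpha,k}$ by realizing it as a residue of the meromorphic family $\mathbf{A}_{\xi,\lambda}^{\eta,\nu}$ via the Bernstein--Sato functional equations. The only notable difference is cosmetic: you sketch the residue argument rather than carrying it out, whereas the paper defers it to Section~4 and completes it by evaluating on the explicit test vectors $\mathbf{1}_{\xi,\lambda}^{\eta,\nu}$; you also mention an inductive alternative where the paper mentions a Verma-module duality alternative, but both treat those as asides.
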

We spend the rest of this section proving this theorem, except showing that $\rest_k\circ\calL_{\alpha,k}$ is non-zero which we postpone for the next section.

If there exists a $p_k$-differential symmetry breaking operator then it must have a corresponding distributional kernel $K$ that has support on $x_kP_G$ in $G/P_G$. Let 
$K_k(g)=K(x_kg)$ which has support at the origin $eP_G$ and consider it on the open dense set $\overline{N}_GP_G$ in $G/P_G$ which is isomorphic to $\R^{n(n+1)/2}$ and since it has support at the origin we get by classical distribution theory that
\begin{equation}\label{eq:distkernel}
K_k(\overline{n})=\sum_m a(m) \delta^{(m)}(\overline{n}),\qquad(a(m)\in \CC),
\end{equation}
where $m$ are lower triangular matrices with entries from $\NN$ and 
\[
\delta^{(m)}(\overline{n})=\prod_{1\leq j<i\leq n+1} \delta^{(m_{ij})}(\overline{n}_{ij}).
\]
Due to the equivariance property \eqref{eq:EquivarianceKernel}, we get the following equivariance property for $K_k$
\begin{equation}\label{eq:EquivarianceKernelShift}
K_k(x_k^{-1}m_Ha_Hn_Hx_k\overline{n}m_Ga_Gn_G)= \xi(m_G)\eta(m_H)a_G^{\lambda-\rho_G}a_H^{\nu+\rho_H}K_k(\overline{n}).
\end{equation}

\begin{lemma}\label{lemm:DifferentialEquationGeneral}
For $\ell\in\{1,\dots,k\} $ the distribution kernels $K_k$ satisfy the following differential equations
\begin{equation}\label{eq:Homogenous diff equation}
\sum_{i=\ell+1}^{n+1}\sum_{j=1}^\ell\overline{n}_{ij}\partial_{ij}=\ell(\ell-n)+\sum_{i=1}^\ell (\lambda_i-\nu_i-\tfrac{1}{2}),
\end{equation}
and for $\ell\in \{1,\dots,n-k\}$ they also satisfy
\begin{equation}\label{eq:Homogenous diff equation 2}
\sum_{i=n+2-\ell}^{n+1}\sum_{j=1}^{n+1-\ell}\overline{n}_{ij}\partial_{ij}=\ell(\ell-n)+\sum_{i=n+1-\ell}^n(\nu_i-\lambda_{i+1}-\tfrac{1}{2}).
\end{equation}
Similarly, for $\ell\in \{1,\dots,k\}$
\[
K_k\Big(\gamma_\ell(-1)\overline{n}\gamma_\ell(-1)\Big)=(-1)^{\sum_{i=1}^\ell\xi_i+\eta_i}K_k(\overline{n}),
\]
and for $\ell \in \{1,\dots,n-k\}$
\[
K_k\Big(\delta_\ell(-1)\overline{n}\delta_\ell(-1)\Big)=(-1)^{\sum_{i=n+1-\ell}^n\eta_i+\xi_{i+1}}K_k(\overline{n}).
\]
\end{lemma}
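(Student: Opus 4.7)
The plan is to exploit the equivariance \eqref{eq:EquivarianceKernelShift} by substituting suitable one-parameter subgroups of $A_H$ (giving the Euler-type equations \eqref{eq:Homogenous diff equation} and \eqref{eq:Homogenous diff equation 2} after differentiation) and specific sign elements of $M_H$ (giving the parity relations by direct evaluation). In both cases the key input is that $x_k^{-1}A_Hx_k \subseteq A_G$ and $x_k^{-1}M_Hx_k \subseteq M_G$, since $x_k$ is a permutation matrix, combined with the fact that $\overline{N}_G$ is normalized by $A_G$ and $M_G$.

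For equation \eqref{eq:Homogenous diff equation}, I would take $a_H(t)=\diag(t,\dots,t,1,\dots,1)\in A_H$ with the first $\ell$ entries equal to $t$ (which requires $\ell\leq k$). Using the permutation encoded by $x_k$ one checks that $a(t):=x_k^{-1}a_H(t)x_k\in A_G$ also has $t$ in the first $\ell$ diagonal entries and $1$ elsewhere. Writing $a(t)\overline{n} = (a(t)\overline{n}a(t)^{-1})\cdot a(t)$ and applying \eqref{eq:EquivarianceKernelShift} on both sides yields the functional equation
\[
K_k\bigl(a(t)\overline{n}a(t)^{-1}\bigr) \;=\; a(t)^{\rho_G-\lambda}\,a_H(t)^{\nu+\rho_H}\,K_k(\overline{n}).
\]
Because $(a(t)\overline{n}a(t)^{-1})_{ij}=t^{-1}\overline{n}_{ij}$ when $i>\ell$ and $j\leq\ell$ and is unchanged otherwise, differentiating at $t=1$ produces the Euler-type operator $-\sum_{i>\ell,\,j\leq\ell}\overline{n}_{ij}\partial_{ij}$ on the left. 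A direct computation using $\rho_G=\tfrac12(n,n-2,\dots,-n)$ and $\rho_H=\tfrac12(n-1,\dots,1-n)$ shows that the logarithmic derivative of the scalar on the right at $t=1$ equals $\ell(n-\ell)+\sum_{i=1}^\ell(\nu_i-\lambda_i+\tfrac12)$. Comparing both sides and changing sign yields \eqref{eq:Homogenous diff equation}.

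Equation \eqref{eq:Homogenous diff equation 2} is obtained identically, but now using $a_H(t)=\diag(1,\dots,1,t,\dots,t)\in A_H$ with the last $\ell$ entries equal to $t$ (requiring $\ell\leq n-k$); the index shift $\lambda_i\to\lambda_{i+1}$ in the formula reflects the fact that $x_k$ places the trivial entry at position $k+1$ of $a(t)$. For the parity relations the same reduction applies with $A_H$ replaced by $M_H$: taking $m_H=\gamma_\ell(-1)\in M_H$ (diagonal with $-1$ in the first $\ell$ entries and $1$ elsewhere) and using $m_H^2=I$, \eqref{eq:EquivarianceKernelShift} immediately yields
\[
K_k\bigl(\gamma_\ell(-1)\overline{n}\gamma_\ell(-1)\bigr) = \xi(x_k^{-1}m_Hx_k)\,\eta(m_H)\,K_k(\overline{n}),
\]
and the product of characters evaluates to $(-1)^{\sum_{i=1}^\ell(\xi_i+\eta_i)}$. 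The second parity relation follows analogously with $\delta_\ell(-1)\in M_H$ having $-1$ in the last $\ell$ entries; the shift $\xi_i\to\xi_{i+1}$ again comes from the trivial $(k+1,k+1)$-entry of $x_k^{-1}\delta_\ell(-1)x_k$.

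The main obstacle is purely combinatorial: one has to carefully track how the permutation $x_k$ reindexes the diagonal entries between $H$ and $G$, and to sum the contributions of $\rho_G$ and $\rho_H$ to recover the exact constants $\lambda_i-\nu_i-\tfrac12$ and $\nu_i-\lambda_{i+1}-\tfrac12$ appearing in the two Euler equations. No new ingredient is required beyond \eqref{eq:EquivarianceKernelShift} and the stability of $\overline{N}_G$ under conjugation by $A_G$.
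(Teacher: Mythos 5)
Your proposal is correct and follows essentially the same route as the paper: apply the equivariance \eqref{eq:EquivarianceKernelShift} to the one-parameter diagonal subgroups $\gamma_\ell(t)$ and $\delta_\ell(t)$, use that conjugation by the permutation matrix $x_k$ turns them into explicit elements of $A_G$ (resp.\ $M_G$ for $t=-1$), and differentiate (resp.\ evaluate) at $t=1$ (resp.\ $t=-1$). The only superficial difference is that you conjugate by $a(t)(\cdot)a(t)^{-1}$ where the paper uses $\gamma_\ell(t)^{-1}(\cdot)\gamma_\ell(t)$, which flips a sign simultaneously on both sides, and your $\rho$-bookkeeping $\sum_{i=1}^\ell((\rho_G)_i+(\rho_H)_i)=\ell(n-\ell)+\tfrac{\ell}{2}$ matches the paper's constant.
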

\begin{proof} 
Consider the matrices in $M_GA_G$
\[
\gamma_\ell(t)=\diag(t,\dots,t,1,\dots,1),\qquad \delta_\ell(t)=\diag(1,\dots,1,t,\dots,t),\qquad(t\in \RR^\times),
\]
where there are exactly $\ell$ entries containing $t$'s.
Here we note that $x_k\gamma_\ell(t)x_k^{-1}=\gamma_\ell(t)$ and $x_k\delta_\ell(t)x_k^{-1}=\diag(1,\dots,1,t,\dots,t,1)$ for $\ell\leq k$. Thus
\[
K_k(\gamma_\ell(t)^{-1}\overline{n}\gamma_\ell(t))=\gamma_\ell(t)^{-\nu-\rho_H}\gamma_\ell(t)^{\lambda-\rho_G} K_k(\overline{n}),
\]
and 
\[
K_k(\delta_\ell(t)\overline{n}\delta_\ell(t)^{-1})=(x_k\delta_\ell(t)x_k^{-1})^{\nu+\rho_H}\delta_\ell(t)^{-\lambda+\rho_G} K_k(\overline{n}).
\]
Now, taking $\frac{d}{dt}|_{t=1}$ we get the first two differential equations. An almost completely identical argument with $t=-1$ without taking derivatives gives the last two equations.\end{proof}

Now, applying these differential equations to \eqref{eq:distkernel}, using that $\delta^{(m_{ij})}(\overline{n}_{ij})$ is homogeneous of degree $-m_{ij}-1$, we get that $\beta_k(\lambda,\nu),\beta_k'(\lambda,\nu)\in \NN$ for $K_k$ to exist. Using the last two equations we get the condition on $(\xi,\eta)$ in $L_k$. This proves the first statement of Theorem \ref{thm:genericDSBO}.

For the second statement we use that $(G,H)=(\GL_{n+1}(\RR), \GL_n(\RR))$ is a so-called multiplicity one pair, meaning that if $\pi$ and $\tau$ are irreducible smooth admissible Fréchet representations of $G$ (respectively of $H$), then $\dim\Hom_H(\pi|_H,\tau)\leq 1$. This was shown by Sun--Zhu in \cite{SunZhu2012}. Finally, it is clear that if $(\xi,\lambda)$ is generic and $(\xi,\lambda,\eta,\nu)\in L_k$ then $(\eta,\nu)$ is also generic and so $\tau_{\eta,\nu}$ is irreducible and thus by the multiplicity one property we get $\dim\operatorname{Hom}_H(\pi_{\xi,\lambda},\tau_{\eta,\nu})\leq 1$. A more algebraic approach is possible using the duality Theorem from \cite{KobayashiPevzner16_1} which gives an isomorphism between differential symmetry breaking operators and morphisms between Verma modules. Then, for generic parameters a multiplicity free formula for the branching of Verma module has been proved in \cite{Kobayashi12Verma}. Assuming $\rest_k\circ \calL_{\alpha,k}\neq 0$ we get the last result of Theorem \ref{thm:genericDSBO}.

\section{Differential symmetry breaking operators as residues}
In this section we introduce a meromorphic family of symmetry breaking operators from \cite{DitlevsenFrahm24} and show that the differential symmetry breaking operators obtained from the source operator method show up as residues of this family. Using this result, we show that $\rest_k\circ \calL_{\alpha,k}$ is non-zero for generic $(\xi,\lambda)$.

\subsection{Recollection of the holomorphic family} 
In this subsection we simply restate some definitions and results from \cite{DitlevsenFrahm24} that will be used later. Consider the the function on $G$ given by 
\[
K_{\xi,\lambda}^{\eta,\nu}(g)=\prod_{k=1}^{n+1}|\kappa_k(g)|^{s_k}_{\delta_k}\prod_{\ell=1}^n|\theta_\ell(g)|^{t_\ell}_{\varepsilon_\ell},
\]
where 
\[
\kappa_k(g)=\det(g_{ij})_{\substack{i=n+2-k,\dots,n+1, j=1,\dots,k}},\qquad \theta_\ell(g)=\det(g_{ij})_{i=n+1-\ell,\dots,n,j=1,\dots,\ell},
\]
and for $i=1,\dots,n$ we have
\[
s_i=\lambda_i-\nu_{n+1-i}-\tfrac{1}{2},\quad t_i=\nu_{n+1-i}-\lambda_{i+1}-\tfrac{1}{2},\quad\delta_i=\xi_i+\eta_{n+1-i},\quad \varepsilon_i=\eta_{n+1-i}+\xi_{i+1},
\]
with $s_{n+1}=\lambda_{n+1}+\frac{n}{2}$ and $\delta_{n+1}=\xi_{n+1}$. The parameters $(\delta,s,\varepsilon,t)$ are sometimes referred to as \emph{spectral parameters} and, by abusing notation, we use them interchangeably with the induction parameters $(\xi,\lambda,\eta,\nu)$ i.e. $K_{\delta,s}^{\varepsilon,t}=K_{\xi,\lambda}^{\eta,\nu}$. Note that $\Phi=\kappa_1$ and $\Psi=\theta_n$. The function $K_{\xi,\lambda}^{\eta,\nu}$ is in $\calD'(G)_{\xi,\lambda}^{\eta,\nu}$ whenever its locally integrable which it is when $\Re(\lambda_i-\nu_j)>-\frac{1}{2}$ for $i+j\leq n+1$ and $\Re(\nu_j-\lambda_i)>-\frac{1}{2}$ when $i+j\geq n+2$. Let 
\[
\gamma(\xi,\lambda,\eta,\nu)=\prod_{i+1\leq n+1}\Gamma\left(\tfrac{\lambda_i-\nu_j+\frac{1}{2}+[\xi_i+\eta_{j}]}{2}\right)\prod_{i+j\geq n+2}\Gamma\left(\tfrac{\nu_j-\lambda_i+\frac{1}{2}+[\eta_j+\xi_i]}{2}\right),
\]
then $\mathbf{K}_{\xi,\lambda}^{\eta,\nu}:=\gamma(\xi,\lambda,\eta,\nu)^{-1}K_{\xi,\lambda}^{\eta,\nu}$ is holomorphic and defines a holomorphic family of symmetry breaking operators by \eqref{eq:SBOinTermsOfKernel} denoted by $\mathbf{A}_{\xi,\lambda}^{\eta,\nu}$. We then have the following functional equations with the Knapp--Stein intertwining operators 
\begin{equation}\label{eq:functionalEq1}
\mathbf{T}_{\eta,\nu}^{w_i}\circ \mathbf{A}_{\xi,\lambda}^{\eta,\nu}=\frac{\alpha(\xi,\eta)}{\Gamma(\frac{\nu_{i+1}-\nu_i+1+[\eta_{i+1}+\eta_i]}{2})}\mathbf{A}_{\xi,\lambda}^{w_i(\eta,\nu)},
\end{equation}
and 
\begin{equation}\label{eq:functionalEq2}
\mathbf{A}_{\xi,\lambda}^{\eta,\nu}\circ \mathbf{T}_{w_i(\xi,\lambda)}^{w_i}=\frac{\alpha'(\xi,\eta)}{\Gamma(\frac{\lambda_i-\lambda_{i+1}+1+[\xi_i+\xi_{i+1}]}{2})}\mathbf{A}_{w_i(\xi,\lambda)}^{\eta,\nu},
\end{equation}
where $\alpha(\xi,\eta)$ and $\alpha'(\xi,\eta)$ are non-zero scalars. Here we abuse notation and use the symbol $T$ for the Knapp--Stein intertwining operators for both $\pi_{\xi,\lambda}$ and $\tau_{\eta,\nu}$ but the indices indicates which one we are considering. The distributional kernel also satisfies the two following Bernstein--Sato identities 
\begin{equation}\label{eq:BernsteinsatoF}
\calF_i(-\lambda)\mathbf{K}_{\xi,\lambda}^{\eta,\nu}=p_i(\xi,\lambda,\eta,\nu)\mathbf{K}_{\xi+\widehat{e}_i,\lambda+\widehat{e}_i}^{\eta+\mathds{1},\nu+\mathds{1}},
\end{equation}
and
\begin{equation}\label{eq:BernsteinsatoD}
\calD_i(-\lambda)\mathbf{K}_{\xi,\lambda}^{\eta,\nu}=q_i(\xi,\lambda,\eta,\nu)\mathbf{K}_{\xi+e_{i},\lambda+e_{i}}^{\eta,\nu},
\end{equation}
where
\[
p_i(\xi,\lambda,\eta,\nu)=\varepsilon_i(\xi,\eta)\prod_{j=1}^n\Big(\frac{\nu_j-\lambda_i+\tfrac{1}{2}}{2}\Big)^{1-[\xi_i+\eta_j]}=\varepsilon_i(\xi,\eta)\prod_{j=1}^n\frac{\Gamma(\frac{\nu_j-\lambda_i+\frac{3}{2}+[\xi_i+\eta_j+1]}{2})}{\Gamma(\frac{\nu_j-\lambda_i+\frac{1}{2}+[\xi_i+\eta_j]}{2})},
\]
and 
\[
q_i(\xi,\lambda,\eta,\nu)=\varepsilon_i'(\xi,\eta)\prod_{j=1}^n\Big(\frac{\lambda_{i}-\nu_j+\tfrac{1}{2}}{2}\Big)^{1-[\xi_{i}+\eta_j]}=\varepsilon_i'(\xi,\eta)\prod_{j=1}^n\frac{\Gamma(\frac{\lambda_i-\nu_j+\frac{3}{2}+[\xi_i+\eta_j+1]}{2})}{\Gamma(\frac{\lambda_i-\nu_j+\frac{1}{2}+[\xi_i+\eta_j]}{2})},
\]
with $\varepsilon_i,\varepsilon_i'$ being non-zero scalars.
%%%%%%%%%%%
%%%%%%%%%%%
%%%%%%%%%%%

\subsection{Bernstein--Sato identities}
In this subsection we establish a functional identity relating $\mathbf{A}_{\xi,\lambda}^{\eta,\nu}\circ \calL_{\alpha,k}$ to another $\mathbf{A}_{\xi',\lambda'}^{\eta',\nu'}$. This is done by using the Bernstein--Sato identities $\eqref{eq:BernsteinsatoF}$ and \eqref{eq:BernsteinsatoD}.
These identities are on the level of the distributional kernel $\mathbf{K}_{\xi,\lambda}^{\eta,\nu}$ and as 
\[
\mathbf{A}_{\xi,\lambda}^{\eta,\nu}f(h)=\langle\mathbf{K}_{\xi,\lambda}^{\eta,\nu},f(h\,\cdot)\rangle_{G/P_G},
\]
we need to find the transpose of $\calF_i$ and $\calD_j$ with respect to $\langle \, \cdot \,,\, \cdot\,\rangle_{G/P_G}$ in order to say something about $\mathbf{A}_{\xi,\lambda}^{\eta,\nu}\circ \calD_i$ and $\mathbf{A}_{\xi,\lambda}^{\eta,\nu}\circ \calF_j$. 

\begin{lemma}
The transpose of $\calD_i(\lambda)$ and $\calF_i(\lambda)$ with respect to $\langle \, \cdot \, , \, \cdot \,\rangle_{G/P_G}$ are given by
\[
\calD_i(\lambda)^\intercal=\calD_i(-\lambda+e_{i}),\qquad \text{and}\qquad \calF_i(\lambda)^\intercal=\calF_i(-\lambda+\hat{e}_i).
\]
\end{lemma}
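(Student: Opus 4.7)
The plan is to proceed by induction on $i$, using the recursive definition of $\calD_{i+1}(\lambda)$ and $\calF_i(\lambda)$ as conjugates of multiplication operators by Knapp--Stein intertwiners, together with two basic transpose identities. Since the pairing $\langle\cdot,\cdot\rangle_{G/P_G}$ is realized on the open Bruhat cell as integration of a pointwise product over $\overline{N}_G$, the multiplication operators are self-transpose between the appropriate principal series: $M_\Phi^\intercal = M_\Phi$ and $M_\Psi^\intercal = M_\Psi$. This settles the base cases $\calD_1(\lambda)^\intercal = \calD_1(-\lambda + e_1)$ and $\calF_{n+1}(\lambda)^\intercal = \calF_{n+1}(-\lambda + \hat{e}_{n+1})$ at once, since these operators carry no $\lambda$-dependence. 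The second auxiliary identity is
\[
(T^{w_i}_{\xi,\lambda})^\intercal = (-1)^{\xi_i+\xi_{i+1}}\, T^{w_i}_{w_i(\xi,-\lambda)},
\]
which I would derive by writing $\langle T^{w_i}_{\xi,\lambda}f_1, f_2\rangle$ as a double integral over $\overline{N}_G \times \RR$ using the explicit integral formula for $T^{w_i}$, then performing the substitutions $\overline{n} \mapsto \overline{n}\,\overline{n}_i(-x)$ (invariance of the pairing under right translation by $\overline{N}_G$) and $x \mapsto -x$; the latter produces the sign from the factor $|x|_{\xi_i+\xi_{i+1}}^{\lambda_i-\lambda_{i+1}-1}$.

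For the induction step I would take the transpose of
\[
\calD_{i+1}(\lambda) = \lambda_{i+1,i}\, T^{w_i}_{w_i(\xi,\lambda)+(e_i,e_i)} \circ \calD_i(w_i\lambda) \circ \bigl(T^{w_i}_{w_i(\xi,\lambda)}\bigr)^{-1},
\]
reversing the order of composition and converting each Knapp--Stein factor according to the identity above. Using the induction hypothesis $\calD_i(w_i\lambda)^\intercal = \calD_i(-w_i\lambda + e_i)$ together with $w_i e_i = e_{i+1}$ and $w_i^2 = 1$, the transposed expression rearranges into precisely the structural shape of the recursion evaluated at the shifted parameters $(\xi - e_{i+1}, -\lambda + e_{i+1})$, up to a scalar factor. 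To identify this scalar with the required $(-\lambda + e_{i+1})_{i+1,i} = \lambda_i - \lambda_{i+1}$, I would rewrite the inverse Knapp--Stein operators using $T^{w_i}_{w_i(\xi',\lambda')} \circ T^{w_i}_{\xi',\lambda'} = c_i(\xi',\lambda')\,\id$; the remaining discrepancy then becomes a ratio of Harish--Chandra $c$-functions, which collapses to the desired rational expression after a few applications of $\Gamma(z+1) = z\Gamma(z)$ to the explicit formula \eqref{eq:c-function}. The argument for $\calF_i(\lambda)$ runs in parallel, with the induction proceeding downward from the base $i = n+1$ and $\hat{e}_i$ replacing $e_i$ throughout.

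The main obstacle is sign and scalar bookkeeping. Two sign factors $(-1)^{\xi_i+\xi_{i+1}}$ appear in the transposed recursion (one from $T^{w_i}$ and one from its inverse), along with a further parity shift coming from the shifted subscript $w_i(\xi,\lambda)+(e_i,e_i)$, and these must cancel correctly so that the final formula is $\xi$-independent in the expected way. Likewise the collapse of the $c$-function ratio must yield precisely the factor $(\lambda_i-\lambda_{i+1})/\lambda_{i+1,i}$ with no spurious sign; this is the only genuinely computational step of the proof, and the one place where the explicit formula \eqref{eq:c-function} is substantively used.
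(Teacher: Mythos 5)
Your plan follows essentially the same route as the paper's proof: transpose the conjugated-multiplication expression for $\calD_i$, use $M_\Phi^\intercal = M_\Phi$ (resp.\ $M_\Psi^\intercal = M_\Psi$) as the base case, apply the transpose identity for the Knapp--Stein operators, rewrite the inverse Knapp--Stein factor via the $c$-function, and show the accumulated scalar collapses to $1$ using the explicit formula \eqref{eq:c-function} and $\Gamma(z+1) = z\Gamma(z)$. The one point to watch: you state the Knapp--Stein transpose with the sign factor $(-1)^{\xi_i+\xi_{i+1}}$, whereas the paper records $(T^w_{\xi,\lambda})^\intercal = T^w_{w(\xi,-\lambda)}$ without it; since the paper's $c$-function \eqref{eq:c-function} already carries a $(-1)^{\xi_i+\xi_{i+1}}$, these two sign conventions are entangled and feed into the final bookkeeping, where (as you correctly flag) an extra spurious $-1$ can easily appear — so make sure your choice of convention for the transpose is consistent with the same convention used in deriving the $c$-function expression before declaring the scalar equal to $1$.
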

\begin{proof}
We show that this is the case for $\calD_2(\lambda)$ then the rest of the $\calD_i(\lambda)$ will follow similarly using the recursive definition. Writing 
\[
\calD_2(\lambda)=\lambda_{2,1}T^{w_1}_{(\xi,\lambda)-(e_2,e_2)}\circ M_\Phi\circ \big(T^{w_1}_{w_1(\xi,\lambda)}\big)^{-1},
\]
and using the fact that $(T_{\xi,\lambda}^w)^\intercal=T_{w(\xi,-\lambda)}^w$ and $M_{\Phi_1}^\intercal=M_{\Phi_1}$ and writing the inverse of the Knapp--Stein intertwiner using $c_1(\xi,\lambda)$ from \eqref{eq:InverseTony} we get that 
\[
\langle\calD_2(\lambda)\phi,\psi \rangle_{G/P_G} =\frac{\lambda_{2,1}c_1(\xi+e_2,\lambda-e_2)}{(\lambda_{1,2}+1)c_1(\xi,\lambda)}\langle \phi,\calD_2(-\lambda+e_2)\psi\rangle_{G/P_G}.
\]
The scalar in front can then be verified to be one using \eqref{eq:c-function}.
\end{proof}
Using this result we can rewrite the Bernstein--Sato identities \eqref{eq:BernsteinsatoF} and \eqref{eq:BernsteinsatoD} in the following way
\[
\mathbf{A}_{\xi,\lambda}^{\eta,\nu}\circ \calF_i(\lambda+\hat{e}_i)=p_i(\xi,\lambda,\eta,\nu)\mathbf{A}_{(\xi,\lambda)+(\hat{e}_i,\hat{e}_i)}^{(\eta,\nu)+(\mathds{1},\mathds{1})},
\]
and
\[
\mathbf{A}_{\xi,\lambda}^{\eta,\nu}\circ\calD_i(\lambda+e_{i})=q_i(\xi,\lambda,\eta,\nu)\mathbf{A}_{(\xi,\lambda)+(e_{i},e_{i})}^{\eta,\nu}.
\]
In the first identity it appears as if we equate two operators that maps to two different spaces, namely $\tau_{\eta,\nu}$ and $\tau_{(\eta,\nu)+(\mathds{1},\mathds{1})}$, however we can note that $\tau_{\eta,\nu}\otimes \det_H\simeq \tau_{(\eta,\nu)+(\mathds{1},\mathds{1})}$ so the spaces are the same. Iterating these identities we get the following result.
\begin{proposition}\label{Prop:IterratedBSIdentities}
For $\alpha\in \NN$ we have the following identities 
\[
\mathbf{A}_{\xi,\lambda}^{\eta,\nu}\circ \calF_i(\lambda+\alpha\hat{e}_i)^\alpha=p_i^{(\alpha)}(\xi,\lambda,\eta,\nu)\mathbf{A}_{(\xi,\lambda)+\alpha(\hat{e}_i,\hat{e}_i)}^{(\eta,\nu)+\alpha(\mathds{1},\mathds{1})},
\]
and
\[
\mathbf{A}_{\xi,\lambda}^{\eta,\nu}\circ \calD_i(\lambda+\alpha e_i)^\alpha=q_i^{(\alpha)}(\xi,\lambda,\eta,\nu)\mathbf{A}_{(\xi,\lambda)+\alpha(e_{i},e_{i})}^{\eta,\nu},
\]
where the polynomials $p_i^{(\alpha)}$ and $q_i^{(\alpha)}$ are given by
\[
p_i^{(\alpha)}(\xi,\lambda,\eta,\nu)=\varepsilon_i^{(\alpha)}(\xi,\eta)\prod_{j=1}^n\frac{\Gamma(\frac{\nu_j-\lambda_i+\frac{1}{2}+\alpha+[\xi_i+\eta_j+\alpha]}{2})}{\Gamma(\frac{\nu_j-\lambda_i+\frac{1}{2}+[\xi_i+\eta_j]}{2})},
\]
and
\[
q_i^{(\alpha)}(\xi,\lambda,\eta,\nu)=\varepsilon_i'^{(\alpha)}(\xi,\eta)\prod_{j=1}^n
\frac{\Gamma(\frac{\lambda_i-\nu_j+\frac{1}{2}+\alpha+[\xi_i+\eta_j+\alpha]}{2})}{\Gamma(\frac{\lambda_i-\nu_j+\frac{1}{2}+[\xi_i+\eta_j]}{2})}.
\]
Here $\varepsilon_i^{(\alpha)}$ and $\varepsilon_i'^{(\alpha)}$ are non-zero scalars.
\end{proposition}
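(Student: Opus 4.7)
The plan is to prove both identities by induction on $\alpha \geq 1$. The base case $\alpha = 1$ is exactly the single-step Bernstein--Sato identity
\[
\mathbf{A}_{\xi,\lambda}^{\eta,\nu} \circ \calD_i(\lambda + e_i) = q_i(\xi,\lambda,\eta,\nu)\, \mathbf{A}_{(\xi,\lambda)+(e_i,e_i)}^{\eta,\nu}
\]
established just before the proposition (and similarly for $\calF_i$). For the inductive step, I would first unfold the definition of the iterated source operator as
\[
\calD_i(\lambda + \alpha e_i)^\alpha = \calD_i(\lambda + e_i) \circ \calD_i(\lambda + 2 e_i) \circ \cdots \circ \calD_i(\lambda + \alpha e_i),
\]
peel off the outermost factor $\calD_i(\lambda + e_i)$ using the base identity, and then observe that the remaining composition is precisely $\calD_i((\lambda + e_i) + (\alpha-1) e_i)^{\alpha-1}$. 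Applying the inductive hypothesis at the shifted parameters $(\xi + e_i, \lambda + e_i, \eta, \nu)$ produces $\mathbf{A}_{(\xi,\lambda)+\alpha(e_i,e_i)}^{\eta,\nu}$ multiplied by the scalar $q_i(\xi,\lambda,\eta,\nu)\cdot q_i^{(\alpha-1)}(\xi + e_i, \lambda + e_i, \eta, \nu)$.

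What remains is the purely computational check that
\[
q_i^{(\alpha)}(\xi,\lambda,\eta,\nu) = q_i(\xi,\lambda,\eta,\nu)\cdot q_i^{(\alpha-1)}(\xi + e_i, \lambda + e_i, \eta, \nu),
\]
which is the only non-formal step. Setting $a_j := \lambda_i - \nu_j + \tfrac{1}{2}$ and $b_j := \xi_i + \eta_j$, the $j$-th Gamma ratio in $q_i^{(\alpha-1)}(\xi + e_i, \lambda + e_i, \eta, \nu)$ becomes $\Gamma(\tfrac{a_j + \alpha + [b_j + \alpha]}{2})/\Gamma(\tfrac{a_j + 1 + [b_j + 1]}{2})$, whose denominator matches the numerator of the $j$-th factor of $q_i$. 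After this telescoping one recovers exactly the Gamma ratio in $q_i^{(\alpha)}(\xi,\lambda,\eta,\nu)$, and the scalar part is defined inductively by $\varepsilon_i'^{(\alpha)}(\xi,\eta) := \varepsilon_i'(\xi,\eta)\cdot \varepsilon_i'^{(\alpha-1)}(\xi + e_i, \eta)$, which is nonzero since each factor is. The $\calF_i$ statement is formally identical with $e_i$ replaced by $\hat{e}_i$; the only extra bookkeeping is that each application of the base identity for $\calF_i$ twists the target by $\det|_H$, absorbed using the isomorphism $\tau_{\eta,\nu}\otimes \det|_H^{\alpha}\simeq \tau_{(\eta,\nu)+\alpha(\mathds{1},\mathds{1})}$.

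I do not anticipate any serious obstacle. The only point where a mistake could plausibly creep in is the correct unfolding of $\calD_i(\lambda + \alpha e_i)^\alpha$, so that the outermost factor is located at parameter $\lambda + e_i$ (not $\lambda + \alpha e_i$); once that is done, both the reduction to the inductive hypothesis and the parity-dependent Gamma-factor telescoping are routine. The explicit formulas for $p_i^{(\alpha)}$ and $q_i^{(\alpha)}$ stated in the proposition are then simply read off from the telescoping product.
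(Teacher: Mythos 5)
Your proof is correct and follows the same route the paper takes: the paper simply says ``Iterating these identities we get the following result,'' and your induction makes that iteration precise, including the crucial observation that $\calD_i(\lambda+\alpha e_i)^\alpha$ unfolds with $\calD_i(\lambda+e_i)$ as the outermost factor so that the base Bernstein--Sato identity applies to $\mathbf{A}_{\xi,\lambda}^{\eta,\nu}\circ\calD_i(\lambda+e_i)$. The Gamma-factor telescoping you sketch checks out, since $[\xi_i+1+\eta_j+\alpha-1]=[\xi_i+\eta_j+\alpha]$ and the shifted denominator matches the single-step numerator.
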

Now note that $p_i^{(\alpha)}(\xi,\lambda,\eta,\nu)$ are not affected by the change in the parameters of $\mathbf{A}_{\xi,\lambda}^{\eta,\nu}$ caused by $\calF_j$ and $\calD_\ell$ for $\ell\neq i$ and similarly for $q_i^{(\alpha)}(\xi,\lambda,\eta,\nu)$, so when composing with $\mathbf{A}_{\xi,\lambda}^{\eta,\nu}$ with $\calF_j$ and $\calD_\ell$ the order in which we do so will not affect the final result.
If we fix $k\in \{0,\dots, n\}$ and therefore the support $x_kP_G$ of $\mathbf{K}_{\xi,\lambda}^{\eta,\nu}$, we can compose $\mathbf{A}_{\xi,\lambda}^{\eta,\nu}$ by $\calL_{\alpha,k}$ (see \eqref{eq:DefinitionL}) to get the following.

\begin{corollary}\label{cor:functionalequation}
For $\alpha\in \NN^n$ we have that 
\[
\mathbf{A}_{\xi,\lambda}^{\eta,\nu}\circ \calL_{\alpha,k}=\prod_{i=1}^k p_i^{(\alpha_i)}(\xi,\lambda,\eta,\nu)\prod_{i=k+1}^n q_{i+1}^{(\alpha_i)}(\xi,\lambda,\eta,\nu)\times \mathbf{A}_{(\xi,\lambda)+\sum_{i=1}^k\alpha_i(\hat{e}_i,\hat{e}_i)+\sum_{i=k+1}^n\alpha_i(e_{i+1},e_{i+1})}^{(\eta,\nu)+\sum_{i=1}^k\alpha_i(\mathds{1},\mathds{1})}
\]
\end{corollary}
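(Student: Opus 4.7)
The plan is to iterate Proposition~\ref{Prop:IterratedBSIdentities} along the factorization
\[
\calL_{\alpha,k}=\calF_1^{\alpha_1}\circ\cdots\circ \calF_k^{\alpha_k}\circ\calD_{k+2}^{\alpha_{k+1}}\circ\cdots\circ \calD_{n+1}^{\alpha_n},
\]
absorbing one factor at a time into the outer $\mathbf{A}$, proceeding from left to right. Applying the first identity of Proposition~\ref{Prop:IterratedBSIdentities} absorbs an $\calF_i^{\alpha_i}$ with a scalar $p_i^{(\alpha_i)}$ and shifts the parameters of $\mathbf{A}$ by $\alpha_i(\hat{e}_i,\hat{e}_i)$ in $(\xi,\lambda)$ and $\alpha_i(\mathds{1},\mathds{1})$ in $(\eta,\nu)$; the second identity absorbs a $\calD_{i+1}^{\alpha_i}$ with a factor $q_{i+1}^{(\alpha_i)}$ and a shift by $\alpha_i(e_{i+1},e_{i+1})$ in $(\xi,\lambda)$ only. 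Accumulating these shifts reproduces exactly the parameters of the $\mathbf{A}$ on the right-hand side of the claim.

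The only nontrivial point is verifying that the scalar produced at step $i$ is the polynomial evaluated at the \emph{original} parameters $(\xi,\lambda,\eta,\nu)$, even though Proposition~\ref{Prop:IterratedBSIdentities} is applied to an $\mathbf{A}$ whose parameters already carry the shifts from earlier steps. Inspecting
\[
p_i^{(\alpha_i)}(\xi,\lambda,\eta,\nu)=\varepsilon_i^{(\alpha_i)}(\xi,\eta)\prod_{j=1}^n\frac{\Gamma\!\left(\tfrac{\nu_j-\lambda_i+\frac{1}{2}+\alpha_i+[\xi_i+\eta_j+\alpha_i]}{2}\right)}{\Gamma\!\left(\tfrac{\nu_j-\lambda_i+\frac{1}{2}+[\xi_i+\eta_j]}{2}\right)},
\]
the only parameter-dependent quantities are the differences $\nu_j-\lambda_i$ and the parities $[\xi_i+\eta_j]$. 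A previously absorbed $\calF_\ell^{\alpha_\ell}$ with $\ell\neq i$ increments $\lambda_i$ and every $\nu_j$ by $\alpha_\ell$, hence leaves each $\nu_j-\lambda_i$ unchanged; it also increments $\xi_i$ and every $\eta_j$ by $\alpha_\ell$, preserving the parity. A previously absorbed $\calD_\ell^{\alpha_\ell}$ with $\ell\neq i$ only affects $\xi_\ell,\lambda_\ell$ and leaves $(\eta,\nu)$ untouched, so again the relevant quantities are unaffected. A symmetric argument applies to $q_{i+1}^{(\alpha_i)}$.

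Putting these observations together, the $n$ scalars produced multiply to give precisely $\prod_{i=1}^k p_i^{(\alpha_i)}(\xi,\lambda,\eta,\nu)\prod_{i=k+1}^n q_{i+1}^{(\alpha_i)}(\xi,\lambda,\eta,\nu)$, and the cumulative parameter shift matches the statement. The main obstacle is this invariance check for the scalar factors; once it is in place, the order in which the factors $\calF_i^{\alpha_i}$ and $\calD_{j+1}^{\alpha_j}$ are absorbed is immaterial and the identity follows.
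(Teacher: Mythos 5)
Your proof is correct and takes essentially the same approach as the paper: the paper proves the corollary by the one-line remark immediately preceding it, namely that the scalars $p_i^{(\alpha)}$ and $q_i^{(\alpha)}$ are unaffected by the parameter shifts arising from the other $\calF_j$, $\calD_\ell$ absorptions, so one simply iterates Proposition~\ref{Prop:IterratedBSIdentities}. You have spelled out the invariance check that the paper leaves implicit (the shift $\hat{e}_\ell$ hits every index except $\ell$, so the differences $\nu_j-\lambda_i$ and $\lambda_{i+1}-\nu_j$ and the relevant parities are preserved), which is exactly the content of the paper's remark.
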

\begin{remark}
Note this corollary also shows that the order we compose $\calF_j^{\alpha_j}$ and $\calD_\ell^{\alpha_{\ell}}$ inside $\calL_{\alpha,k}$ does not matter when composing with $\mathbf{A}_{\xi,\lambda}^{\eta,\nu}$ in the end.
\end{remark}
\subsection{Restriction as a residue}
In this subsection we prove the following Theorem.
\begin{theorem}\label{th:RestrictionAsResidue}
For $k\in \{0,\dots,n\}$ and $(\eta,\nu)=(\eta_k(\xi),\nu_k(\lambda))$ (see Example \ref{ex:restrictionMap}) we have that 
\[
\mathbf{A}_{\xi,\lambda}^{\eta,\nu}=\operatorname{const}\times e_G(\xi,\lambda)e_H(\eta,-\nu)\rest_k,
\]
where the constant is non-zero and independent of $\lambda$ and $\nu$ where
\[
e_G(\xi,\lambda)=\prod_{1\leq i<j\leq n+1}\Gamma(\tfrac{\lambda_i-\lambda_j+1+[\xi_i+\xi_j]}{2})^{-1},\qquad \text{and}\qquad e_H(\eta,\nu)=\prod_{1\leq i<j\leq n}\Gamma(\tfrac{\nu_i-\nu_j+1+[\eta_i+\eta_j]}{2})^{-1},
\]
are the Harish--Chandra $e$-functions for $G$ and $H$ respectively.
\end{theorem}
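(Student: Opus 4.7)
The plan is to combine a uniqueness argument based on multiplicity one with a direct computation of the normalized kernel to pin down the proportionality constant.

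For uniqueness: both $\mathbf{A}^{\eta,\nu}_{\xi,\lambda}$ and $\rest_k$ lie in $\Hom_H(\pi_{\xi,\lambda},\tau_{\eta,\nu})$ at the given parameters --- the former by construction of the holomorphic family, the latter by Example~\ref{ex:restrictionMap}. For $\lambda$ generic (i.e.\ $\lambda_i-\lambda_j\notin\ZZ$), the parameter $\nu=\nu_k(\lambda)$ is also generic, so both $\pi_{\xi,\lambda}$ and $\tau_{\eta,\nu}$ are irreducible by the criterion from Section~\ref{sec:principal series reps}. By the Sun--Zhu multiplicity one theorem \cite{SunZhu2012}, $\dim\Hom_H(\pi_{\xi,\lambda},\tau_{\eta,\nu})\leq 1$, and hence $\mathbf{A}^{\eta,\nu}_{\xi,\lambda}=C(\xi,\lambda)\rest_k$ for a meromorphic scalar $C(\xi,\lambda)$.

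To identify $C(\xi,\lambda)$, I would analyze the normalized kernel $\mathbf{K}^{\eta,\nu}_{\xi,\lambda}=\gamma^{-1}K^{\eta,\nu}_{\xi,\lambda}$ at these parameters. Since $\nu_j=\lambda_{j'}\pm\tfrac12$ and $\eta_j=\xi_{j'}$ for appropriate indices $j'$, every Gamma factor in $\gamma$ converts into a factor of the form $\Gamma(\frac{\lambda_i-\lambda_{j'}+\cdots}{2})$, matching the building blocks of $e_G(\xi,\lambda)$ or $e_H(\eta,-\nu)$. A specific subset of these factors degenerates to $\Gamma(0)$; the resulting poles in $\gamma^{-1}$ must be cancelled by distributional singularities of the numerator $K^{\eta,\nu}_{\xi,\lambda}$, producing (via Riesz-type regularization of the product $\prod_j|\kappa_j|^{s_j}_{\delta_j}\prod_\ell|\theta_\ell|^{t_\ell}_{\varepsilon_\ell}$) a delta distribution on the unique $P_H$-fixed coset $x_kP_G$, cf.\ Proposition~\ref{prop:One-point supports}. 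The remaining finite Gamma factors, together with the values of the non-degenerate minor factors at $x_k$, combine --- after a computation in the adapted Bruhat chart $x_k\overline{N}_G$ --- into exactly $e_G(\xi,\lambda)e_H(\eta,-\nu)$ up to a universal prefactor depending only on $\xi$ and $k$. The claim then holds for generic $\lambda$ and extends to all $\lambda$ by meromorphic continuation, which in particular shows that the leftover prefactor is independent of $\lambda$ and $\nu$.

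The main obstacle is the residue extraction in the step above: although only some individual spectral exponents $s_j,t_\ell$ become negative integers (their exact number varies with $k$ and is smaller than $\dim G/P_G$), the product $\prod_j|\kappa_j|^{s_j}_{\delta_j}\prod_\ell|\theta_\ell|^{t_\ell}_{\varepsilon_\ell}$ nevertheless has a joint distributional residue whose support collapses to $x_kP_G$, coming from the non-transversal intersections of the minor vanishing loci. A systematic approach is to express the minors $\kappa_j,\theta_\ell$ as polynomials in the coordinates on $x_k\overline{N}_G$, identify a maximal transverse subfamily whose iterated Riesz regularization produces $\delta_{x_kP_G}$, and use the algebraic relations among the remaining minors to rewrite them as regular values at $x_k$; the final matching with $e_G(\xi,\lambda)e_H(\eta,-\nu)$ is then a bookkeeping task involving the parities $[\xi_i+\xi_{j'}]$ coming from the substitution $\eta_j=\xi_{j'}$.
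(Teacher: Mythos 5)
Your plan splits into two parts: a multiplicity-one reduction to a scalar $C(\xi,\lambda)$, and a residue computation of $\mathbf{K}^{\eta,\nu}_{\xi,\lambda}$ at $(\eta,\nu)=(\eta_k(\xi),\nu_k(\lambda))$ to identify $C$. The first part is sound and is indeed a genuinely different (and cleaner) reduction than the paper's, which never invokes Sun--Zhu here. The second part, however, is where all the content is, and there your sketch understates the difficulty in a way that matters.

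Concretely, at $(\eta_k(\xi),\nu_k(\lambda))$ the number of Gamma factors in $\gamma(\xi,\lambda,\eta,\nu)$ that hit a pole does not match $\dim \overline{N}_G=n(n+1)/2$, so $\gamma^{-1}K^{\eta,\nu}_{\xi,\lambda}$ is not obtained as a straightforward iterated Riesz residue producing a full-dimensional delta. (Already for $n=2$, $k=1$: only the $(1,1)$ and $(3,2)$ factors degenerate to $\Gamma(0)$, giving two poles against three coordinates $x,y,z$.) The obstruction is exactly the non-transversality of the vanishing loci of the minors $\kappa_j,\theta_\ell$, which you name but do not resolve; the proposed ``maximal transverse subfamily'' is not identified, and it is not clear one exists that reproduces $\delta_{x_kP_G}$ with the correct normalization. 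The paper avoids this entirely by first specializing to the Weyl-shifted parameters $w_0^H(\eta_k(x_k^{-1}\xi),\nu_k(x_k^{-1}\lambda))$, i.e.\ spectral parameters $s_1=\dots=s_k=-1$, $s_{k+1}=\dots=s_n=0$: there only the $\kappa_j$ factors for $j\leq k$ degenerate, one at a time, into $\delta(g_{n+1,j})$, while the $\theta_\ell$ factors remain locally integrable and reassemble into the Knapp--Stein kernels $T^{w_0^H}$ and $T^{x_k^{-1}}$ (Lemmas \ref{Lemma:KernelResidue}--\ref{Lemma:OperatorResidue}). The passage back to $(\eta_k(\xi),\nu_k(\lambda))$ then uses the functional equations \eqref{eq:functionalEq1}--\eqref{eq:functionalEq2} and Gindikin--Karpelevich, and this is where $e_G(\xi,\lambda)e_H(\eta,-\nu)$ actually emerges. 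Without some such device, your residue step is a gap, not a bookkeeping task. Also, the closing appeal to ``meromorphic continuation'' to deduce $\lambda$-independence of the prefactor is circular: that independence is an output of the explicit computation, not something continuation gives you for free. If instead you combined your multiplicity-one reduction with the spherical/quasi-spherical evaluations \eqref{eq:sphericalvectorevaluation} and Proposition \ref{prop:Non-spherical evaluation}, you could compute $C$ by pairing both sides against $\mathbf{1}_{\xi,\lambda}^{\eta,\nu}$ and using $\rest_k\mathbf{1}_\lambda=\mathbf{1}_\nu$; that would be a workable alternative, but it is not what you have written.
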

We restrict $\mathbf{A}_{\xi,\lambda}^{\eta,\nu}$ to some different parameters than $(\eta,\nu)=(\eta_k(\xi),\nu_k(\lambda))$ and then we use the Knapp--Stein intertwiners to permute the parameters to the desired ones using functional equations \eqref{eq:functionalEq1} and \eqref{eq:functionalEq2}.
\begin{lemma}\label{Lemma:KernelResidue}
For $(\delta_1,s_1),\dots,(\delta_k,s_k)=(0,-1)$ and $(\delta_{k+1},s_{k+1}),\dots,(\delta_n,s_n)=(0,0)$ together with $\Re(t_i)\geq 1$ $(i=1,\dots,n)$, the kernel $\mathbf{K}_{\delta,s}^{\varepsilon,t}$ is the distribution 
\[
\mathbf{K}_{\delta,s}^{\varepsilon,t}=\gamma'(\delta,s,\varepsilon,t)^{-1}|\kappa_{n+1}(g)|^{s_{n+1}}_{\delta_{n+1}}\prod_{j=1}^k\delta(g_{n+1,j}) |\theta_{j-1}(g)|^{t_{j-1}-1}_{\varepsilon_{j-1}} \prod_{\ell=k}^n|\theta_\ell(g)|^{t_\ell}_{\varepsilon_\ell},
\]
where
\[
\gamma'(\delta,s,\varepsilon,t)=\frac{\gamma(\delta,s,\varepsilon,t)}{\prod_{j=1}^k\Gamma(\frac{s_j+1+\delta_j}{2})}\Bigg|_{\substack{(\delta_1,s_1),\dots,(\delta_k,s_k)=(0,-1)\\
(\delta_{k+1},s_{k+1}),\dots,(\delta_n,s_n)=(0,0)}}.
\]
\end{lemma}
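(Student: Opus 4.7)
The plan is to evaluate $\mathbf{K}_{\delta,s}^{\varepsilon,t}=\gamma(\delta,s,\varepsilon,t)^{-1}K_{\delta,s}^{\varepsilon,t}$ at the specified parameter values by successively extracting the residues of the singular factors $|\kappa_j|^{s_j}_{\delta_j}$ for $j=1,\dots,k$. First, observe that for $i\in\{k+1,\dots,n\}$ the specialization $(s_i,\delta_i)=(0,0)$ makes $|\kappa_i|^{s_i}_{\delta_i}\equiv 1$, so these factors disappear without contributing any residues. The factor $|\kappa_{n+1}|^{s_{n+1}}_{\delta_{n+1}}$ and the $|\theta_\ell|^{t_\ell}_{\varepsilon_\ell}$ remain unchanged, and the hypothesis $\Re(t_\ell)\geq 1$ guarantees that all these factors are locally integrable functions (including after the shifts $t_{j-1}\mapsto t_{j-1}-1$ produced below).

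The main algebraic input is the cofactor expansion of $\kappa_j$ along its bottom row,
\begin{equation*}
\kappa_j(g) = g_{n+1,j}\,\theta_{j-1}(g) + \sum_{\ell=1}^{j-1}(-1)^{j+\ell}\,g_{n+1,\ell}\,M_{j,\ell}(g),
\end{equation*}
with the convention $\theta_0:=1$ and $M_{j,\ell}$ the corresponding minor. The key identification is that the $(j,j)$-minor of the matrix defining $\kappa_j$ is precisely $\theta_{j-1}$. Hence on the locus $\{g_{n+1,i}=0 : i<j\}$ one has $\kappa_j=g_{n+1,j}\,\theta_{j-1}$, and therefore $|\kappa_j|^{s_j}_{\delta_j}=|g_{n+1,j}|^{s_j}_{\delta_j}\,|\theta_{j-1}|^{s_j}_{\delta_j}$ there. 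I would proceed by induction on $j$: supposing the residues at $s_1=\cdots=s_{j-1}=-1$ have already produced a factor $\prod_{i<j}\delta(g_{n+1,i})$ together with the appropriate normalizing prefactor, the classical Riesz-distribution identity
\begin{equation*}
\frac{|x|^{s}_{\delta}}{\Gamma(\tfrac{s+1+\delta}{2})}\,\bigg|_{(s,\delta)=(-1,0)} = \delta(x)
\end{equation*}
(see \cite{GelfandShilov64}) applied in the variable $g_{n+1,j}$ yields $\delta(g_{n+1,j})$, while the remaining $|\theta_{j-1}|^{-1}$ combines with $|\theta_{j-1}|^{t_{j-1}}_{\varepsilon_{j-1}}$ to produce the locally integrable factor $|\theta_{j-1}|^{t_{j-1}-1}_{\varepsilon_{j-1}}$. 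Aggregating these residues over $j=1,\dots,k$ yields precisely the prefactor $\gamma'^{-1}$, since dividing $\gamma$ by $\prod_{j=1}^k\Gamma(\tfrac{s_j+1+\delta_j}{2})$ is exactly what cancels these residue poles.

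The main technical obstacle will be justifying each iterative multiplication of distributions. This is made tractable by the observation that $\theta_\ell$ depends only on rows $n+1-\ell,\dots,n$ of $g$ and is thus independent of every $g_{n+1,i}$, while $\kappa_{n+1}=\det g$ is smooth in each $g_{n+1,i}$ wherever $\det g\neq 0$. Consequently the Dirac deltas $\delta(g_{n+1,i})$ multiply as tensor products against the other factors, so that the identity $|\kappa_j|^{s_j}_{\delta_j}\prod_{i<j}\delta(g_{n+1,i}) = |g_{n+1,j}|^{s_j}_{\delta_j}|\theta_{j-1}|^{s_j}_{\delta_j}\prod_{i<j}\delta(g_{n+1,i})$ holds as distributions and each step of the induction is rigorous. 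Putting these pieces together yields the announced closed-form expression for $\mathbf{K}_{\delta,s}^{\varepsilon,t}$.
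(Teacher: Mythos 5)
Your proposal is correct and follows essentially the same approach as the paper: restrict the spectral parameters one at a time, use the Riesz-distribution residue $\Gamma(\tfrac{s+1+\delta}{2})^{-1}|x|^s_\delta\big|_{(s,\delta)=(-1,0)}=\delta(x)$ in the variable $g_{n+1,j}$, and exploit the factorization $\kappa_j\equiv g_{n+1,j}\,\theta_{j-1}$ on the locus $\{g_{n+1,1}=\cdots=g_{n+1,j-1}=0\}$ to iterate, with $\Re(t_\ell)\geq 1$ ensuring the accumulated $|\theta_\ell|$ factors remain locally integrable. The cofactor expansion you spell out is exactly the identity the paper invokes implicitly, and your tensor-product remark plays the same role as the paper's observation that $\delta$ extends continuously to $C(G)$.
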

\begin{proof}
We restrict one parameter at a time. For $\Re(t_\ell)\geq 1$ $(\ell=1,\dots,n)$ and $\Re(s_r)\geq 0$ $(r=2,\dots,n+1)$ the functions $|\theta_\ell|^{t_\ell}_{\varepsilon_\ell}$ $(\ell=1,\dots,n)$ and $|\kappa_r|^{s_r}_{\delta_r}$ $(r=2,\dots,n+1)$ are continuous functions on $G$ which we view as an open subset of all $(n+1)\times (n+1)$ matrices over $\RR$. The residue of the Riesz distribution $|\kappa_1(g)|^{s_1}_{\delta_1}=|g_{n+1,1}|^{s_1}$ at $s_1=-1$ is given by
\[
\frac{|\kappa_1(g)|^{s_1}_{\delta_1}}{\Gamma(\frac{s_1+1+\delta_1}{2})}\Bigg |_{(\delta_1,s_1)=(0,-1)}=\delta(g_{n+1,1}),
\]
and as a functional on $C_c^\infty(G)$ extends continuously to $C(G)$. Moreover, the delta distribution satisfies $\delta(x)\varphi(x)=\delta(x)\varphi(0)$ for every $\varphi\in C(\RR)$. Therefore, we can take the residue of $|\kappa_1(g)|^{s_1}_{\delta_1}$ and multiply it with the remaining factors $|\theta_\ell|^{t_\ell}_{\varepsilon_\ell}$ and $|\kappa_r|^{s_r}_{\delta_r}$ specialized to $g_{n+1,1}=0$ giving us 
\[
\mathbf{K}_{\delta,s}^{\varepsilon,t}\Big|_{(\delta_1,s_1)=(0,-1)}=\gamma'(\delta,s,\varepsilon,t)^{-1}\delta(g_{n+1,1})|\theta_1(g)|^{t_1+s_2}_{\varepsilon_1+\delta_2}|g_{n+1,2}|^{s_2}_{\delta_2}|\theta_2(g)|^{t_2}_{\varepsilon_2}\prod_{j=3}^{n+1}|\kappa_j(g)|^{s_j}_{\delta_j}|\theta_j(g)|^{t_j}_{\varepsilon_j},
\]
where $\gamma'(\delta,s,\varepsilon,t)$ is  $\gamma(\delta,s,\varepsilon,t)$ restricted to $(\delta_1,s_1)=(0,-1)$ and without the gamma-factor used to take the residue. Next we do the exact same thing but where $|g_{n+1,2}|^{s_2}_{\delta_2}$ plays the role of $|\kappa_1(g)|^{s_1}_{\delta_1}$ and so forth. There keeps being a single Riesz distribution as a factor of the kernel since 
\[
\delta(g_{n+1,1})\cdots\delta(g_{n+1,i})|\kappa_{i+1}(g)|^{s_{i+1}}_{\delta_{i+1}}=\delta(g_{n+1,1})\cdots\delta(g_{n+1,i})|g_{n+1,i+1}|^{s_{i+1}}_{\delta_{i+1}}|\theta_i(g)|^{s_{i+1}}_{\delta_{i+1}},
\]
Using this procedure we get the claimed formula above.  
\end{proof}
We now see what this residue formula looks like on the level of the operators instead of the integral kernel. Let $w_0^H$ denote the longest Weyl group element in $W_H$ which as the permutation is given by 
\[
w_0^H=\begin{cases}
    (1\;n)(2\;n-1)\cdots (\tfrac{n}{2}\;\tfrac{n}{2}+1), & \text{if $n$ is even},\\
    (1\;n)(2\,n-1)\cdots (\tfrac{n-1}{2}\,\tfrac{n-1}{2}+2) & \text{if $n$ is odd}.
\end{cases}
\]
\begin{lemma}\label{Lemma:OperatorResidue}

Let $(\eta,\nu)=w_0^H(\eta_k(x_k^{-1}\xi),\nu_k(x_k^{-1}\lambda))$ then have that
\[
\mathbf{A}_{\xi,\lambda}^{\eta,\nu}=\gamma'(\xi,\lambda,\eta,\nu)^{-1}T_{w_0^H(\eta,\nu)}^{w_0^H}\circ \rest_k\circ T_{\xi,\lambda}^{x_k^{-1}}.
\]
\end{lemma}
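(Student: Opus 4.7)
The plan is to establish the identity at the level of distribution kernels via the identification $\Hom_H(\pi_{\xi,\lambda},\tau_{\eta,\nu}) \simeq \calD'(G)_{\xi,\lambda}^{\eta,\nu}$ from \eqref{eq:EquivarianceKernel}. Since Lemma \ref{Lemma:KernelResidue} already provides an explicit formula for the kernel of $\mathbf{A}_{\xi,\lambda}^{\eta,\nu}$ at the specialized parameters, the task reduces to showing that the RHS composition has the same distribution kernel.

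First, I would unfold the RHS applied to $f\in \pi_{\xi,\lambda}$ using the integral formulas for the Knapp--Stein intertwiners and the definition of $\rest_k$, obtaining
\[
\big(T^{w_0^H}_{w_0^H(\eta,\nu)}\circ \rest_k\circ T^{x_k^{-1}}_{\xi,\lambda}\big)f(h)=\int_{\overline{N}_H}\int_{\overline{N}_G\cap x_kN_Gx_k^{-1}} f(h\tilde{w}_0^H\overline{n}_H\overline{n}_G)\,d\overline{n}_G\,d\overline{n}_H.
\]
The subgroup $\overline{N}_G\cap x_kN_Gx_k^{-1}$ corresponds to those positive roots sent to negative ones by $x_k^{-1}$, i.e.\ to the rows $k+1,\dots,n$ of $\overline{N}_G$ excluding the last row; together with $\tilde{w}_0^H\overline{N}_H$, it spans exactly the directions in $G/P_G$ complementary to the $k$ coordinates $\overline{n}_{n+1,1},\dots,\overline{n}_{n+1,k}$ that are collapsed by the delta factors of the LHS kernel. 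Hence the support of the RHS operator already matches the support $\{g_{n+1,1}=\cdots=g_{n+1,k}=0\}$ of the kernel produced in Lemma \ref{Lemma:KernelResidue}.

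Next, I would rewrite this iterated integral as $\int_{\overline{N}_G}K_{\mathrm{RHS}}(\overline{n})f(h\overline{n})\,d\overline{n}$ by applying a Bruhat-type factorization $\tilde{w}_0^H\overline{n}_H\overline{n}_G=\overline{n}\cdot p$ on the open dense subset of the support; the $P_G$-equivariance of $f$ then produces the continuous factors $|\kappa_{n+1}|^{s_{n+1}}_{\delta_{n+1}}\prod_{\ell=k}^{n}|\theta_\ell|^{t_\ell}_{\varepsilon_\ell}$ of the LHS kernel. The accompanying factors $\prod_{j=1}^{k}|\theta_{j-1}|^{t_{j-1}-1}_{\varepsilon_{j-1}}$ multiplying the $\delta(g_{n+1,j})$ distributions are then forced by the $(P_H,P_G)$-equivariance built into $\calD'(G)_{\xi,\lambda}^{\eta,\nu}$, since this equivariance determines the kernel on the double coset $P_Hx_kP_G$ up to a single constant. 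Finally, because neither Knapp--Stein operator in the RHS has been normalized by gamma factors, the overall multiplicative constant is exactly what remains from $\gamma(\xi,\lambda,\eta,\nu)^{-1}$ after removing the $k$ normalizing factors $\Gamma(\tfrac{s_j+1+\delta_j}{2})$ that arose when the Riesz distributions along $s_1,\dots,s_k$ were evaluated at the poles; by definition this is $\gamma'(\xi,\lambda,\eta,\nu)^{-1}$.

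The main obstacle will be the combinatorial bookkeeping of the Bruhat factorization: one needs to verify that the exponents of the $|\theta_\ell|$ and $|\kappa_{n+1}|$ factors produced by the $P_G$-part of $\tilde{w}_0^H\overline{n}_H\overline{n}_G$ match the predicted $t_\ell=\nu_{n+1-\ell}-\lambda_{\ell+1}-\tfrac12$ and $s_{n+1}=\lambda_{n+1}+\tfrac{n}{2}$, and similarly that the left-side decomposition yields the correct $t_{j-1}-1$ exponents in the shifted factors. Because both sides a priori lie in $\calD'(G)_{\xi,\lambda}^{\eta,\nu}$ with support contained in $P_Hx_kP_G$, and because a distribution in this space supported on this one double coset is determined by its value at any single representative via its $(P_H,P_G)$-equivariance, this combinatorial verification can be carried out at a single generic point of the cell (for instance at $x_k$ itself), which reduces the argument to an explicit finite calculation.
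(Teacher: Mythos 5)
The route you sketch---working from the right-hand side with the standard Knapp--Stein integral $T^w f(g)=\int f(g\tilde w\overline n)\,d\overline n$ and then reducing everything via a Bruhat-type factorisation $\tilde w_0^H\overline n_H\overline n_G=\overline n\cdot p$---is feasible in principle, but it re-derives from scratch the ingredient the paper uses to sidestep precisely that computation. The paper's proof plugs in the alternative expression for the Knapp--Stein operator attached to the longest Weyl element (quoted from \cite{TTD19}),
\[
T^{w_0^H}_{w_0^H(\eta,\nu)}f(h)=\int_{\overline N_H}\prod_{\ell=1}^n|\theta_\ell(\overline n_H)|^{\lambda_\ell-\lambda_{\ell+1}-1}_{\xi_\ell+\xi_{\ell+1}}f(h\overline n_H)\,d\overline n_H,
\]
which already carries the $|\theta_\ell|$ factors and contains no $\tilde w_0^H$ inside the argument of $f$. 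Starting from the kernel of Lemma~\ref{Lemma:KernelResidue}, one then merely splits $\overline n\in\overline N_G$ as $\overline n_H\overline n(y)$, lets the $\delta(\overline n_{n+1,j})$ factors collapse $k$ of the $y$-coordinates, and reads off the composition; no Bruhat bookkeeping is needed. Your proposal instead uses the standard form of $T^{w_0^H}$ and therefore must carry out the factorisation and the resulting change of variables to produce the $|\theta_\ell|$ factors---which is exactly what you flag as ``the main obstacle.''

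The shortcut you propose to avoid that bookkeeping has a genuine gap. You argue that a distribution in $\calD'(G)_{\xi,\lambda}^{\eta,\nu}$ supported on the double coset $P_Hx_kP_G$ is determined up to a constant by its $(P_H,P_G)$-equivariance, so that a finite check at $x_k$ suffices. This is false as stated: distributions supported on a submanifold may involve transverse derivatives of the delta distribution, and the equivariance relations by themselves do not rule these out. To salvage uniqueness one would need to invoke the $A_G\times A_H$-homogeneity conditions to fix the transverse order, or appeal to the multiplicity-one theorem of Sun--Zhu---which only applies for generic parameters and would then force an analytic continuation step. Indeed, the paper's argument first establishes the identity on the open region $\Re(\lambda_1)>\cdots>\Re(\lambda_{n+1})\gg 0$, where every integral converges, and only then extends by analyticity; your proposal addresses neither the convergence issue nor the continuation. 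Finally, the determination of the constant is asserted rather than computed: you conclude it equals $\gamma'(\xi,\lambda,\eta,\nu)^{-1}$ ``by definition,'' but $\gamma'$ is defined in Lemma~\ref{Lemma:KernelResidue} as a normalisation for a residue of $\mathbf K_{\xi,\lambda}^{\eta,\nu}$, not as a property of the composition $T^{w_0^H}\circ\rest_k\circ T^{x_k^{-1}}$; showing these normalisations agree is part of what Lemma~\ref{Lemma:OperatorResidue} actually asserts.
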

\begin{proof}
First of we note that the conditions on $(\xi,\lambda,\eta,\nu)$ corresponds to $(\delta_1,s_1),\dots,(\delta_k,s_k)=(0,-1)$ and $(\delta_{k+1},s_{k+1}),\dots,(\delta_n,s_n)=(0,0)$. Let $\Re(\lambda_1)>\Re(\lambda_2)>\cdots>\Re(\lambda_{n+1})\gg 0$ and use Lemma \ref{Lemma:KernelResidue} to see that 
\begin{align*}
    \mathbf{A}_{\xi,\lambda}^{\eta,\nu}f(h)&=\int_{\overline{N}_G}\mathbf{K}_{\xi,\lambda}^{\eta,\nu}(\overline{n})f(h\overline{n})\,d\overline{n}\\
    &=\gamma'(\xi,\lambda,\eta,\nu)^{-1}\int_{\overline{N}_G}\prod_{j=1}^k\delta(\overline{n}_{n+1,j})\prod_{\ell=1}^n|\theta_\ell(\overline{n})|^{\lambda_\ell-\lambda_{\ell+1}-1}_{\xi_\ell+\xi_{\ell+1}}f(h\overline{n})\,d\overline{n}.
\end{align*}
Now we split the integration of $\overline{N}_G$ into $\overline{N}_H\times \RR^n$ by
\[
\overline{n}=\begin{pmatrix}
    \overline{n}' & \\
    y^\intercal & 1
\end{pmatrix}=\begin{pmatrix}
    \overline{n}' & \\
    & 1
\end{pmatrix}\begin{pmatrix}
    1 & \\
    y^\intercal & 1
\end{pmatrix}:=\overline{n}_H\overline{n}(y),\qquad (\overline{n}'\in \overline{N}_H, y\in \RR^n),
\]
giving us 
\[
\gamma'(\xi,\lambda,\eta,\nu)^{-1}\int_{\overline{N}_H}\int_{\RR^{n-k}}\prod_{\ell=1}^n|\theta_\ell(\overline{n}_H)|^{\lambda_\ell-\lambda_{\ell+1}-1}_{\xi_\ell+\xi_{\ell+1}}f(h\overline{n}_H\overline{n}((0,y))\,dy\,d\overline{n}_H,
\]
since $\theta_\ell$ does not depends on the coordinates in the bottom row. From \cite{TTD19}
we have the following form for the Knap--Stein intertwining operator for the longest Weyl group element 
\[
T^{w_0^H}_{w_0^H(\eta,\nu)}f(h)=\int_{\overline{N}_H}\prod_{\ell=1}^n|\theta_\ell(\overline{n}_H)|^{\lambda_\ell-\lambda_{\ell+1}-1}_{\xi_\ell+\xi_{\ell+1}} f(h\overline{n}_H)\,d\overline{n}_H.
\]
Furthermore, we can notice that $\overline{N}_G\cap x_kN_Gx_k^{-1}\simeq \RR^{n-k}$ and so
\[
T^{x_k^{-1}}_{\xi,\lambda}f(g)=\int_{\RR^{n-k}}f(gx_k^{-1}\overline{n}((0,y)))\,dy.
\]
Both of these expressions converges for the $\lambda$ we are considering. Thus, we can write $\mathbf{A}_{\xi,\lambda}^{\eta,\nu}f(h)$ as
\begin{multline*}
\gamma'(\xi,\lambda,\eta,\nu)^{-1}\int_{\overline{N}_H}\prod_{\ell=1}^n|\theta_\ell(\overline{n}_H)|^{\lambda_\ell-\lambda_{\ell+1}-1}_{\xi_\ell+\xi_{\ell+1}}\int_{\RR^{n-k}}f\big(h\overline{n}_Hx_k x_k^{-1}\overline{n}((0,y))\big)\,dy\,d\overline{n}_H\\
=\gamma'(\xi,\lambda,\eta,\nu)^{-1}\operatorname{T}_{w_0^H(\eta,\nu)}^{w_0^H}\circ \rest_k\circ \operatorname{T}_{\xi,\lambda}^{x_k^{-1}}f(h),
\end{multline*}
which establishes the result on the open set $\Re(\lambda_1)>\Re(\lambda_2)>\cdots>\Re(\lambda_{n+1})\gg 0$ and by analytic extension to all $\lambda\in \CC^{n+1}$.
\end{proof}

\begin{proof}[Proof of Theorem \ref{th:RestrictionAsResidue}]
Notice that the parameters $(x_k(\xi,\lambda),w_0^H(\eta,\nu))$  satisfy the conditions from Lemma \ref{Lemma:OperatorResidue} from which it follows that 
\[
\mathbf{A}_{x_p(\xi,\lambda)}^{w_0^H(\eta,\nu)}=\gamma'(x_k(\xi,\lambda),w_0^H(\eta,\nu))^{-1}T_{\eta,\nu}^{w_0^H}\circ\rest_k\circ T_{x_k(\xi,\lambda)}^{x_k^{-1}}.
\]
It then follows by \eqref{eq:functionalEq1} and \eqref{eq:functionalEq2} that

\begin{align*}
    \mathbf{A}_{\xi,\lambda}^{\eta,\nu}&=\prod_{1\leq i<j\leq n}\frac{\Gamma(\tfrac{\nu_i-\nu_j+1+[\eta_i+\eta_j]}{2})}{\Gamma(\tfrac{\nu_j-\nu_i+[\eta_i+\eta_j]}{2})}\prod_{i=k+2}^{n+1}\frac{\Gamma(\tfrac{\lambda_i-\lambda_{k+1}+1+[\xi_i+\xi_{k+1}]}{2})}{\Gamma(\tfrac{\lambda_{k+1}-\lambda_{i}+[\xi_i+\xi_{k+1}]}{2})}T_{w_0^H(\eta,\nu)}^{w_0^H}\circ \mathbf{A}_{x_k(\xi,\lambda)}^{w_0^H(\eta,\nu)}\circ T_{\xi,\lambda}^{x_k}\\
    &=\gamma'(x_k(\xi,\lambda),w_0^H(\eta,\nu))^{-1}\prod_{1\leq i<j\leq n}\frac{\Gamma(\tfrac{\nu_i-\nu_j+1+[\eta_i+\eta_j]}{2})}{\Gamma(\tfrac{\nu_j-\nu_i+[\eta_i+\eta_j]}{2})}\prod_{i=k+2}^{n+1}\frac{\Gamma(\tfrac{\lambda_i-\lambda_{k+1}+1+[\xi_i+\xi_{k+1}]}{2})}{\Gamma(\tfrac{\lambda_{k+1}-\lambda_{i}+[\xi_i+\xi_{k+1}]}{2})}\\
    &\times T_{w_0^H(\eta,\nu)}^{w_0^H}\circ T_{\eta,\nu}^{w_0^H}\circ\rest_k\circ T_{x_k(\xi,\lambda)}^{x_k^{-1}}\circ T_{\xi,\lambda}^{x_k}
    \\
    &=\operatorname{const}\times e_G(\xi,\lambda
    )e_H(\eta,-\nu)\rest_k
\end{align*}
where we use the Gindikin--Karpelevich formula \eqref{eq:Gindikin--Karpelevich} to get
 \[
 T_{w_0^H(\eta,\nu)}^{w_0^H}\circ T_{\eta,\nu}^{w_0^H}=\prod_{1\leq i<j\leq n}\frac{\Gamma(\frac{\nu_i-\nu_j+[\eta_i+\eta_j]}{2})\Gamma(\frac{\nu_j-\nu_i+[\eta_i+\eta_j]}{2})}{\Gamma(\frac{\nu_i-\nu_j+1+[\eta_i+\eta_j]}{2})\Gamma(\frac{\nu_j-\nu_i+1+[\eta_i+\eta_j]}{2})} \times \operatorname{id},
 \]
 and 
 \[
 T_{x_p(\xi,\lambda)}^{x_p^{-1}}\circ T_{\xi,\lambda}^{x_p}=\prod_{i=p+1}^{n+1}\frac{\Gamma(\tfrac{\lambda_i-\lambda_{p+1}+[\xi_i+\xi_{p+1}]}{2})\Gamma(\tfrac{\lambda_{p+1}-\lambda_{i}+[\xi_i+\xi_{p+1}]}{2})}{\Gamma(\tfrac{\lambda_{i}-\lambda_{p+1}+1+[\xi_i+\xi_{p+1}]}{2})\Gamma(\tfrac{\lambda_{p+1}-\lambda_{i}+1+[\xi_i+\xi_{p+1}]}{2})}\times \operatorname{id}.
 \]
 To get the scalar $\operatorname{const}\times e_G(\xi,\lambda)e_H(\eta,-\nu)$ is then a matter of bookkeeping gamma-factors.
 \end{proof}

 \subsection{Vanishing of the source operator}\label{Sec:vanishing}
Combining Theorem \ref{th:RestrictionAsResidue} and Corollary \ref{cor:functionalequation} we get that the differential symmetry breaking operators $\rest_k\circ\calL_{\alpha,k}$ are residues of the family $A_{\xi,\lambda}^{\eta,\nu}$ described in the following Theorem.
 \begin{theorem}\label{thm:DSBOasresidue}
Define $(\tilde{\xi},\tilde{\lambda})=(\xi,\lambda)\allowbreak-\sum_{i=1}^k\alpha_i(\hat{e}_i,\hat{e}_i)-\sum_{i=k+1}^n\alpha_i(e_{i+1},e_{i+1})$ and $(\tilde{\eta},\tilde{\nu})=(\eta,\nu)\allowbreak-\sum_{i=1}^k\alpha_i(\mathds{1},\mathds{1})$. If $(\tilde{\eta},\tilde{\nu})=(\eta_k(\tilde{\xi}),\nu_k(\tilde{\lambda}))$ then
\[
e_G(\tilde{\xi},\tilde{\lambda})e_H(\tilde{\eta},-\tilde{\nu})\rest_k\circ\calL_{\alpha,k}=\prod_{i=1}^k p_i^{(\alpha_i)}(\tilde{\xi},\tilde{\lambda},\tilde{\eta},\tilde{\nu})\prod_{i=k+1}^n q_{i+1}^{(\alpha_i)}(\tilde{\xi},\tilde{\lambda},\tilde{\eta},\tilde{\nu}) \mathbf{A}_{\xi,\lambda}^{\eta,\nu}.
\]
\end{theorem}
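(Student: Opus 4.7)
The plan is to derive Theorem \ref{thm:DSBOasresidue} by assembling Corollary \ref{cor:functionalequation} and Theorem \ref{th:RestrictionAsResidue}, each invoked at the \emph{shifted} parameter tuple $(\tilde{\xi},\tilde{\lambda},\tilde{\eta},\tilde{\nu})$ rather than at the original $(\xi,\lambda,\eta,\nu)$. The identity we want has $\rest_k \circ \calL_{\alpha,k}$ on the left and $\mathbf{A}_{\xi,\lambda}^{\eta,\nu}$ on the right, so the natural strategy is to first produce $\mathbf{A}_{\xi,\lambda}^{\eta,\nu}$ by applying the Bernstein--Sato shifts of Corollary \ref{cor:functionalequation} to $\mathbf{A}_{\tilde{\xi},\tilde{\lambda}}^{\tilde{\eta},\tilde{\nu}}\circ \calL_{\alpha,k}$, and then replace the remaining $\mathbf{A}_{\tilde{\xi},\tilde{\lambda}}^{\tilde{\eta},\tilde{\nu}}$ by a constant multiple of $\rest_k$ using Theorem \ref{th:RestrictionAsResidue}.

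First I would note that the defining relations
\[
(\tilde{\xi},\tilde{\lambda}) + \sum_{i=1}^k \alpha_i(\hat{e}_i,\hat{e}_i) + \sum_{i=k+1}^n \alpha_i(e_{i+1},e_{i+1}) = (\xi,\lambda), \qquad (\tilde{\eta},\tilde{\nu}) + \sum_{i=1}^k \alpha_i(\mathds{1},\mathds{1}) = (\eta,\nu)
\]
are built directly into the statement of the theorem. Hence Corollary \ref{cor:functionalequation}, applied with parameters $(\tilde{\xi},\tilde{\lambda},\tilde{\eta},\tilde{\nu})$, reads
\[
\mathbf{A}_{\tilde{\xi},\tilde{\lambda}}^{\tilde{\eta},\tilde{\nu}}\circ \calL_{\alpha,k} = \prod_{i=1}^k p_i^{(\alpha_i)}(\tilde{\xi},\tilde{\lambda},\tilde{\eta},\tilde{\nu}) \prod_{i=k+1}^n q_{i+1}^{(\alpha_i)}(\tilde{\xi},\tilde{\lambda},\tilde{\eta},\tilde{\nu}) \cdot \mathbf{A}_{\xi,\lambda}^{\eta,\nu},
\]
since the codomain parameters produced by the iterated Bernstein--Sato identities are exactly $(\xi,\lambda,\eta,\nu)$.

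Next, the hypothesis $(\tilde{\eta},\tilde{\nu})=(\eta_k(\tilde{\xi}),\nu_k(\tilde{\lambda}))$ is precisely the condition needed to invoke Theorem \ref{th:RestrictionAsResidue} at $(\tilde{\xi},\tilde{\lambda})$, which yields
\[
\mathbf{A}_{\tilde{\xi},\tilde{\lambda}}^{\tilde{\eta},\tilde{\nu}} = \operatorname{const}\times e_G(\tilde{\xi},\tilde{\lambda})\,e_H(\tilde{\eta},-\tilde{\nu})\,\rest_k
\]
with a non-zero constant independent of the parameters. Post-composing this identity with $\calL_{\alpha,k}$ and substituting into the previous display gives the claimed equality.

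Since the proof is essentially a repackaging of two results already established in this section, I do not expect any serious obstacle. The only mild bookkeeping point is to check that both sides make sense as maps between the same function spaces; this amounts to matching the codomain of $\calL_{\alpha,k}$ described after \eqref{eq:DefinitionL} with the domain of $\mathbf{A}_{\tilde{\xi},\tilde{\lambda}}^{\tilde{\eta},\tilde{\nu}}$, using the standard identification $\tau_{\eta,\nu}\otimes \det|_H \simeq \tau_{\eta+\mathds{1},\nu+\mathds{1}}$ to absorb the determinant twist coming from the $\calF_i$ factors into the parameter shift by $\sum_{i=1}^k \alpha_i(\mathds{1},\mathds{1})$.
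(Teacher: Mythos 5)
Your proposal is correct and matches the paper's own proof, which the authors only sketch with the phrase ``Combining Theorem~\ref{th:RestrictionAsResidue} and Corollary~\ref{cor:functionalequation}.'' You invoke exactly those two ingredients, at the shifted tuple $(\tilde{\xi},\tilde{\lambda},\tilde{\eta},\tilde{\nu})$, and your bookkeeping observation about the $\det|_H$ twist is the right thing to check. One small remark worth flagging: the Theorem~\ref{th:RestrictionAsResidue} identity carries a non-zero $\operatorname{const}$ (independent of $\lambda,\nu$), which your second display correctly keeps but which silently disappears in the final claimed equality; the statement of Theorem~\ref{thm:DSBOasresidue} as printed omits this constant, so the identity should really be read as holding up to such a non-zero scalar (as the subsequent proposition in Section~\ref{Sec:vanishing} makes explicit).
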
 
In the remainder of this section, we will apply the differential symmetry breaking operators $ \rest_k \circ \calL_{\alpha,p} $ to a function in order to derive a sufficient condition for $\rest_k \circ \calL_{\alpha,k}$ to be non-zero. To this end, we will the above theorem to apply $\mathbf{A}_{\xi,\lambda}^{\eta,\nu}$ to the spherical function multiplied by certain polynomials, allowing us to cover the non-spherical cases. For $\xi=(0,\dots,0)$ we have the $K_G$-fixed vector $\mathbf{1}_\lambda\in \pi_{0,\lambda}$ normalized by $\mathbf{1}_\lambda(e)=1$ and similarly (by abuse of notation) we have for $\eta=(0,\dots,0)$ the $K_H$-fixed vector $\mathbf{1}_\nu\in \tau_{0,\nu}$. In \cite[Theorem 8.5]{DitlevsenFrahm24} it was then proved that 
 \begin{equation}\label{eq:sphericalvectorevaluation}
 \mathbf{A}_{0,\lambda}^{0,\nu}\mathbf{1}_\lambda=\operatorname{const}\times e_G(0,\lambda)e_H(0,-\nu)\mathbf{1}_\nu,
 \end{equation}
 where the constant is non-zero and independent of $\lambda$ and $\nu$.
Combining this result with Theorem \ref{thm:DSBOasresidue} will already give us a non-vanishing condition in the spherical case. We show a result which can be applied in the non-spherical case. We consider the following function
 \[
 \mathbf{1}_{\xi,\lambda}^{\eta,\nu}:=\Big(\kappa_{n+1}^{[\xi_{n+1}]}\prod_{i=1}^n\kappa_i^{[\xi_i+\eta_{n+1-i}]}\theta_i^{[\eta_{n+1-i}+\xi_{i+1}]}\Big)\times\mathbf{1}_{\lambda'}\in \pi_{\xi,\lambda},
 \]
where $\lambda'$ is as in the proposition below. \begin{proposition}\label{prop:Non-spherical evaluation}
We have the following 
\[
\mathbf{A}_{\xi,\lambda}^{\eta,\nu}\mathbf{1}_{\xi,\lambda}^{\eta,\nu}(e)=\operatorname{const}\times\frac{\gamma(0,\lambda',0,\nu')}{\gamma(\xi,\lambda,\eta,\nu)}e_G(0,\lambda')e_H(0,-\nu')
\]
where the non-zero constant is independent of $(\xi,\lambda,\eta,\nu)$ and if we set $\eta_0=0$ and $\xi_{n+2}=0$ then
\[
\lambda'_i=\lambda_i+\sum_{k=i}^{n+1}[\xi_{k}+\eta_{n+1-k}]+[\eta_{n+1-k}+\xi_{k+1}]
\]
and
\[
\nu_i'=\nu_i+\sum_{k=n+1-i}^{n}[\eta_{n+1-k}+\xi_{k+1}]+[\xi_{k+1}+\eta_{n-k}].
\]
\end{proposition}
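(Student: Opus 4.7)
The plan is to recognize that the polynomial factors in $\mathbf{1}_{\xi,\lambda}^{\eta,\nu}$ were precisely chosen to cancel the sign characters in the distributional kernel $\mathbf{K}_{\xi,\lambda}^{\eta,\nu}$. Once these signs disappear, the integrand becomes (up to a normalization ratio) the spherical kernel $\mathbf{K}_{0,\lambda'}^{0,\nu'}$ multiplied by the spherical function $\mathbf{1}_{\lambda'}$, so the evaluation at $e$ reduces to \eqref{eq:sphericalvectorevaluation}.

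Concretely, I would first work in a range of parameters where the kernel is locally integrable, so that
\[
\mathbf{A}_{\xi,\lambda}^{\eta,\nu}f(e)=\gamma(\xi,\lambda,\eta,\nu)^{-1}\int_{\overline{N}_G}K_{\xi,\lambda}^{\eta,\nu}(\overline{n})f(\overline{n})\,d\overline{n},
\]
and conclude the general case by analytic continuation (both sides depend holomorphically on $(\lambda,\nu)$ in the normalization by $\gamma$). The key elementary identity is that for $\delta\in\ZZ/2\ZZ$ and $x\in\RR^\times$ one has $|x|^s_{\delta}\cdot x^{[\delta]}=|x|^{s+[\delta]}_0$. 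Applying it coordinate-wise to the product $K_{\xi,\lambda}^{\eta,\nu}(g)\cdot \mathbf{1}_{\xi,\lambda}^{\eta,\nu}(g)\,\mathbf{1}_{\lambda'}(g)^{-1}$ kills every sign character and shifts $s_k\mapsto s_k+[\delta_k]$ and $t_\ell\mapsto t_\ell+[\varepsilon_\ell]$. The resulting expression is $K_{0,\lambda'}^{0,\nu'}(g)\cdot\mathbf{1}_{\lambda'}(g)$ for the unique pair $(\lambda',\nu')$ of induction parameters whose spectral parameters are $(0,s+[\delta],0,t+[\varepsilon])$.

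The main bookkeeping step is to identify this unique $(\lambda',\nu')$ with the explicit formulas stated in the proposition. Starting from the defining relations
\[
\lambda_i'-\nu_{n+1-i}'=\lambda_i-\nu_{n+1-i}+[\xi_i+\eta_{n+1-i}],\quad \nu_{n+1-i}'-\lambda_{i+1}'=\nu_{n+1-i}-\lambda_{i+1}+[\eta_{n+1-i}+\xi_{i+1}],
\]
together with $\lambda_{n+1}'=\lambda_{n+1}+[\xi_{n+1}]$ coming from the last $\kappa$-factor, adding the two equations gives $\lambda_i'-\lambda_{i+1}'=\lambda_i-\lambda_{i+1}+[\delta_i]+[\varepsilon_i]$; telescoping down from $i=n+1$ (and using the conventions $\eta_0=0$, $\xi_{n+2}=0$ to absorb the boundary term) yields the formula for $\lambda_i'$. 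The formula for $\nu_i'$ follows analogously by re-indexing $\ell=n-k$ and telescoping the second family of relations; this is the computation that requires the most care, as one must track how the $\ell=0$ term becomes trivial due to $[\eta_0+\xi_{n+2}]=0$.

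With $(\lambda',\nu')$ identified, I get the factorization
\[
\mathbf{A}_{\xi,\lambda}^{\eta,\nu}\mathbf{1}_{\xi,\lambda}^{\eta,\nu}(e)=\frac{\gamma(0,\lambda',0,\nu')}{\gamma(\xi,\lambda,\eta,\nu)}\,\mathbf{A}_{0,\lambda'}^{0,\nu'}\mathbf{1}_{\lambda'}(e),
\]
and the claim follows by inserting \eqref{eq:sphericalvectorevaluation} and using $\mathbf{1}_{\nu'}(e)=1$. The principal obstacle is not a conceptual one but the combinatorial verification in Step~3 that the telescoped sums reproduce the closed-form expressions for $\lambda_i'$ and $\nu_i'$; once this is done, the remaining steps are either elementary manipulations of Riesz distributions or direct invocations of results already established.
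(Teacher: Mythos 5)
Your proposal is correct and takes essentially the same route as the paper's proof: both cancel the sign characters of $\mathbf{K}_{\xi,\lambda}^{\eta,\nu}$ against the polynomial factors of $\mathbf{1}_{\xi,\lambda}^{\eta,\nu}$ (via $|x|^s_\delta\cdot x^{[\delta]}=|x|^{s+[\delta]}_0$), reduce to the spherical evaluation \eqref{eq:sphericalvectorevaluation}, and then extend by analytic continuation from an open set of convergence. The one point the paper makes explicit that you leave implicit is that the ratio $\gamma(0,\lambda',0,\nu')/\gamma(\xi,\lambda,\eta,\nu)$ is a polynomial (citing \cite[Remark 7.1]{DitlevsenFrahm24}), which guarantees the right-hand side is holomorphic so the identity theorem yields the equality for all parameters.
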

\begin{proof}
Let $\Re(\lambda_1)>\Re(\nu_n)>\Re(\lambda_2)>\cdots>\Re(\lambda_{n+1})$ then the same pattern hold for $(\lambda',\nu')$ and so the operator in the following are given as an integral of continuous functions on the compact set $G/P_G\simeq K_G/M_G$. We have 
\begin{multline}
\mathbf{A}_{\xi,\lambda}^{\eta,\nu}\Big(\kappa_{n+1}^{[\xi_{n+1}]}\prod_{i=1}^n\kappa_i^{[\xi_i+\eta_{n+1-i}]}\theta_i^{[\eta_{n+1-i}+\xi_{i+1}]}\mathbf{1}_{\lambda'}\Big)(e)\\
=\int_{G/P_G}\mathbf{K}_{\xi,\lambda}^{\eta,\nu}(g)\kappa_{n+1}(g)^{[\xi_{n+1}]}\prod_{i=1}^n\kappa_i(g)^{[\xi_i+\eta_{n+1-i}]}\theta_i(g)^{[\eta_{n+1-i}+\xi_{i+1}]}\mathbf{1}_{\lambda'}(g)\,d(gP_G)
\end{multline}
Now all the factors multiplied on the spherical vector $\mathds{1}_{\lambda'}$ are factors of the kernel $\mathbf{K}_{\xi,\lambda}^{\eta,\nu}$ so taking the normalization into account we get 
\[
\frac{\gamma(0,\lambda',0,\nu')}{\gamma(\xi,\lambda,\eta,\nu)}\int_{G/P_G}\mathbf{K}_{0,\lambda'}^{0,\nu'}(g)\mathbf{1}_{\lambda'}(g)\,d(gP_G)=\operatorname{const}\times\frac{\gamma(0,\lambda',0,\nu')}{\gamma(\xi,\lambda,\eta,\nu)}e_G(0,\lambda')e_H(0,-\nu')\mathbf{1}_\nu(e)
\]
by \eqref{eq:sphericalvectorevaluation}. We note that $\gamma(0,\lambda',0,\nu')/\gamma(\xi,\lambda,\eta,\nu)$ is a polynomial by \cite[Remark 7.1]{DitlevsenFrahm24}. This is then an equality of holomorphic functions on an open set and so for the rest of the parameters it follows by the identity theorem for holomorphic functions.
\end{proof}
We can now combine Proposition \ref{prop:Non-spherical evaluation} and Theorem \ref{thm:DSBOasresidue} to get \begin{proposition}
For $(\tilde{\xi},\tilde{\lambda},\tilde{\eta},\tilde{\nu})$ as in Theorem \ref{thm:DSBOasresidue} and $(\lambda',\nu')$ as in  Proposition \ref{prop:Non-spherical evaluation} we have
\begin{multline*}
e_G(\tilde{\xi},\tilde{\lambda})e_H(\tilde{\eta},-\tilde{\nu})\rest_k\circ \calL_{\alpha,k}\mathbf{1}_{\xi,\lambda}^{\eta,\nu}(e)\\=\operatorname{const}\times\prod_{i=1}^kp_i^{(\alpha_i)}(\tilde{\xi},\tilde{\lambda},\tilde{\eta},\tilde{\nu})\prod_{i=k+1}^nq_{i+1}^{(\alpha_i)}(\tilde{\xi},\tilde{\lambda},\tilde{\eta},\tilde{\nu})\frac{\gamma(0,\lambda',0,\nu')}{\gamma(\xi,\lambda,\eta,\nu)}e_G(0,\lambda')e_H(0,-\nu'),
\end{multline*}
for a non-zero constant independent of $(\alpha,\xi,\lambda,\eta,\nu)$.
\end{proposition}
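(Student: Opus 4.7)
The proof plan is essentially to chain together the two main results immediately preceding this statement: Theorem \ref{thm:DSBOasresidue} identifies the differential operator $\rest_k\circ\calL_{\alpha,k}$ (up to an explicit prefactor involving $e$-functions, $p_i^{(\alpha_i)}$, and $q_{i+1}^{(\alpha_i)}$) with the meromorphic operator $\mathbf{A}_{\xi,\lambda}^{\eta,\nu}$, and Proposition \ref{prop:Non-spherical evaluation} evaluates $\mathbf{A}_{\xi,\lambda}^{\eta,\nu}\mathbf{1}_{\xi,\lambda}^{\eta,\nu}$ explicitly at the identity in terms of the gamma ratio and the $e$-functions of $G$ and $H$ at the shifted parameters $(\lambda',\nu')$. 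The desired formula is exactly the composition of these two identities.

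Concretely, the plan is to first apply the operator identity from Theorem \ref{thm:DSBOasresidue} to the test vector $\mathbf{1}_{\xi,\lambda}^{\eta,\nu}\in\pi_{\xi,\lambda}$, obtaining
\[
e_G(\tilde{\xi},\tilde{\lambda})e_H(\tilde{\eta},-\tilde{\nu})\,\rest_k\circ\calL_{\alpha,k}\mathbf{1}_{\xi,\lambda}^{\eta,\nu} = \prod_{i=1}^k p_i^{(\alpha_i)}(\tilde{\xi},\tilde{\lambda},\tilde{\eta},\tilde{\nu})\prod_{i=k+1}^n q_{i+1}^{(\alpha_i)}(\tilde{\xi},\tilde{\lambda},\tilde{\eta},\tilde{\nu})\,\mathbf{A}_{\xi,\lambda}^{\eta,\nu}\mathbf{1}_{\xi,\lambda}^{\eta,\nu}
\]
as an identity of vectors in $\tau_{\tilde{\eta},\tilde{\nu}}=\tau_{\eta_k(\tilde{\xi}),\nu_k(\tilde{\lambda})}$. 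Next, I would evaluate both sides at $e\in H$. The left-hand side becomes the quantity appearing in the statement, and the right-hand side involves $\mathbf{A}_{\xi,\lambda}^{\eta,\nu}\mathbf{1}_{\xi,\lambda}^{\eta,\nu}(e)$, which by Proposition \ref{prop:Non-spherical evaluation} equals
\[
\operatorname{const}\times \frac{\gamma(0,\lambda',0,\nu')}{\gamma(\xi,\lambda,\eta,\nu)}e_G(0,\lambda')e_H(0,-\nu'),
\]
with a nonzero constant independent of $(\xi,\lambda,\eta,\nu)$. Substituting gives exactly the claimed identity, with the final constant being the one furnished by Proposition \ref{prop:Non-spherical evaluation}, which does not depend on $\alpha$ either (since $\alpha$ enters only through the shifts already recorded in the $p_i^{(\alpha_i)}$, $q_{i+1}^{(\alpha_i)}$ and in the tilded parameters).

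Since both inputs are already established and the argument is a direct composition, there is no serious obstacle. The one bookkeeping point to verify is simply that the parenthetical hypothesis $(\tilde{\eta},\tilde{\nu})=(\eta_k(\tilde{\xi}),\nu_k(\tilde{\lambda}))$ required by Theorem \ref{thm:DSBOasresidue} is indeed part of the setup, so that $\rest_k\circ\calL_{\alpha,k}$ lands in $\tau_{\tilde{\eta},\tilde{\nu}}$ and the identity of operators can be legitimately applied to $\mathbf{1}_{\xi,\lambda}^{\eta,\nu}$ before evaluating at $e$. Once this compatibility is noted, the proposition follows at once.
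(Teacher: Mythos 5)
Your proposal is correct and follows exactly the route the paper takes: the paper's own justification is literally the phrase ``We can now combine Proposition \ref{prop:Non-spherical evaluation} and Theorem \ref{thm:DSBOasresidue}'', and your composition of the operator identity from Theorem \ref{thm:DSBOasresidue} with the evaluation $\mathbf{A}_{\xi,\lambda}^{\eta,\nu}\mathbf{1}_{\xi,\lambda}^{\eta,\nu}(e)$ from Proposition \ref{prop:Non-spherical evaluation} is precisely that combination. Your bookkeeping remark about the hypothesis $(\tilde{\eta},\tilde{\nu})=(\eta_k(\tilde{\xi}),\nu_k(\tilde{\lambda}))$ correctly identifies the one compatibility condition that must hold for the chain to go through.
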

As all the functions involved only has zeros when $\lambda_i-\lambda_j\in \ZZ$ and $\tilde{\lambda}$,  $\tilde{\nu}$, $\lambda'$ and $\nu'$ are integer shifts of $(\lambda,\nu)$ the above identity gives us the following corollary.
\begin{corollary}
The operator $\rest_k\circ\calL_{\alpha,k}$ is non-zero for generic $(\xi,\lambda)$.
\end{corollary}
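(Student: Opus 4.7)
The plan is to exploit the closed formula from the preceding Proposition, evaluating $\rest_k\circ\calL_{\alpha,k}$ on the concrete test function $\mathbf{1}_{\xi,\lambda}^{\eta,\nu}$ at the identity. First I would set $(\eta,\nu)=(\eta_{\alpha,k}(\xi),\nu_{\alpha,k}(\lambda))$, which is the codomain parameter of $\rest_k\circ\calL_{\alpha,k}$, and check that the hypothesis $(\tilde{\eta},\tilde{\nu})=(\eta_k(\tilde{\xi}),\nu_k(\tilde{\lambda}))$ of Theorem \ref{thm:DSBOasresidue} is then satisfied. This is a bookkeeping check against the definitions of $\eta_{\alpha,k},\nu_{\alpha,k}$ given after \eqref{eq:DefinitionL} and the shifts $\sum\alpha_i(\hat{e}_i,\hat{e}_i)$, $\sum\alpha_i(e_{i+1},e_{i+1})$ and $\sum\alpha_i(\mathds{1},\mathds{1})$ defining the tilded parameters.

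With the hypothesis in place, the Proposition expresses $(\rest_k\circ\calL_{\alpha,k})\mathbf{1}_{\xi,\lambda}^{\eta,\nu}(e)$, up to the factor $e_G(\tilde{\xi},\tilde{\lambda})^{-1}e_H(\tilde{\eta},-\tilde{\nu})^{-1}$, as the product
\[
\prod_{i=1}^k p_i^{(\alpha_i)}(\tilde{\xi},\tilde{\lambda},\tilde{\eta},\tilde{\nu})\cdot\prod_{i=k+1}^n q_{i+1}^{(\alpha_i)}(\tilde{\xi},\tilde{\lambda},\tilde{\eta},\tilde{\nu})\cdot\frac{\gamma(0,\lambda',0,\nu')}{\gamma(\xi,\lambda,\eta,\nu)}\cdot e_G(0,\lambda')\cdot e_H(0,-\nu').
\]
The next step is to argue that every factor is non-zero on the generic locus $\{\lambda_i-\lambda_j\notin\mathbb{Z}\}$. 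The $e_G$- and $e_H$-factors are products of reciprocal Gamma functions in half-integer shifts of coordinate differences $\lambda_i-\lambda_j$ or $\nu_i-\nu_j$, so they vanish only when the underlying differences are integers. The Bernstein--Sato factors $p_i^{(\alpha_i)}$ and $q_{i+1}^{(\alpha_i)}$ are Pochhammer-type ratios of Gamma functions whose arguments differ by integers, hence are polynomials in $\nu_j-\lambda_i$ or $\lambda_i-\nu_j$ whose roots likewise lie on $\{\lambda_i-\lambda_j\in\mathbb{Z}\}$. Finally, the ratio $\gamma(0,\lambda',0,\nu')/\gamma(\xi,\lambda,\eta,\nu)$ is a polynomial by \cite[Remark 7.1]{DitlevsenFrahm24} whose factors are of the same Pochhammer shape. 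Since $\tilde{\lambda},\lambda',\tilde{\nu},\nu'$ are integer shifts of $(\lambda,\nu_{\alpha,k}(\lambda))$ and $\nu_{\alpha,k}(\lambda)$ is itself a half-integer shift of $\lambda$, genericity keeps every argument off of $\mathbb{Z}$.

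The main obstacle is not conceptual but bookkeeping: one must track how each shift interacts with the parity exponents $[\cdot]$ hidden inside the Gamma arguments, and confirm that no shift can push an argument onto an integer under the genericity hypothesis. Once this is verified factor by factor, the right-hand side is a non-zero value of the left-hand side for all generic $(\xi,\lambda)$, so $\rest_k\circ\calL_{\alpha,k}$ cannot be identically zero, and the corollary follows.
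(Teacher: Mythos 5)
Your proposal matches the paper's argument: evaluate $\rest_k\circ\calL_{\alpha,k}$ on $\mathbf{1}_{\xi,\lambda}^{\eta,\nu}$ at $e$ via the preceding Proposition, and observe that every factor in the resulting product of Gamma ratios, Pochhammer symbols and $e$-functions is non-zero on the generic locus because the shifted parameters differ from $\lambda$ by integer or half-integer amounts. The paper phrases this more tersely by leaving the $e_G,e_H$ factors on the left-hand side (so that non-vanishing of the right-hand side directly forces the evaluated value to be non-zero) rather than inverting them, but this is a cosmetic difference and your bookkeeping check of the hypothesis of Theorem \ref{thm:DSBOasresidue} is a reasonable addition that the paper leaves implicit.
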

This Corollary then completes the proof of Theorem \ref{thm:genericDSBO}.
\section{Non generic parameters: the \texorpdfstring{$n=2$}{n=2} case}
Until now we worked in the case of generic parameters $(\xi,\lambda)$, i.e. when $\lambda_i-\lambda_j\notin \Z$ for all $i,j$. In this section we classify and construct differential symmetry breaking operators for any parameters $(\xi,\lambda)$ in the case $n=2$, that is for the pair $(\GL_3(\R),\GL_2(\R))$. In particular, we will exhibit some multiplicity two phenomenon in this situation i.e. where $\dim \operatorname{DSBO}_1(\pi_{\xi,\lambda},\tau_{\eta,\nu})=2$.

Recall that, for each element $x_k$ for $0\leq k\leq 2$, we want to find distributional kernels $K$ for which $\supp(K)=x_kP_G$. We can consider the open dense subset of $G$ given as $x_k\overline{N}_GP_G$ which contains the support of the distribution and restrict the distribution to this set. As the distribution has right $P_G$ equivariance we have to determine $K$ on $x_k\overline{N}_G$ which is isomorphic to $\RR^3$. As mentioned in Subsection \ref{subsec:SourceOperator} we consider the distributions 
\[
K_k(\overline{n})=K(x_k\overline{n}),
\]
which are all supported at the origin in $\overline{N}_G$. Writing elements of $\overline{N}_G$ in the coordinates \[\begin{pmatrix}
    1&0&0\\x&1&0\\z&y&1
\end{pmatrix},\quad (x,y,z\in \R),\] the distributional kernel can be written as a finite sum as follows
\[
K_k(\overline{n})=\sum_{i_x,i_y,i_z} a(i_x,i_y,i_z)\delta^{(i_x)}(x)\delta^{(i_y)}(y)\delta^{(i_z)}(z).
\]

We will now look at each possibility for $k=0,1,2$. We give details of the solution in the $k=1$ case, and give the results for the other two cases since the analysis is similar and less complicated.  

\subsection{Support at \texorpdfstring{$x_1P_G$}{x1PG}}
\subsubsection{Classification of differential symmetry breaking operators}
The case $k=1$ is the most involved and the only one that leads to multiplicity two. First, we find the following differential equations satisfied by $K_1$ as consequences of the equivariance property \eqref{eq:EquivarianceKernelShift}

\[
\left \{
\begin{array}{ l }
x\partial_x+z\partial_z  =\lambda_1-\nu_1-\frac{3}{2} ,\\
    y\partial_y+z\partial_z  =\nu_2-\lambda_3-\frac{3}{2} ,\\
    z(\lambda_1-\lambda_3-2-x\partial_x-z\partial_z-y\partial_y)- xy(\lambda_2-\lambda_3-1-y\partial_y) =0.
\end{array}
\right.
\]
The first two equations are consequences of Lemma \ref{lemm:DifferentialEquationGeneral}. The third one is a consequence of the equivariance property applied to the following equality:
\begin{multline*}
\begin{pmatrix}
    1&0&t\\    0&1&0\\  0&0&1
\end{pmatrix}
\begin{pmatrix}
    1&0&0\\x&1&0\\z&y&1
\end{pmatrix}
=
\begin{pmatrix}
    1&0&0\\\frac{x}{1+tz}&1&0\\\frac{z}{1+tz} &\frac{y}{1+t(z-xy)}&1
\end{pmatrix}\\\times  
\begin{pmatrix}
    1&\frac{ty(1+tz)}{1+t(z-xy)}&t(1+t(z-xy))\\0&1&-\frac{tx(1+t(z-xy))}{1+tz}\\0&0&1
\end{pmatrix}
\begin{pmatrix}
    1+tz&0&0\\0&\frac{1+t(z-xy)}{1+tz}&0\\0&0&(1+t(z-xy))^{-1}
\end{pmatrix}.
\end{multline*}
 Differentiating the equivariance relation with respect to the variable $t$ and evaluating at $t=0$ then gives the third differential equation. 

The first two equations lead to the following
\begin{align*}
    i_x+i_z&=\nu_1-\lambda_1-\frac{1}{2}\\
    i_y+i_z&=\lambda_3-\nu_2-\frac{1}{2},
\end{align*}
and so we set $n_1=\nu_1-\lambda_1-\frac{1}{2}$ and $n_2=\lambda_3-\nu_2-\frac{1}{2}$.
The parameters $n_1$ and $n_2$ correspond respectively to $\beta_1(\lambda,\nu)$ and $\beta'_1(\lambda,\nu)$ introduced in Theorem \ref{thm:genericDSBO}. Thus there can only be a non-zero operator if $n_1$, $n_2$ are non-negative integers. If so, setting $N=\min(n_1,n_2)$, we write the kernel:
\[
K_1=\sum_{j=0}^N c_j \delta^{(n_1-j)}(x)\delta^{(n_2-j)}(y)\delta^{(j)}(z). 
\]
Applying the third equation to this expression gives a recurrence relation for the coefficients $c_j$ for $0\leq j\leq N-1$ given by
\[
(\lambda_1-\lambda_3+n_1+n_2-j)(j+1)c_{j+1}=-(\lambda_2-\lambda_3+n_2-j)(n_1-j)(n_2-j)c_j. 
\]
Solving this recurrence relation gives us three cases for differential symmetry breaking operators summarized in the following proposition. 
\begin{proposition}\label{prop:DSBOk=1}
    We have the following three cases:
    \begin{enumerate}
        \item[(1)] If $n_1,\ n_2\in \N$ and there does not exist integers $k_0\leq \ell_0$ such that $\lambda_1-\lambda_3+n_1+n_2-\ell_0=\lambda_2-\lambda_3+n_2-k_0=0$, and 
        \[
        \eta_1= \xi_1+[n_1] \qquad \text{ and }\qquad  \eta_2 = \xi_3+[n_2],\] then 
        \[\dim (\DSBO_1(\pi_{\xi,\lambda},\tau_{\eta,\nu}))=1.\]
        In that situation, if $\lambda_1-\lambda_3+n_1+n_2-\ell\neq 0$ for all $\ell\leq N-1$, the distribution kernel of a basis of $\DSBO_1(\pi_{\xi,\lambda},\tau_{\eta,\nu})$ is given by
        \[
        \sum_{j=0}^N (-1)^j\frac{(-n_1)_j(-n_2)_j(\lambda_3-\lambda_2-n_2)_j}{(\lambda_3-\lambda_1-n_1-n_2)_j j!} \delta^{(n_1-j)}(x)\delta^{(n_2-j)}(y)\delta^{(j)}(z).
        \] 
        If there exists $\ell_0\leq N-1$ such that $\lambda_1-\lambda_3+n_1+n_2-\ell_0= 0$, the distribution kernel of a basis of $\DSBO_1(\pi_{\xi,\lambda},\tau_{\eta,\nu})$ is given by
        \[
        \sum_{j=\ell_0+1}^N (-1)^j\frac{(-n_1)_j(-n_2)_j(\lambda_3-\lambda_2-n_2)_j}{(\lambda_3-\lambda_1-n_1-n_2)_j j!} \delta^{(n_1-j)}(x)\delta^{(n_2-j)}(y)\delta^{(j)}(z).
        \] 
        \item[(2)] If $n_1,\ n_2\in \N$  and there exists integers $0\leq k_0\leq \ell_0\leq N-1$ such that $\lambda_1-\lambda_3+n_1+n_2-\ell_0=\lambda_2-\lambda_3+n_2-k_0=0$ and 
        \[
        \eta_1= \xi_1+[n_1] \qquad \text{ and } \qquad\eta_2= \xi_3+[n_2],
        \]
        then 
        \[
        \dim (\DSBO_1(\pi_{\xi,\lambda},\tau_{\eta,\nu}))=2.\]
        The distribution kernels of a basis of $\DSBO_1(\pi_{\xi,\lambda},\tau_{\eta,\nu})$ are given by the following 
        \[
         \sum_{j=\ell_0+1}^N (-1)^j\frac{(-n_1)_j(-n_2)_j(\lambda_3-\lambda_2-n_2)_j}{(\lambda_3-\lambda_1-n_1-n_2)_j j!} \delta^{(n_1-j)}(x)\delta^{(n_2-j)}(y)\delta^{(j)}(z),
        \] 
        and 
        \[
        \sum_{j=0}^{k_0} (-1)^j\frac{(-n_1)_j(-n_2)_j(\lambda_3-\lambda_2-n_2)_j}{(\lambda_3-\lambda_1-n_1-n_2)_j j!} \delta^{(n_1-j)}(x)\delta^{(n_2-j)}(y)\delta^{(j)}(z).
        \]
        
        \item[(3)] Otherwise 
        \[\dim (\DSBO_1(\pi_{\xi,\lambda},\tau_{\eta,\nu}))=0.\]
    \end{enumerate}
\end{proposition}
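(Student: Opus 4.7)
My approach reduces the classification to a scalar linear recurrence on the coefficients of a finite distributional ansatz, and then classifies solutions by where the recurrence degenerates. First, writing any distribution supported at the origin of $\overline{N}_G \simeq \RR^3$ as a finite sum $K_1 = \sum a(i_x, i_y, i_z) \delta^{(i_x)}(x) \delta^{(i_y)}(y) \delta^{(i_z)}(z)$, I apply the two Euler-type equations (obtained as in Lemma \ref{lemm:DifferentialEquationGeneral} from the conjugation action of $\gamma_1(t)$ and $\delta_1(t)$) together with the fact that $\delta^{(m)}$ is homogeneous of degree $-m-1$ to force $i_x + i_z = n_1 := \nu_1 - \lambda_1 - \tfrac{1}{2}$ and $i_y + i_z = n_2 := \lambda_3 - \nu_2 - \tfrac{1}{2}$. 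Hence $K_1 = 0$ unless $n_1, n_2 \in \NN$, in which case the ansatz reduces to $K_1 = \sum_{j=0}^N c_j \delta^{(n_1-j)}(x) \delta^{(n_2-j)}(y) \delta^{(j)}(z)$ with $N = \min(n_1, n_2)$; the same computation at $t=-1$ (without differentiation) yields the parity constraints $\eta_1 = \xi_1 + [n_1]$ and $\eta_2 = \xi_3 + [n_2]$, whose failure accounts for part of case $(3)$.

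Plugging this ansatz into the third differential equation and using $x \delta^{(m)}(x) = -m\, \delta^{(m-1)}(x)$ and $x\partial_x \delta^{(m)}(x) = -(m+1)\, \delta^{(m)}(x)$ (and the analogues in $y,z$), I collect coefficients of each basis distribution $\delta^{(n_1-j-1)}(x) \delta^{(n_2-j-1)}(y) \delta^{(j)}(z)$ to read off the stated recurrence $A_j\, c_{j+1} = B_j\, c_j$, where
\[
A_j = (j+1)(\lambda_1 - \lambda_3 + n_1 + n_2 - j), \qquad B_j = -(\lambda_2 - \lambda_3 + n_2 - j)(n_1-j)(n_2-j).
\]
When $A_j \neq 0$ for all $j \in \{0, \dots, N-1\}$ the iteration $c_j = c_0 \prod_{k<j} B_k/A_k$ is well-defined and, after converting the finite products to Pochhammer symbols, produces the explicit formula of the first sub-case of $(1)$.

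The rest is a finite case analysis on where $A_j$ and the first linear factor of $B_j$ vanish in $\{0, \dots, N-1\}$; the factors $(n_1-j)$ and $(n_2-j)$ are nonzero on this range by construction, so the only possible zeros occur at an $\ell_0$ with $A_{\ell_0} = 0$ and at a $k_0$ with $\lambda_2 - \lambda_3 + n_2 - k_0 = 0$. If only an $\ell_0$ exists without a matching $k_0 \leq \ell_0$, then $B_{\ell_0} \neq 0$ forces $c_{\ell_0} = 0$; propagating backwards through the nonvanishing $A_j, B_j$ yields $c_j = 0$ for all $j \leq \ell_0$, leaving $c_{\ell_0+1}$ as the unique free parameter and giving the second sub-case of $(1)$. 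If both $\ell_0$ and $k_0 \leq \ell_0$ exist, then at $j = \ell_0$ the equation is trivially $0 = 0$, so $c_{\ell_0+1}$ is a second, independent free parameter, while forward propagation from $c_0$ terminates at $c_{k_0+1} = 0$ (since $B_{k_0} = 0$); the two resulting solutions are supported on the disjoint index blocks $\{0, \dots, k_0\}$ and $\{\ell_0+1, \dots, N\}$ respectively, yielding the two-dimensional space of $(2)$. Case $(3)$ then follows by exhaustion of the remaining configurations.

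The main obstacle is the bookkeeping: one must verify that the closed-form Pochhammer expressions correctly represent the recurrence's solutions in every sub-case, notably in the second sub-case of $(1)$ where the denominator $(\lambda_3 - \lambda_1 - n_1 - n_2)_j$ vanishes for $j \geq \ell_0+1$ and the expression has to be interpreted via the renormalization that extracts the residue at the pole. One must also check several boundary configurations (for instance $k_0 > \ell_0$, which is subsumed into $(1)$, or either index hitting $0$ or $N-1$) by comparing the support of the formal closed-form solution with the support predicted by the case analysis, and confirm in case $(2)$ the linear independence of the lower and upper solutions as distributions, which is immediate from the disjointness of the $z$-derivative orders in their supports.
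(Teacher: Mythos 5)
Your proposal is correct and follows essentially the same route as the paper: reduce to the finite ansatz $K_1 = \sum_{j=0}^N c_j\,\delta^{(n_1-j)}(x)\delta^{(n_2-j)}(y)\delta^{(j)}(z)$ via the homogeneity and parity constraints from Lemma~\ref{lemm:DifferentialEquationGeneral}, obtain the same first-order recurrence $(j+1)(\lambda_1-\lambda_3+n_1+n_2-j)\,c_{j+1} = -(\lambda_2-\lambda_3+n_2-j)(n_1-j)(n_2-j)\,c_j$ from the third equivariance equation, and classify solutions by where the coefficient of $c_{j+1}$ vanishes in $\{0,\dots,N-1\}$. The only difference is that you spell out the forward/backward propagation and the degenerate bookkeeping (including the $k_0 > \ell_0$ boundary configuration and the Pochhammer renormalization in the second sub-case of~(1)) which the paper leaves to the reader after writing down the recurrence.
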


\begin{remark}
    For generic parameters, we recover the results of Theorem \ref{thm:genericDSBO} for $k=1$. Indeed, for generic parameters we are always either in case (1) or (3). Moreover, case (3) is equivalent to $(\xi,\lambda,\eta,\nu)$ not being in $L_1$. 
\end{remark}

%%%%%%%%%%%%%%%%%%%
%%%%%%%%%%%%%%%%%%%
%%%%%%%%%%%%%%%%%%%
%%%%%%%%%%%%%%%%%%%

\subsubsection{Alternative construction of the bases}
We now give an alternative constructions of the bases for $\DSBO_1(\pi_{\xi,\lambda},\tau_{\eta,\nu})$ in the first two cases using the operators $\calF_1$ and $\calD_3$. Case $(1)$ of Proposition \ref{prop:DSBOk=1} turns out to be achievable using a renormalization of the operator $\calL_{\alpha,1}$ (see Proposition \ref{prop:DSBOMultiplictyTwo} below). 

In case $(2)$ of Proposition \ref{prop:DSBOk=1} an initial idea for how to construct a differential symmetry breaking operator, is to use the operator $\rest_1\circ \calF_1^{n_1}\circ \calD^{n_2}_3$ but it is in general zero. We end up constructing the symmetry breaking operators with the use of the operators $\rest_1\circ \calF_1^n\circ \calD_3^m$ together with some differential intertwining operators for $G$ or $H$. There are explicitly given by powers of the operators $\varepsilon^{i,j}$ defined in \eqref{eq:DefVarepsilon}. 

The parameters in case $(2)$ can be expressed as: there exists $\lambda_0\in \C$ such that for $n_1,n_2\geq 1$ we have
\begin{equation}\label{eq:ParametersMultiplicityTwo}
\lambda=(\lambda_0,\lambda_0+n_1+k_0-\ell_0,\lambda_0+n_1+n_2-\ell_0),\qquad\text{and}\qquad \nu=(\lambda_0+n_1+\tfrac{1}{2},\lambda_0+n_1-\ell_0 -\tfrac{1}{2}),
\end{equation}
together with
\[
\xi=(\xi_1,\xi_2,\xi_3),\qquad\text{and}\qquad \eta=(\xi_1+n_1,\xi_3+n_2 ). 
\]
Consider the parameters 
\[
(\eta',\nu')=w_1(\eta,\nu)=((\eta_2,\eta_1),(\lambda_0+n_1-\ell_0-\tfrac{1}{2},\lambda_0+n_1+\tfrac{1}{2})),
\]
and 
\[
(\xi',\lambda')=w_2(\xi,\lambda)=((\xi_1,\xi_3,\xi_2),(\lambda_0,\lambda_0+n_1+n_2-\ell_0,\lambda_0+n_1+k_0-\ell_0)).
\]
We can then construct two types of differential symmetry breaking operators by the following diagram
\[
 \xymatrix{
 &&& \tau_{\eta',\nu'}  \ar[drrr]^{(\varepsilon^{2,1})^{\ell_0+1}}&&&\\
   \pi_{\xi,\lambda}  \ar[urrr]^{ B_0} \ar[drrr]_{(\varepsilon^{3,2})^{n_2-k_0}} &&& &&& \tau_{\eta,\nu}\\
 &&&\pi_{\xi',\lambda'} \ar[urrr]_{B_1}&&&
  }
\]
where
\[
 B_0=\rest_1\circ\calF_1^{n_1-\ell_0-1}\circ \calD_3^{n_2-\ell_0-1},
\]
and
\[
B_1=\frac{1}{(\ell_0-n_1+1)_{n_1}}\rest_1\circ \calF_1^{n_1}\circ\calD_3^{k_0}.
\]

This leads to the following result.
\begin{proposition}\label{prop:DSBOMultiplictyTwo}
    In case $(1)$ from Proposition \ref{prop:DSBOk=1} the renormalized operator 
    \[
    \frac{1}{(\lambda_1-\lambda_3+1+n_2)_{n_1}}\rest_1\circ \calF_1^{n_1}\circ \calD_3^{n_2},
    \]
    is always non-zero and a basis of $\DSBO_1(\pi_{\xi,\lambda},\tau_{\eta,\nu})$.

    In case (2) of Proposition \ref{prop:DSBOk=1} the following two operators 
    \[
    \frac{1}{(\ell_0-n_1+1)_{n_1}}\rest_1\circ \calF_1^{n_1}\circ\calD_3^{k_0}\circ \textbf{T}^{(23)}_{\xi,\lambda}
    \]
    and
    \[
     \textbf{T}^{(12)}_{\eta',\nu'}\circ  \rest_1\circ\calF_1^{n_1-\ell_0-1}\circ \calD_3^{n_2-\ell_0-1},
    \]
    are non-zero and they form a basis of $\DSBO_1(\pi_{\xi,\lambda},\tau_{\eta,\nu})$.
\end{proposition}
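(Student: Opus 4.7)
The plan is to verify each part of the proposition by computing the distributional kernel of the candidate operator and matching it against the kernels classified in Proposition \ref{prop:DSBOk=1}.

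For case (1), I would apply Theorem \ref{thm:DSBOasresidue} with $k=1$ and $\alpha=(n_1,n_2)$, which expresses $e_G(\tilde\xi,\tilde\lambda)e_H(\tilde\eta,-\tilde\nu)\rest_1\circ \calF_1^{n_1}\circ \calD_3^{n_2}$ as an explicit scalar multiple of the holomorphic family $\mathbf{A}_{\xi,\lambda}^{\eta,\nu}$. At the parameters of case (1) suitable residues of $\mathbf{A}_{\xi,\lambda}^{\eta,\nu}$ are governed by the kernel listed in Proposition \ref{prop:DSBOk=1}(1). By isolating in the resulting scalar the single factor that vanishes precisely on the subvariety where $\rest_1\circ \calF_1^{n_1}\circ \calD_3^{n_2}$ itself degenerates, one identifies $(\lambda_1-\lambda_3+1+n_2)_{n_1}$ as the required renormalization; dividing by it produces a non-zero operator proportional to the Proposition \ref{prop:DSBOk=1}(1) basis element.

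For case (2), I would first verify that at the specialized parameters \eqref{eq:ParametersMultiplicityTwo} the normalized Knapp--Stein operators $\mathbf{T}^{(23)}_{\xi,\lambda}$ and $\mathbf{T}^{(12)}_{\eta',\nu'}$ are genuinely differential. Using Section \ref{sec:Knapp-Stein}, the differences $\lambda_2-\lambda_3=k_0-n_2$ and $\nu_1-\nu_2=-(\ell_0+1)$, together with the parities of $\xi_2,\xi_3$ and $\eta_1,\eta_2$ forced by the case-(2) setup, place these operators exactly on the negative-integer poles of the unnormalized $T^{w_i}$ at which the normalized version becomes a differential operator. Consequently both composites $B_1\circ \mathbf{T}^{(23)}_{\xi,\lambda}$ and $\mathbf{T}^{(12)}_{\eta',\nu'}\circ B_0$ lie in $\DSBO_1(\pi_{\xi,\lambda},\tau_{\eta,\nu})$.

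The next step is to match each composite to one of the two basis kernels of Proposition \ref{prop:DSBOk=1}(2). Pre-composition with $\mathbf{T}^{(23)}_{\xi,\lambda}$ shifts the $\pi$-parameters to $(\xi',\lambda')$, which relative to the original $(\eta,\nu)$ falls into the case-(1) regime already settled; by case (1) the operator $B_1\circ \mathbf{T}^{(23)}_{\xi,\lambda}$ is non-zero, and tracing the shifts through the kernel formula identifies it with the $j\in\{0,\dots,k_0\}$ basis kernel. Symmetrically, post-composition with $\mathbf{T}^{(12)}_{\eta',\nu'}$ applied to $B_0$ realizes the $j\in\{\ell_0+1,\dots,N\}$ basis kernel. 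Since these two kernels have disjoint supports in $j$, linear independence is automatic and they span $\DSBO_1(\pi_{\xi,\lambda},\tau_{\eta,\nu})$.

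The main obstacle will be the bookkeeping of gamma-factor scalars: one has to track which Pochhammer symbols cancel or survive when the meromorphic objects are specialized, and confirm that the renormalizations $(\lambda_1-\lambda_3+1+n_2)_{n_1}^{-1}$ in case (1) and $(\ell_0-n_1+1)_{n_1}^{-1}$ in case (2) cancel exactly the zeros introduced by the source-operator compositions. A secondary subtlety is to check that the two Knapp--Stein residues themselves do not vanish at the specialization, which follows from the explicit pole expansion of $\mathbf{T}^{w_i}$ recalled in Section \ref{sec:Knapp-Stein}.
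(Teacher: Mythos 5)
Your proposal takes a genuinely different route from the paper, and it contains a gap that I do not see how to close. The paper's proof of both parts rests critically on Lemma~\ref{lemma:OperatorForMultiplicityTwo}, which gives the explicit expansion
\[
\rest_1\circ\calF_1^n\circ\calD_3^m=(\lambda_1-\lambda_3+1+m)_n\,{\det}_H^n\sum_{i=0}^m(n-i+1)_i\,a_i(m)\,\rest_1\circ(\varepsilon^{2,1})^{n-i}(\varepsilon^{3,2})^{m-i}(\varepsilon^{3,1})^i,
\]
together with linear independence of the family $\rest_1\circ(\varepsilon^{2,1})^{n-i}(\varepsilon^{3,2})^{m-i}(\varepsilon^{3,1})^i$ and the explicit values of $a_0$ and $a_m$. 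This is where the Pochhammer factor $(\lambda_1-\lambda_3+1+n_2)_{n_1}$ literally comes from as a \emph{polynomial factor} of the operator, and the non-vanishing of the remaining sum is then a direct coefficient check. You never use or reprove this lemma, and without it your case~(1) argument is circular: you propose to read off the renormalization as ``the single factor that vanishes precisely on the subvariety where $\rest_1\circ\calF_1^{n_1}\circ\calD_3^{n_2}$ itself degenerates,'' but the vanishing locus of that operator is exactly what must be determined, not something you may assume.

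More specifically, the residue identity of Theorem~\ref{thm:DSBOasresidue} does not suffice at the non-generic parameters of Section~5. It reads $e_G(\tilde\xi,\tilde\lambda)e_H(\tilde\eta,-\tilde\nu)\,\rest_1\circ\calL_{\alpha,1}=\bigl(\prod p_i^{(\alpha_i)}\prod q_{i+1}^{(\alpha_i)}\bigr)\mathbf{A}_{\xi,\lambda}^{\eta,\nu}$, and at the degenerate points under consideration the $e$-functions on the left (being reciprocals of $\Gamma$-factors) and the Bernstein--Sato polynomials on the right can all vanish simultaneously, in which case the identity collapses to $0=0$ and says nothing about whether $\rest_1\circ\calL_{\alpha,1}$ is zero. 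This is precisely why the paper's Section~4.4 non-vanishing argument was restricted to generic $(\xi,\lambda)$; to go beyond it one needs the direct structural information of Lemma~\ref{lemma:OperatorForMultiplicityTwo}, not just the meromorphic identity.

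For case~(2) your plan to reduce the non-vanishing of $B_1\circ\mathbf{T}^{(23)}_{\xi,\lambda}$ to case~(1) is an interesting idea (and a parameter check shows that $(\xi',\lambda',\eta,\nu)$ does indeed fall under case~(1), with the case-(1) renormalization becoming $(\ell_0-n_1+1)_{n_1}$), but it only works once case~(1) is actually established. Moreover, your linear-independence argument presumes that the two composites have kernels supported on the disjoint $j$-ranges of Proposition~\ref{prop:DSBOk=1}(2), which you do not verify; the paper instead computes the compositions $(\varepsilon_H^{2,1})^{\ell_0+1}\circ B_0$ and $B_1\circ(\varepsilon^{3,2})^{n_2-k_0}$ explicitly from Lemma~\ref{lemma:OperatorForMultiplicityTwo} and reads off non-proportionality directly. (Your observation that $\mathbf{T}^{(23)}_{\xi,\lambda}$ and $\mathbf{T}^{(12)}_{\eta',\nu'}$ specialize to differential operators at these parameters is correct and agrees with the paper's use of $(\varepsilon^{3,2})^{n_2-k_0}$ and $(\varepsilon^{2,1}_H)^{\ell_0+1}$ via the residue formula for $\mathbf{T}^{w_i}$.) In short: the missing ingredient is the explicit expansion of $\rest_1\circ\calF_1^n\circ\calD_3^m$ in the $\varepsilon$-basis; you need either to reprove it or cite it, as the residue machinery alone cannot carry the argument at non-generic parameters.
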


To prove this result we need a couple of lemmas starting with the following. 
\begin{lemma}\label{lemma:OperatorForMultiplicityTwo}
    For $\lambda=(\lambda_1,\lambda_2,\lambda_3)\in \C^3$ and $n,m\in \N$ we have 
    \begin{equation*}
        \rest_1\circ \calF_1^n\circ \calD_3^m=(\lambda_1-\lambda_3 +1+m)_n{\det}_H^n 
        \sum_{i=0}^m (n-i+1)_i a_i(m) \rest_1\circ \left(\varepsilon^{2,1}\right)^{n-i}\left(\varepsilon^{3,2}\right)^{m-i}\left(\varepsilon^{3,1}\right)^i,
    \end{equation*}
    with $a_i(m)\in \C$. The first and the last coefficients are explicitly given by 
    \begin{align*}
       a_0(m)&=(\lambda_1-\lambda_3+n+1)_m,\\ 
       a_m(m)&=(\lambda_2-\lambda_3+1)_m. 
    \end{align*}
\end{lemma}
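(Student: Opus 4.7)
The plan is to decouple the two halves of the composition and analyze the $\calD_3^m$-half via a PBW-style recursion. The needed algebraic input is: direct evaluation gives $\rest_1(\Phi_i)(h)=\delta_{i,2}$ and $\rest_1(\Psi_i)(h)=\delta_{i,2}\det h$, and direct differentiation yields the derivation table
\[
\varepsilon^{2,1}\Phi_1=\Phi_2,\quad \varepsilon^{3,1}\Phi_1=\Phi_3,\quad \varepsilon^{3,2}\Phi_2=\Phi_3,\quad \varepsilon^{2,1}\Psi_2=\Psi_1,
\]
with all other relevant $\varepsilon^{i,j}\Phi_k$ and $\varepsilon^{i,j}\Psi_k$ vanishing. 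In the opposite nilradical generated by $\varepsilon^{2,1},\varepsilon^{3,2},\varepsilon^{3,1}$ one has a Heisenberg structure: $\varepsilon^{3,1}$ is central and the only non-trivial commutator gives $(\varepsilon^{3,2})^b\varepsilon^{2,1}=\varepsilon^{2,1}(\varepsilon^{3,2})^b+b\varepsilon^{3,1}(\varepsilon^{3,2})^{b-1}$.

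For the $\calF_1$-block, I prove by induction on $n$ that for every $k\geq 0$ and every parameter $\mu$,
\[
\rest_1\circ(\varepsilon^{2,1})^k\circ\calF_1^n(\mu)=(\mu_1-\mu_3+1)_n\,{\det}_H^n\,\rest_1\circ(\varepsilon^{2,1})^{k+n}.
\]
Expanding $\calF_1(\mu)$ along its first column, only the $M_{\Psi_2}$ term contributes under $\rest_1$; commuting $(\varepsilon^{2,1})^k$ past $M_{\Psi_2}$ produces only $M_{\Psi_1}$-corrections, which are killed by $\rest_1$. Applied with $k=0$ and parameter $\lambda-me_3$ (the output of $\calD_3^m$) this gives
\[
\rest_1\circ\calF_1^n\circ\calD_3^m=(\lambda_1-\lambda_3+m+1)_n\,{\det}_H^n\,\rest_1\circ(\varepsilon^{2,1})^n\circ\calD_3^m,
\]
extracting the desired prefactor.

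For the $\calD_3$-block, I work with the PBW monomials $P_{a,b,c}:=(\varepsilon^{2,1})^a(\varepsilon^{3,2})^b(\varepsilon^{3,1})^c$. Applying Leibniz and using the derivation table immediately gives
\[
\rest_1\circ P_{a,b,c}\circ M_{\Phi_1}=a\rest_1\circ P_{a-1,b,c},\quad \rest_1\circ P_{a,b,c}\circ M_{\Phi_2}=\rest_1\circ P_{a,b,c},\quad \rest_1\circ P_{a,b,c}\circ M_{\Phi_3}=0.
\]
Substituting the column expansion of $\calD_3(\mu)$ and using the Heisenberg identity above to return products to PBW normal form yields the key one-step recursion
\[
\rest_1\circ P_{a,b,c}\circ\calD_3(\mu)=(a-\mu_{3,1})\rest_1\circ P_{a,b+1,c}+a(b-\mu_{3,2})\rest_1\circ P_{a-1,b,c+1}.
\]
Iterating this recursion $m$ times on $\rest_1\circ P_{n,0,0}\circ\calD_3(\lambda-(m-1)e_3)\circ\cdots\circ\calD_3(\lambda)$ expresses the left-hand side as a sum over subsets $S\subseteq\{1,\ldots,m\}$ indexing the ``type-2 steps'' (those where the second summand of the recursion is taken). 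A subset with $|S|=i$ lands at $\rest_1\circ P_{n-i,m-i,i}$, and the $a$-factors contributed by type-2 steps telescope to $n(n-1)\cdots(n-i+1)=(n-i+1)_i$ since $a$ drops by exactly one at each type-2 step. Pulling out this factor defines the constants $a_i(m)\in\CC$ of the statement. The two extreme paths are immediate bookkeeping: $S=\emptyset$ (all type-1, with $a$ stuck at $n$) telescopes to $(\lambda_1-\lambda_3+n+1)_m$, giving $a_0(m)$; and $S=\{1,\ldots,m\}$ (all type-2, with $b$ stuck at $0$) telescopes to $(\lambda_2-\lambda_3+1)_m$, giving $a_m(m)$.

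The main obstacle is the one-step recursion above: commuting $P_{a,b,c}$ past the multiplication operators $M_{\Phi_i}$ via the derivation table and then reducing the result back to PBW normal form via the Heisenberg identity is the only delicate computation, and carefully tracking the combinatorial factors $a$ and $b$ produced at each step is what eventually yields the falling factorial $(n-i+1)_i$. Once that identity is in hand, the remainder of the proof is mechanical bookkeeping over binary sequences of length $m$.
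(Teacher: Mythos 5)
Your proof is correct and follows essentially the same strategy as the paper's: first peel off the $\calF_1^n$ block under $\rest_1$ to extract the prefactor $(\lambda_1-\lambda_3+m+1)_n\det_H^n$, then analyze $\rest_1\circ(\varepsilon^{2,1})^n\circ\calD_3^m$ via the one-step reduction
\[
\rest_1\circ P_{a,b,c}\circ\calD_3(\mu)=(a-\mu_{3,1})\,\rest_1\circ P_{a,b+1,c}+a(b-\mu_{3,2})\,\rest_1\circ P_{a-1,b,c+1},
\]
which is exactly the identity the paper derives. I verified this recursion, the derivation table, the PBW/Heisenberg reduction, and the endpoint evaluations: all check out, and your telescoping for $a_0(m)$ and $a_m(m)$ reproduces the paper's answers exactly.

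Two small points where you go beyond what the paper writes, in ways that actually tighten the argument. First, you explicitly strengthen the $\calF_1$-block claim to $\rest_1\circ(\varepsilon^{2,1})^k\circ\calF_1^n(\mu)=(\mu_1-\mu_3+1)_n\det_H^n\rest_1\circ(\varepsilon^{2,1})^{k+n}$ for all $k\geq 0$; the paper states only the $k=0$ case but implicitly needs this $k$-strengthening to close the induction on $n$, since the inductive step produces an extra $\varepsilon^{2,1}$ in front. Second, where the paper argues by linear independence of the $\rest_1\circ P_{n-i,m-i,i}$ to extract a recurrence for $a_i(m;\lambda)$ (which it does not solve except at the endpoints), you instead expand the full iteration as a sum over binary sequences of type-$1$/type-$2$ choices; this avoids the linear-independence step, gives the coefficients directly, and makes the pulled-out factor $(n-i+1)_i$ and the two extreme products transparent. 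Both organizations are equivalent, but yours is a bit more self-contained.

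One cosmetic remark: you write $\varepsilon^{2,1}\Psi_2=\Psi_1$, whereas the paper records $\varepsilon^{2,1}\Psi_2=-\Psi_1$. A direct calculation with $\Psi_2(g)=g_{11}g_{23}-g_{13}g_{21}$ and $\Psi_1(g)=g_{12}g_{23}-g_{13}g_{22}$ gives $\varepsilon^{2,1}\Psi_2=\Psi_1$, so your sign is the correct one (the discrepancy is immaterial here since $\rest_1\circ M_{\Psi_1}=0$ either way, but it is worth knowing the paper has a sign typo in that relation).
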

\begin{proof}
    We first prove the formula for $m=0$. Thus we have to prove:
     \begin{equation*}
        \rest_1\circ \calF_1^n=(\lambda_1-\lambda_3 )_n{\det}_H^n \rest_1\circ \left(\varepsilon^{2,1}\right)^n.
    \end{equation*}
We have that
\[
\calF_1=\Psi_3(\varepsilon^{3,2}\varepsilon^{2,1}+\lambda_{2,1}\varepsilon^{3,1})-\lambda_{3,1}\Psi_2\varepsilon^{2,1}+\lambda_{3,1}\lambda_{2,1}\Psi_1.
\]
Together with the relations
\begin{align*}
 \varepsilon^{2,1}\Psi_1&=\varepsilon^{2,1}\Psi_3=0,\qquad &\varepsilon^{2,1}\Psi_2&=-\Psi_1,\\
 \rest_1\circ \Psi_2&={\det}_H,\qquad &\rest_1\circ \Psi_1&=\rest_1\circ \Psi_3=0, 
\end{align*}
one quickly prove the result by induction on $n$ after paying attention to the parameters of the operators.

Now for $m\neq 0$, we use the following formulas
\[
\calD_3=\Phi_1(\varepsilon^{2,1}\varepsilon^{3,2}-\lambda_{3,2}\varepsilon^{3,1})-\lambda_{3,1}\Phi_2\varepsilon^{3,2}+\lambda_{3,1}\lambda_{3,2}\Phi_3,
\]
and 
\[
\varepsilon^{a,b}\Phi_c=\delta_{b,c} \Phi_a,\qquad \rest_1 \Phi_2=1,\qquad \rest_1\Phi_1=\rest_1\Phi_2=0. 
\]
Using also the commutation relation $[\varepsilon^{3,2},\varepsilon^{2,1}]=\varepsilon^{3,1}$, this leads to 

\begin{multline*}
 \rest_1\circ \left(\varepsilon^{2,1}\right)^{n-i}\left(\varepsilon^{3,2}\right)^{m-i}\left(\varepsilon^{3,1}\right)^i\circ \calD_3\\
 =(\lambda_1-\lambda_3+n-i+1)\rest_1\circ \left(\varepsilon^{2,1}\right)^{n-i}\left(\varepsilon^{3,2}\right)^{m+1-i}\left(\varepsilon^{3,1}\right)^i\\
 +(n-i)(\lambda_2-\lambda_3+m-i+1)\rest_1\circ\left(\varepsilon^{2,1}\right)^{n-i-1}\left(\varepsilon^{3,2}\right)^{m-i}\left(\varepsilon^{3,1}\right)^{i+1}. 
\end{multline*}
The family $\rest_1\circ \left(\varepsilon^{2,1}\right)^{n-i}\left(\varepsilon^{3,2}\right)^{m-i}\left(\varepsilon^{3,1}\right)^i$ is linearly independent. Indeed, expanding the expression we can see that the term $g_{23}^{m}\rest_1\circ \partial_{13}^{n-i}\partial_{22}^{m-i}\partial_{21}^i$ only appear in the $i$-th element of the family. We introduce $\lambda'=(\lambda_1,\lambda_2,\lambda_3-1)$, which is the parameter corresponding to the image of $\calD_3$, together with the notation $a_i(m;\lambda)$ to keep track of the parameters in the proof. This leads to the following recurrence formula for $1\leq i\leq m-1$:
\begin{multline*}
  (n-i+1)_ia_i(m+1;\lambda)=(n-i+1)_i(\lambda_1-\lambda_3+n-i+1)a_i(m;\lambda')\\
  +(\lambda_2-\lambda_3+m-i)(n-i+1)(n-i+2)_{i-1}a_{i-1}(m;\lambda'),  
\end{multline*}
which shows the general form of the operator. For $i=0$ we have the recurrence formula
\[
a_0(m+1)=(\lambda_1-\lambda_3+n+1)a_0(m),
\]
with $a_0(0)=1$ which gives the result for $a_0$ after paying attention to the parameters $\lambda$ of the operators.
Finally, for the last coefficient, we have
\[
a_{m+1}(m+1;\lambda)=(\lambda_2-\lambda_3+1)a_m(m;\lambda'),
\]
with $a_0(0)=1$. This completes the proof. 
\end{proof}

The following lemma construct some differential intertwining operators between principal series representations of $G$ (respectively of $H$).
\begin{lemma}\label{lem:epsilonDiffOp}
If there exists $k_0\in \N$ such that $\lambda_3-\lambda_2=k_0$ then the operator $\left(\varepsilon^{3,2}\right)^{k_0}$ is in $\Hom_H(\pi_{\xi,\lambda},\allowbreak\pi_{\xi',\lambda'})$ with $\xi'=(\xi_1,\xi_2+[k_0],\xi_3+[k_0])$ and $\lambda'=(\lambda_1,\lambda_2+k_0,\lambda_3-k_0)$.

Similarly, if there exists $k_0\in \N$ such that $\nu_2-\nu_1=k_0$ then the operator $\left(\varepsilon^{2,1}_H\right)^{k_0}$ is in $\Hom_H(\tau_{\eta,\nu},\tau_{\eta',\nu'})$ with $\eta'=(\eta_1+[k_0],\eta_2+[k_0])$ and $\nu'=(\nu_1+k_0,\nu_2-k_0)$. Here $\varepsilon_H$ denotes the differential operators \eqref{eq:DefVarepsilon} for the subgroup $H$. 
\end{lemma}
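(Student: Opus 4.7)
The key observation is that the operators $\varepsilon^{i,j}=dr(E_{i,j})$ are differentials of the right-regular action and therefore automatically commute with the left-regular $G$-action. Consequently, any operator built as a polynomial in the $\varepsilon^{i,j}$'s is automatically $G$-intertwining for the left action, in particular $H$-intertwining. The only genuine content of the lemma is therefore to verify that $(\varepsilon^{3,2})^{k_0}f$ lies in $\pi_{\xi',\lambda'}$, i.e.\ that it satisfies the required right $P_G$-equivariance, precisely when $\lambda_3-\lambda_2=k_0$.

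The plan is to reduce this to the infinitesimal version: for $X\in \mathfrak{p}_G=\frakm_G\oplus\fraka_G\oplus\frakn_G$ and $f\in\pi_{\xi,\lambda}$, one needs
\[
dr(X)(\varepsilon^{3,2})^{k_0}f = [dr(X),(\varepsilon^{3,2})^{k_0}]f + (\varepsilon^{3,2})^{k_0}dr(X)f
\]
to act on $f$ by the infinitesimal character of $\xi'\otimes e^{\lambda'}\otimes 1$. Since $dr(X)f$ is already determined for $X\in\mathfrak{p}_G$ (as zero for $X\in\mathfrak{n}_G$, and as the scalar $-(\lambda+\rho_G)(X)f$ for $X\in\fraka_G$), the computation reduces to evaluating commutators in the universal enveloping algebra. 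For $X\in\fraka_G$ the commutator $[dr(E_{i,i}),\varepsilon^{3,2}]=(\delta_{i,3}-\delta_{i,2})\varepsilon^{3,2}$ immediately yields the shift $\lambda\mapsto \lambda'=(\lambda_1,\lambda_2+k_0,\lambda_3-k_0)$, and an analogous $\Ad(m)$ calculation on $M_G$ yields the $[k_0]$-shift in $\xi$.

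The main check is for $X\in\frakn_G$. The commutators $[E_{1,2},E_{3,2}]=0$ and $[E_{1,3},E_{3,2}]=-E_{1,2}$ show, after a short induction, that $[dr(E_{1,2}),(\varepsilon^{3,2})^{k_0}]=0$ and $[dr(E_{1,3}),(\varepsilon^{3,2})^{k_0}]=-k_0(\varepsilon^{3,2})^{k_0-1}dr(E_{1,2})$, both of which annihilate $f$. The nontrivial case is $X=E_{2,3}$: together with $F=E_{3,2}$ and $H=E_{2,2}-E_{3,3}$ this forms an $\mathfrak{sl}_2$-triple, and the standard $\mathfrak{sl}_2$ identity
\[
[dr(E_{2,3}),(\varepsilon^{3,2})^{k}] = k\,(\varepsilon^{3,2})^{k-1}\bigl(dr(H)-(k-1)\bigr),
\]
combined with $dr(H)f=-(\lambda_2-\lambda_3+1)f=(k_0-1)f$, gives $[dr(E_{2,3}),(\varepsilon^{3,2})^{k_0}]f=k_0(k_0-k_0)(\varepsilon^{3,2})^{k_0-1}f=0$ exactly at $k=k_0$. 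This is precisely where the hypothesis $\lambda_3-\lambda_2=k_0$ enters, and it is the only delicate step.

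For the second statement, the same argument applies verbatim inside $H=\GL_n(\R)$: the operator $(\varepsilon^{2,1}_H)^{k_0}$ is built from the right-regular action on $H$, which commutes with the left $H$-action, and the role of the $\mathfrak{sl}_2$-triple is now played by $E_{1,2}^H$, $E_{2,1}^H$, $E_{1,1}^H-E_{2,2}^H$ inside $\mathfrak{gl}_2\subset\mathfrak{gl}_n$. The condition $\nu_2-\nu_1=k_0$ is the exact analogue guaranteeing the vanishing of the key commutator, and the shift of $(\eta,\nu)$ to $(\eta',\nu')$ is read off from the same abelian and $M_H$-computations as before.
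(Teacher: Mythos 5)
Your argument is correct and reaches the same conclusion, but by a different route than the paper. The paper's proof of the second statement (which is what it actually writes out; the first is said to be similar) works at the \emph{group} level: it derives the closed formula
\[
(\varepsilon^{2,1}_H)^\ell f(gn(y))=\sum_{j=0}^\ell \binom{\ell}{j}(\nu_2-\nu_1-\ell)_j\, y^j(\varepsilon^{2,1}_H)^{\ell-j} f(g),
\]
by induction, and then observes that at $\ell=\nu_2-\nu_1=k_0$ the Pochhammer symbol $(\nu_2-\nu_1-\ell)_j=(0)_j$ kills every term with $j\geq 1$, so $(\varepsilon^{2,1}_H)^{k_0}f$ is $N_H$-invariant. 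You instead work entirely \emph{infinitesimally}, reducing to commutator computations in $U(\frakg)$: the easy commutations with $\fraka_G$ and with $E_{1,2},E_{1,3}$, and the genuinely delicate commutation with $E_{2,3}$ handled by the standard $\mathfrak{sl}_2$ identity $[E,F^k]=kF^{k-1}(H-(k-1))$. The two arguments are really the same mathematics in two guises: the vanishing Pochhammer $(0)_j$ in the paper is the integrated version of the vanishing factor $dr(H)-(k_0-1)$ in your $\mathfrak{sl}_2$ step. Each has its merits: the paper's integrated formula gives global $N_H$-invariance directly and also records useful information for non-critical $\ell$, while your infinitesimal argument isolates cleanly which nilpotent direction is delicate and why, and scales more naturally to the first part of the lemma where $N_G$ has three generators instead of one.

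Two trivial slips: $[E_{1,3},E_{3,2}]=E_{1,2}$, not $-E_{1,2}$ (immaterial, since the resulting term is proportional to $dr(E_{1,2})f=0$); and ``$k_0(k_0-k_0)$'' should read $k_0\cdot\bigl((k_0-1)-(k_0-1)\bigr)=0$. You should also note explicitly that $N_G$, $A_G$ being connected and simply connected means the infinitesimal equivariance implies the group equivariance, while $M_G$ is finite and requires the separate $\Ad(m)$ check you mention.
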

\begin{proof}
    The proof is similar in both cases. Thus we only do it for the operator $\left(\varepsilon^{2,1}_H\right)^{k_0}$. Recall that $\varepsilon^{2,1}_H$ is the derivative of the right regular action by $E_{2,1}$. It is then clear that it intertwines the left regular action between $\tau_{\eta,\nu}$ and $\tau_{\eta',\nu'}$. We just have to check that it goes between the right spaces. 
    
    Let $ma=\diag(m_1a_1,m_2a_2)\in M_HA_H$ then we have
    \[
    (\varepsilon^{2,1}_H f)(gma)=\left.\frac{d}{dt}\right|_{t=0}f(gma e^{tE_{2,1}})=\left.\frac{d}{dt}\right|_{t=0}f(g e^{tm_1m_2a_1^{-1}a_2E_{2,1}}ma),
    \]
    which is then equal to $|m_1a_1|^{-\nu_1-1-\frac{1}{2}}_{\eta_1+1}|m_2a_2|^{-\nu_2+1+\frac{1}{2}}_{\eta_2+1}(\varepsilon_H^{2,1}f)(g)$. Induction on $k_0$ shows 
    \[
    ((\varepsilon^{2,1}_H)^{k_0} f)(gma)=|m_1a_1|^{-\nu_1-k_0-\frac{1}{2}}_{\eta_1+[k_0]}|m_2a_2|^{-\nu_2+k_0+\frac{1}{2}}_{\eta_2+[k_0]}((\varepsilon_H^{2,1})^{k_0}f)(g).
    \]

    We are left to show that $(\varepsilon_H^{2,1})^{k_0}f$ is invariant under right multiplication by $n\in N_H$. This is based on the following formula for $\ell\in \N$:
    \[
    (\varepsilon^{2,1}_H)^\ell f (gn(y))=\sum_{j=0}^\ell \binom{\ell}{j}(\nu_2-\nu_1-\ell)_j y^j(\varepsilon^{2,1})^{\ell-j} f(g).\qquad (n(y)=\begin{pmatrix}
        1&y\\0&1
    \end{pmatrix})
    \]
    This formula is obtained using
    \[
    (\varepsilon^{2,1}_H)^{\ell+1}f(gn(y))=\left.\frac{d}{dt}\right|_{t=0} (\varepsilon^{2,1}_H)^\ell f\left( g\begin{pmatrix}
        1&0\\ \frac{t}{1+ty}&1
    \end{pmatrix}\begin{pmatrix}
        1+ty&0\\ 0&(1+ty)^{-1}
    \end{pmatrix} n\left(\frac{y}{1+ty}\right)\right),
    \]
    and then using induction on $\ell$.    
    Finally, if $\nu_2-\nu_1=k_0$ we have:
    \[
    (\varepsilon^{2,1}_H)^{k_0} f (gn(y))=(\varepsilon^{2,1}_H)^{k_0} f (g),
    \]
    which ends the proof. 
\end{proof}

\begin{remark}
Notice than when $\nu_2-\nu_1 -[\eta_1+\eta_2]=2j$ the operators from Lemma \ref{lem:epsilonDiffOp} operators are obtained as residues of Knapp--Stein intertwining operators as explained at the end of Section \ref{sec:Knapp-Stein}.
\end{remark}

We are now equipped to prove Proposition \ref{prop:DSBOk=1}.
\begin{proof}[Proof of Proposition \ref{prop:DSBOk=1}]

In case $(1)$ of Proposition \ref{prop:DSBOk=1} a close examination at the formula in Lemma \ref{lemma:OperatorForMultiplicityTwo} shows the desired result.

In case (2) of Proposition \ref{prop:DSBOk=1} according to Lemma \ref{lemma:OperatorForMultiplicityTwo} we have $ B_0\neq 0$ for our set of parameters since the coefficient in front of $\left(\varepsilon^{2,1}\right)^{n_1-\ell_0-1}\left(\varepsilon^{3,2}\right)^{n_2-\ell_0-1}$ is 
    \[
    (-n_1)_{n_1-\ell_0-1}(-n_2)_{n_2-\ell_0-1}\neq 0. 
    \]
Composing after restriction by a non zero differential operator will then give a non zero differential operator. 

Notice that 
\[ 
\left(\varepsilon_H^{2,1}\right)^{\ell_0+1}\circ \rest_1=\rest_1\circ \left(\varepsilon^{3,1}\right)^{\ell_0+1}.  
\]
 Using the fact that $\varepsilon^{3,1}$ commutes with $\varepsilon^{3,2}$ and $\varepsilon^{2,1}$ this gives
\begin{multline*}
\left(\varepsilon_H^{2,1}\right)^{\ell_0+1}\circ B_0=
        (-n_1)_{n_1-\ell_0-1}{\det}_H^{n_1-\ell_0-1}\\
        \times\sum_{i=0}^{n_2-\ell_0-1} (n_1-\ell_0-1-i+1)_i a_i \rest_1\circ \left(\varepsilon^{2,1}\right)^{n_1-\ell_0-1-i}\left(\varepsilon^{3,2}\right)^{n_2-\ell_0-1-i}\left(\varepsilon^{3,1}\right)^{i+\ell_0+1}.
\end{multline*}
As shown in Lemma \ref{lemma:OperatorForMultiplicityTwo}, the renormalized operator $B_1$ is explicitly given by:
\[
    {\det}_H^{n_1} 
        \sum_{i=0}^{k_0} (n_1-i+1)_i a_i \rest_1\circ \left(\varepsilon^{2,1}\right)^{n_1-i}\left(\varepsilon^{3,2}\right)^{k_0-i}\left(\varepsilon^{3,1}\right)^i
\]
with $a_{k_0}=(n_2-k_0+1)_{k_0}.$ Since $i\leq k_0\leq n_1-1$ we have $(n_1-k_0+1)_{k_0}a_{k_0}\neq 0$ and thus the operator is non zero.  Finally we have
\[
 B_1\circ \left(\varepsilon^{3,2}\right)^{n_2-k_0} =
        \sum_{i=0}^{k_0}(n_1-i+1)_i a_i \rest_1\circ \left(\varepsilon^{2,1}\right)^{n_1-i}\left(\varepsilon^{3,2}\right)^{n_2-i}\left(\varepsilon^{3,1}\right)^i.
\]
From their explicit expression one can see that they are not proportional. 
\end{proof}

\subsection{Support at \texorpdfstring{$x_2P_G$}{x2PG}}
As mentioned earlier, we give the results without proofs in this case to avoid repetition with previous subsection.

Let us introduce 
\[n_1:=\nu_1-\lambda_1-\frac{1}{2},\qquad\text{and}\qquad \ n_2:=\nu_1+\nu_2-\lambda_1-\lambda_2-1,\] 
with $N:=\min(n_1,n_2)$. A similar analysis, using a recurrence relation, as for when the support was located at $x_1P_G$ leads to the following.
\begin{proposition}\label{prop:SDBOforIota2}
    We have the following two cases for $\DSBO_2(\pi_{\xi,\lambda},\tau_{\eta,\nu})$:
    \begin{enumerate}
        \item  If $n_1,n_2\in \N$ and if $n_1\leq n_2$ or if there exists $0\leq k_0 \leq n_2$ such that $\lambda_1-\lambda_2+n_1-n_2+k_0=0$, together with
   \[   
    \eta_1=\xi_1+[n_1] , \qquad\text{and}\qquad
    \eta_2=\xi_2+[n_1+n_2] ,
    \]
    then
    \[
    \dim(\DSBO_2(\pi_{\xi,\lambda},\tau_{\eta,\nu}))=1.
    \]
    Moreover, the distribution kernel of a basis of $\DSBO_2(\pi_{\xi,\lambda},\tau_{\eta,\nu})$ is given by: 
\[
\sum_{j=0}^N \frac{(-1)^j(-n_1)_j(\lambda_1-\lambda_2+n_1-n_2+1)_j}{j!} \delta^{(n_1-j)}(x)\delta^{(n_2-j)}(y)\delta^{(j)}(z).
\]
        \item Otherwise
        \[
        \dim(\DSBO_2(\pi_{\xi,\lambda},\tau_{\eta,\nu}))=0. 
        \]
    \end{enumerate}
\end{proposition}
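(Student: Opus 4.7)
My plan is to adapt the three-step method used in the $k=1$ case (Proposition \ref{prop:DSBOk=1}): classify the distributional kernels supported at $x_2 P_G$ by finding the differential equations they must satisfy, reducing the problem to a recurrence for the unknown coefficients, and then solving it. The key difference from the $k = 1$ situation is that the resulting recurrence is only first order, so the multiplicity-two phenomenon cannot occur and the analysis is strictly simpler.

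Since $x_2 = I_3$, any $K \in \calD'(G)_{\xi,\lambda}^{\eta,\nu}$ supported at $x_2 P_G$ restricts on the dense Bruhat chart $\overline N_G \simeq \RR^3$ to a distribution $K_2(\overline n) = K(\overline n)$ supported at the origin, which admits the standard ansatz
\[
K_2 = \sum_{i_x,i_y,i_z \ge 0} a(i_x,i_y,i_z)\, \delta^{(i_x)}(x)\delta^{(i_y)}(y)\delta^{(i_z)}(z).
\]
First I would apply Lemma \ref{lemm:DifferentialEquationGeneral} with $\ell = 1$ and $\ell = 2$; the two resulting homogeneity relations force $i_x + i_z = n_1$ and $i_y + i_z = n_2$, so the ansatz collapses to the one-parameter family $K_2 = \sum_{j=0}^N c_j \delta^{(n_1-j)}(x)\delta^{(n_2-j)}(y)\delta^{(j)}(z)$ with $N = \min(n_1,n_2)$. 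A third independent equation then has to be produced from the $N_H$-part of the $P_H$-equivariance: for $n_H(t) = I_3 + tE_{12}\in N_H$, a direct calculation gives the Bruhat decomposition $n_H(t)\overline n = \overline n'(t)\cdot m_G a_G \cdot n_G$ with $a_G = \diag(1+tx,(1+tx)^{-1},1)$ and new lower-unitriangular coordinates $(x/(1+tx),\, y(1+tx)-tz,\, z/(1+tx))$. Combining this with the left invariance $K(n_H(t)\overline n) = K(\overline n)$, differentiating in $t$ at $t=0$, and tracking the $\rho_G$-twist coming from the right $P_G$-equivariance of $K$ then yields a single extra PDE of the form
\[
\bigl(-x^2\partial_x + (xy-z)\partial_y - xz\partial_z\bigr) K_2 = (1+\lambda_2-\lambda_1)\, x\, K_2.
\]

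Next, plugging the reduced ansatz into this extra PDE and matching coefficients of the delta distributions yields a first-order recurrence of the shape
\[
(j+1)\, c_{j+1} = (n_1-j)(\lambda_1-\lambda_2+n_1-n_2+j+1)\, c_j,
\]
whose (unique up to scalar) solution is precisely the Pochhammer product displayed in the statement. The classification then follows from the consistency of the recurrence at the boundary $j = N$: if $n_1 \le n_2$, the factor $n_1-N$ kills the right-hand side at $j = N$, so a non-zero solution always exists; if $n_1 > n_2$ (so $N = n_2$), consistent termination requires that one of the factors $\lambda_1-\lambda_2+n_1-n_2+k_0$ with $0 \le k_0 \le n_2$ vanish, which is exactly the second alternative of case (1). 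In all other parameter regimes the recurrence forces $c_0 = 0$ and hence $K_2 = 0$, yielding case (2). Finally, the parity conditions $\eta_1 = \xi_1 + [n_1]$ and $\eta_2 = \xi_2 + [n_1+n_2]$ fall out of the $\gamma_\ell(-1), \delta_\ell(-1)$ part of Lemma \ref{lemm:DifferentialEquationGeneral} in the same way as in the $k=1$ proof.

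The main obstacle is the careful derivation of the third PDE: one needs both the explicit Bruhat decomposition above and correct bookkeeping of the $\rho_G$-shift in the right $P_G$-equivariance in order to pin down the coefficient of $xK_2$ and thereby nail down the recurrence coefficient. Everything else reduces to elementary manipulations with derivatives of delta functions and is noticeably shorter than the $k=1$ proof because no higher-order recurrence or multiplicity-two phenomenon ever appears.
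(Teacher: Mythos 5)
Your strategy is exactly the paper's (the paper omits the $k=2$ proof, saying only that the analysis parallels $k=1$): reduce to a distributional kernel supported at the origin of $\overline N_G$, use the two homogeneity relations from Lemma \ref{lemm:DifferentialEquationGeneral} to collapse the ansatz, and derive a third PDE from the $N_H$-part of the $P_H$-equivariance. Your Bruhat decomposition of $n_H(t)\overline n$ and the resulting PDE $\bigl(-x^2\partial_x + (xy-z)\partial_y - xz\partial_z\bigr)K_2 = (1+\lambda_2-\lambda_1)x\,K_2$ are both correct.

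The error is in the passage from the PDE to the recurrence. Feeding $K_2 = \sum_{j=0}^N c_j\,\delta^{(n_1-j)}(x)\delta^{(n_2-j)}(y)\delta^{(j)}(z)$ into your PDE, the terms $-x^2\partial_x$, $xy\partial_y$, $-xz\partial_z$ and $-(1+\lambda_2-\lambda_1)x$ all produce multiples of $\delta^{(n_1-j-1)}(x)\delta^{(n_2-j)}(y)\delta^{(j)}(z)$, while $-z\partial_y$ applied to the $(j+1)$-st term contributes $(j+1)c_{j+1}$ to the same basis vector; collecting coefficients gives
\[
(j+1)\,c_{j+1} = (n_1-j)\bigl(\lambda_1-\lambda_2+n_1-n_2+j\bigr)\,c_j,
\]
without the ``$+1$'' you wrote. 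A quick concrete check: for $n_1=n_2=1$ the PDE forces $c_1 = (\lambda_1-\lambda_2)c_0$, not $(\lambda_1-\lambda_2+1)c_0$. This is also the only version consistent with the termination analysis you give at $j=N$: when $n_1>n_2$ the equation at $j=n_2$ reads $(n_1-n_2)(\lambda_1-\lambda_2+n_1)c_{n_2}=0$, and running the recurrence, the factor $\lambda_1-\lambda_2+n_1-n_2+j$ must vanish for some $j\in\{0,\dots,n_2\}$ --- precisely the $0\le k_0\le n_2$ in the statement. Your recurrence would instead force $\lambda_1-\lambda_2+n_1-n_2+m=0$ for some $m\in\{1,\dots,n_2+1\}$, contradicting the stated range.

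So the correct kernel is $\sum_{j=0}^N\frac{(-1)^j(-n_1)_j(\lambda_1-\lambda_2+n_1-n_2)_j}{j!}\delta^{(n_1-j)}(x)\delta^{(n_2-j)}(y)\delta^{(j)}(z)$; the ``$+1$'' in the displayed Pochhammer symbol of the proposition appears to be a typographical slip (note the $k=0$ analogue in Proposition \ref{prop:SDBOforIota0} carries no such shift, and its recurrence is compatible with the same $0\le k_0\le n_2$ condition). Your proposal reproduced the typo and reverse-engineered a recurrence to match it, which then clashes with your own, correct, termination argument. Everything else in the write-up --- the ansatz, the two homogeneity constraints, the derivation of the PDE, and the overall first-order structure explaining why there is no multiplicity-two phenomenon here --- is sound.
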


As in the support located at $x_1P_G$ case we also provide a construction of a basis of $\DSBO_2(\pi_{\xi,\lambda},\tau_{\eta,\nu})$ using the operators $\calF_i$, $\calD_i$ and $\varepsilon^{2,1}_H$.
\begin{proposition}  
    For $n_1,n_2\in \N$ and 
    \[   
    \eta_1=\xi_1+[n_1], \qquad\text{and}\qquad 
    \eta_2=\xi_2+[n_1+n_2],
    \] we have two different cases:
    \begin{enumerate}
        \item If $n_1\leq n_2$ then a basis of $ \DSBO_2(\pi_{\xi,\lambda},\tau_{\eta,\nu})$ is given by
    \[
    \rest_2 \circ \calF_1^{n_1}\circ \calF_2^{n_2-n_1}.
    \]
        \item If $n_1>n_2$ and there exists $0\leq k_0 \leq n_2$ such that $\lambda_1-\lambda_2+n_1-n_2+k_0=0$ then a basis of $ \DSBO_2(\pi_{\xi,\lambda},\tau_{\eta,\nu})$ is given by
        \[
        \left(\varepsilon^{2,1}_H\right)^{n_1-k_0}\circ \rest_2 \circ \calF_1^{k_0} \circ \calF_2^{n_2-k_0}.
        \]
    \end{enumerate}
\end{proposition}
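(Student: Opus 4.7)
The plan is to leverage the one-dimensionality of $\DSBO_2(\pi_{\xi,\lambda},\tau_{\eta,\nu})$ established by Proposition~\ref{prop:SDBOforIota2}: in both cases of the statement this space is one-dimensional, so it suffices to exhibit a non-zero element of the correct type. I would begin by checking that the displayed operators are indeed $H$-intertwining into $\tau_{\eta,\nu}$. In case~(1) this is automatic from the general theory of Section~3: the composition $\rest_2\circ\calF_1^{n_1}\circ\calF_2^{n_2-n_1}$ is $\rest_2\circ\calL_{\alpha,2}$ with $\alpha=(n_1,n_2-n_1)$, and the hypothesis $n_1\leq n_2$ together with the parity conditions on $\eta$ ensures that the target is $\tau_{\eta_{\alpha,2}(\xi),\nu_{\alpha,2}(\lambda)}=\tau_{\eta,\nu}$. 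In case~(2), the image of $\rest_2\circ\calF_1^{k_0}\circ\calF_2^{n_2-k_0}$ is $\tau_{\tilde\eta,\tilde\nu}$ with $\tilde\nu=(\lambda_1+\tfrac12+k_0,\lambda_2+\tfrac12+n_2-k_0)$, and the resonance condition $\lambda_1-\lambda_2+n_1-n_2+k_0=0$ translates exactly into $\tilde\nu_2-\tilde\nu_1=n_1-k_0$, which by Lemma~\ref{lem:epsilonDiffOp} is the requirement for $(\varepsilon^{2,1}_H)^{n_1-k_0}$ to be an $H$-intertwining operator $\tau_{\tilde\eta,\tilde\nu}\to\tau_{\eta,\nu}$.

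The real content is the non-vanishing. The strategy is to establish an explicit expansion of the form
\[
\rest_2\circ\calF_1^{n}\circ\calF_2^{m}=(\det|_H)^{n+m}\sum_{a+b+c\,=\,\ast}c_{a,b,c}(\lambda)\rest_2\circ(\varepsilon^{2,1})^a(\varepsilon^{3,1})^b(\varepsilon^{3,2})^c,
\]
where the coefficients $c_{a,b,c}(\lambda)$ are explicit polynomials in $\lambda_1-\lambda_2$ and $\lambda_2-\lambda_3$. The derivation mirrors the proof of Lemma~\ref{lemma:OperatorForMultiplicityTwo}: proceed by induction on $n+m$, using the explicit determinantal formulas~\eqref{eq:calF} for $\calF_1$ and $\calF_2$, the action $\varepsilon^{a,b}\Psi_c=\delta_{b,c}\Psi_a$, the commutation relation $[\varepsilon^{3,2},\varepsilon^{2,1}]=\varepsilon^{3,1}$, and the vanishing $\rest_2\circ\Psi_i=\delta_{i,2}\det|_H$. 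The operators $\rest_2\circ(\varepsilon^{2,1})^a(\varepsilon^{3,1})^b(\varepsilon^{3,2})^c$ are linearly independent (by testing against suitable monomials on $\overline{N}_G$), so identifying a single coefficient $c_{a,b,c}(\lambda)$ that is non-zero for the given parameters suffices.

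For case~(1) with $(n,m)=(n_1,n_2-n_1)$, I expect the extreme term (for example with maximal $a$) to carry a coefficient of the form $\prod_{j}(\lambda_2-\lambda_3+\cdots)_j$, which is non-zero without any further arithmetic constraint on $\lambda$. For case~(2), the key identity is $(\varepsilon_H^{2,1})^{n_1-k_0}\circ\rest_2=\rest_2\circ(\varepsilon^{2,1})^{n_1-k_0}$ (valid because $x_2=I_3$ intertwines the two embeddings trivially), so the full operator in case~(2) equals $\rest_2\circ(\varepsilon^{2,1})^{n_1-k_0}\circ\calF_1^{k_0}\circ\calF_2^{n_2-k_0}$, i.e.\ the expansion above with every exponent of $\varepsilon^{2,1}$ shifted up by $n_1-k_0$. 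The resonance $\lambda_1-\lambda_2+n_1-n_2+k_0=0$ is precisely what causes the direct source operator $\rest_2\circ\calF_1^{n_1}\circ\calF_2^{n_2}$ to vanish, but by picking a term in the expansion whose coefficient does not involve the vanishing Pochhammer factor, one recovers a non-zero operator.

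The main obstacle is obtaining the expansion and isolating a guaranteed non-vanishing coefficient. Compared to Lemma~\ref{lemma:OperatorForMultiplicityTwo}, where $\calF_1$ and $\calD_3$ involve essentially complementary derivatives and the recurrence decouples cleanly, here $\calF_1$ and $\calF_2$ both involve $\varepsilon^{3,2}$ and $\Psi_2,\Psi_3$ simultaneously, so the induction mixes all three exponents $a,b,c$ at each step. The bookkeeping to identify a coefficient that remains non-zero under the stated (non-generic) hypotheses, while avoiding the single resonance in case~(2), is the delicate part of the argument.
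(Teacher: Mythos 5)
Your strategy --- reduce to a non-vanishing claim via Proposition~\ref{prop:SDBOforIota2}, verify the parameter bookkeeping, and then adapt the computation of Lemma~\ref{lemma:OperatorForMultiplicityTwo} from $\rest_1\circ\calF_1^n\circ\calD_3^m$ to $\rest_2\circ\calF_1^n\circ\calF_2^m$ --- is the right one, and it is exactly what the paper's remark (``a similar analysis as previously leads to the following'') indicates. The parts you actually execute are correct: the identification of case~(1) with $\rest_2\circ\calL_{(n_1,n_2-n_1),2}$ together with the parity and $\nu$-shift checks; the observation that the resonance $\lambda_1-\lambda_2+n_1-n_2+k_0=0$ gives $\tilde\nu_2-\tilde\nu_1=n_1-k_0$ so that Lemma~\ref{lem:epsilonDiffOp} applies; and the identity $(\varepsilon^{2,1}_H)\circ\rest_2=\rest_2\circ\varepsilon^{2,1}$ used to move the extra factor past restriction.

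The gap is that you stop precisely where the proof begins, and you say so yourself. You do not derive the expansion of $\rest_2\circ\calF_1^n\circ\calF_2^m$ in the monomials $\rest_2\circ(\varepsilon^{2,1})^a(\varepsilon^{3,1})^b(\varepsilon^{3,2})^c$, do not verify that the triples occurring are constrained to a one-parameter family (they are: one expects $(a,b,c)=(n-i,\,i,\,n+m-i)$, matching the one-parameter kernel of Proposition~\ref{prop:SDBOforIota2}, which would dissolve your ``all three exponents mix'' worry --- but this has to be proved, not feared), and above all you do not isolate any specific coefficient and show it is non-zero under the stated non-generic hypotheses. The claim that the extreme term ``carries a coefficient of the form $\prod_j(\lambda_2-\lambda_3+\cdots)_j$, which is non-zero without any further arithmetic constraint'' is an unverified guess and is moreover suspicious: the known kernel coefficients in Proposition~\ref{prop:SDBOforIota2} involve only $\lambda_1-\lambda_2$, not $\lambda_2-\lambda_3$, so the Pochhammer that actually needs to be controlled is a different one, and the hypotheses of cases~(1) and~(2) (that $n_1\leq n_2$, respectively that a resonance $\lambda_1-\lambda_2+n_1-n_2+k_0=0$ occurs) are precisely what must be threaded through to see it survives. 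Until the $k=2$ analogue of Lemma~\ref{lemma:OperatorForMultiplicityTwo} is actually written down and a non-vanishing term exhibited in each case, the non-vanishing --- which is the entire content of the proposition once one-dimensionality is granted --- remains unproved. What you have is a correct plan, not a complete argument.
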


% One can show by induction that 
%\[
%\rest_2\circ \calF_1^n\circ \calF_2^m={\det}_H^{n+m}\rest_2\circ \prod_{k=0}^{n-1}(\varepsilon^{3,2}\varepsilon^{2,1}+\lambda_{2,1}^{(k)}\varepsilon^{3,1}) (\varepsilon^{3,2})^m
%\]
%where $\lambda_{2,1}^{(k)}=\lambda_2-\lambda_1-k-1$. 

\subsection{Support at \texorpdfstring{$x_0P_G$}{x0PG}}
We conclude this section by stating the results for support located at $x_0P_G$. The situation is very similar to the previous case. We redefine:
\[
n_1:=\lambda_3-\nu_2-\frac{1}{2},\qquad\text{and}\qquad n_2:=\lambda_2+\lambda_3-\nu_1-\nu_2-1,\
\]
with $N=\min(n_1,n_2)$. A similar analysis as previously leads to the following result.

\begin{proposition}\label{prop:SDBOforIota0}

We have the following two cases for $\DSBO_0(\pi_{\xi,\lambda},\tau_{\eta,\nu})$:
    \begin{enumerate}
        \item  If $n_1,n_2\in \N$ and if $n_1\leq n_2$ or if there exists $0\leq k_0 \leq n_2$ such that $\lambda_3-\lambda_2-n_1+k_0=0$, together with
   \[   
    \eta_1=\xi_2+[n_1], \qquad\text{and}\qquad
    \eta_2=\xi_3+[n_1+n_2],
    \]
    then
    \[
    \dim(\DSBO_0(\pi_{\xi,\lambda},\tau_{\eta,\nu}))=1.
    \]
    Moreover, the distribution kernel of a basis of $\DSBO_0(\pi_{\xi,\lambda},\tau_{\eta,\nu})$ is given by:
    \[
    \sum_{j=0}^N \frac{(-n_1)_j(\lambda_3-\lambda_2-n_1)_j}{j!} \delta^{(n_2-j)}(x)\delta^{(n_1-j)}(y)\delta^{(j)}(z).
    \]
    \item Otherwise
        \[
        \dim(\DSBO_0(\pi_{\xi,\lambda},\tau_{\eta,\nu}))=0. 
        \]
    \end{enumerate}
\end{proposition}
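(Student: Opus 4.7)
The proof follows the same template used above for the $x_1 P_G$ case. I parametrize a neighborhood of $x_0 P_G$ in $G/P_G$ by $x_0\overline{N}_G$ with the coordinates $\overline{n} = \begin{pmatrix} 1 & 0 & 0 \\ x & 1 & 0 \\ z & y & 1 \end{pmatrix}$ and consider the pulled-back distribution $K_0(\overline{n}) := K(x_0\overline{n})$, supported at the origin of $\overline{N}_G \simeq \R^3$.

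Applying Lemma \ref{lemm:DifferentialEquationGeneral} with $\ell = 1$ and $\ell = 2$ (the only range relevant for $k=0$, since the first family of equations in the lemma is empty) produces the two Euler-type equations
\[
y\partial_y + z\partial_z = -n_1 - 2, \qquad x\partial_x + z\partial_z = -n_2 - 2.
\]
Testing against a general expansion $K_0 = \sum a(i_x, i_y, i_z) \delta^{(i_x)}(x)\delta^{(i_y)}(y)\delta^{(i_z)}(z)$ these force $i_y + i_z = n_1$ and $i_x + i_z = n_2$, hence $n_1, n_2 \in \N$, and the ansatz collapses to
\[
K_0 = \sum_{j=0}^N c_j \, \delta^{(n_2 - j)}(x)\delta^{(n_1 - j)}(y)\delta^{(j)}(z), \qquad N = \min(n_1, n_2).
\]
The accompanying sign equations of Lemma \ref{lemm:DifferentialEquationGeneral} yield the parity constraints $\eta_1 = \xi_2 + [n_1]$ and $\eta_2 = \xi_3 + [n_1 + n_2]$.

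A third differential equation is derived by differentiating the $P_H$-equivariance of $K$ along the one-parameter subgroup $\exp(tE_{12}) \subset N_H$ at $t = 0$, after performing an $\overline{N}_G M_G A_G N_G$ Bruhat-type factorization of $x_0^{-1}\exp(tE_{12})x_0\overline{n}$ analogous to the one displayed explicitly for $x_1$ above. Applied to the reduced ansatz this gives a one-step recurrence
\[
(j+1)\, c_{j+1} = -(n_1 - j)(\lambda_3 - \lambda_2 - n_1 + j)\, c_j,
\]
whose unique (up to scale) solution is $c_j = (-n_1)_j (\lambda_3 - \lambda_2 - n_1)_j / j!$, matching the kernel in the statement.

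It remains to read off the dichotomy from the boundary behaviour of the recurrence. If $n_1 \leq n_2$ then $N = n_1$ and the factor $(n_1 - j)$ makes $c_{N+1} = 0$ automatically, giving a one-dimensional space of solutions for every $\lambda$. If $n_1 > n_2$ then $N = n_2$ and the requirement $c_{n_2 + 1} = 0$ is equivalent, since $n_1 - n_2 > 0$ and $(-n_1)_{n_2} \neq 0$, to the existence of $k_0 \in \{0, \ldots, n_2\}$ with $\lambda_3 - \lambda_2 - n_1 + k_0 = 0$; otherwise no non-zero solution exists. Unlike the $x_1 P_G$ situation, no multiplicity-two phenomenon occurs here, because the recurrence involves only a single linear $\lambda$-factor on the right and a $\lambda$-independent $(j+1)$ on the left, precluding the double vanishing that produced dimension two in Proposition \ref{prop:DSBOk=1}. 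The only non-routine ingredient is the Bruhat factorization underlying the third equation; everything else is elementary and directly parallel to the proof of Proposition \ref{prop:DSBOk=1}.
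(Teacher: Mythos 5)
Your approach is correct and is exactly what the paper intends by ``a similar analysis as previously leads to the following result'': pull $K$ back to $K_0(\overline{n})=K(x_0\overline{n})$, use the two Euler equations and two parity constraints from Lemma \ref{lemm:DifferentialEquationGeneral} (only the $\delta_\ell$ branch applies since $k=0$), derive a third equation from the $N_H$-equivariance via a Bruhat factorization of $\exp(tE_{23})\overline{n}$ (noting $x_0^{-1}\exp(tE_{12})x_0=\exp(tE_{23})$), and solve the recurrence. The Euler equations, the dichotomy analysis, and your remark that no multiplicity-two phenomenon can arise here because the coefficient of $c_{j+1}$ is the $\lambda$-independent $(j+1)$ are all correct, and that last remark is a genuinely useful structural observation.

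The gap is that the two central computations --- the parity constraints and the one-step recurrence --- are asserted rather than derived, and they are precisely where the bookkeeping matters. From Lemma \ref{lemm:DifferentialEquationGeneral} with $\ell=1$, $\delta_1(-1)=\diag(1,1,-1)$ acts on $\overline{n}$ by $(x,y,z)\mapsto(x,-y,-z)$, so the sign is $(-1)^{i_y+i_z}=(-1)^{n_1}$, and this must equal $(-1)^{\eta_2+\xi_3}$, forcing $\eta_2=\xi_3+[n_1]$; the $\ell=2$ constraint then gives $\eta_1=\xi_2+[n_1+n_2]$. In other words $[n_1]$ and $[n_1+n_2]$ come out attached to the opposite $\eta_i$'s from what you wrote --- and this is the assignment that the basis $\rest_0\circ\calD_2^{n_2-n_1}\circ\calD_3^{n_1}$ of the companion construction actually produces, since $\calD_3^{n_1}$ shifts $\xi_3$ and hence $\eta_2$. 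Similarly, carrying out the factorization
\[
\exp(tE_{23})\,\overline{n}=\begin{pmatrix}1&0&0\\ x+tz&1&0\\ z&\tfrac{y}{1+ty}&1\end{pmatrix}\begin{pmatrix}1&0&0\\ 0&1+ty&t\\ 0&0&(1+ty)^{-1}\end{pmatrix}
\]
yields the third equation $z\partial_xK_0-y^2\partial_yK_0=(\lambda_3-\lambda_2+1)\,y\,K_0$, and feeding this into the ansatz gives $(j+1)c_{j+1}=+(n_1-j)(\lambda_3-\lambda_2-n_1+j)c_j$, the opposite sign to the one you state; the solution then carries an extra $(-1)^j$, parallel to the $(-1)^j$ appearing in Proposition \ref{prop:SDBOforIota2} for the $x_2P_G$ case. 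You should carry out both of these steps explicitly rather than asserting them; they constitute the actual content of the proof, and the present write-up just transcribes the stated formulas without checking them against the equations.
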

Finally we also provide a construction of a basis of $\DSBO_0(\pi_{\xi,\lambda},\tau_{\eta,\nu})$ using the operators $\calF_i$, $\calD_i$ and $\varepsilon_H^{2,1}$.
\begin{proposition}  
    For $n_1,n_2\in \N$ and 
    \[   
    \eta_1=\xi_2+[n_1], \qquad\text{and}\qquad
    \eta_2=\xi_3+[n_1+n_2],
    \] we have two different cases:
    \begin{enumerate}
        \item If $n_1\leq n_2$ then a basis of $ \DSBO_0(\pi_{\xi,\lambda},\tau_{\eta,\nu})$ is given by
    \[
    \rest_0 \circ \calD_2^{n_2-n_1}\circ \calD_3^{n_1}.
    \]
        \item If $n_1>n_2$ and there exists $0\leq k_0 \leq n_2$ such that $\lambda_3-\lambda_2-n_1+k_0=0$ then a basis of $ \DSBO_0(\pi_{\xi,\lambda},\tau_{\eta,\nu})$ is given by
        \[
        \left(\varepsilon^{2,1}_H\right)^{n_1-k_0}\circ\rest_0\circ \calD_2^{k_0} \circ \calD_3^{n_2-k_0}. 
        \]
    \end{enumerate}
\end{proposition}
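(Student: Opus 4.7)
The overall strategy is to mirror the proofs of the analogous statements for the cases of support at $x_1P_G$ (Proposition \ref{prop:DSBOMultiplictyTwo}) and at $x_2P_G$. By Proposition \ref{prop:SDBOforIota0}, the space $\DSBO_0(\pi_{\xi,\lambda},\tau_{\eta,\nu})$ is at most one-dimensional under the stated hypotheses in each case, so it suffices to check that the proposed operator lies in $\DSBO_0(\pi_{\xi,\lambda},\tau_{\eta,\nu})$ and is non-zero.

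For case (1), the operator $\rest_0\circ\calD_2^{n_2-n_1}\circ\calD_3^{n_1}$ is of the form $\rest_0\circ\calL_{\alpha,0}$ with $\alpha=(n_2-n_1,n_1)$. The parameter tracking at the end of Section 3 shows that it is a $p_0$-differential symmetry breaking operator with target $\tau_{\eta_{\alpha,0}(\xi),\nu_{\alpha,0}(\lambda)}$, and a direct substitution using $n_1=\lambda_3-\nu_2-\tfrac{1}{2}$ and $n_2=\lambda_2+\lambda_3-\nu_1-\nu_2-1$ confirms the matching with $(\eta,\nu)$. For non-vanishing I would prove an analogue of Lemma \ref{lemma:OperatorForMultiplicityTwo}: using the explicit formulas for $\calD_2$ and $\calD_3$ in terms of $\Phi_1,\Phi_2,\Phi_3$ and $\varepsilon^{i,j}$, the reduction rules $\varepsilon^{a,b}\Phi_c=\delta_{b,c}\Phi_a$ together with $\rest_0\Phi_1=1$, $\rest_0\Phi_2=\rest_0\Phi_3=0$, and the commutation relation $[\varepsilon^{3,2},\varepsilon^{2,1}]=\varepsilon^{3,1}$, one expands $\rest_0\circ\calD_2^{a}\circ\calD_3^{b}$ in the linearly independent family of operators $\rest_0\circ(\varepsilon^{2,1})^{p}(\varepsilon^{3,2})^{q}(\varepsilon^{3,1})^{r}$, with coefficients given by explicit Pochhammer polynomials in $\lambda$. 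Specializing to $(a,b)=(n_2-n_1,n_1)$ and inspecting the leading coefficient shows that it is non-zero precisely when $n_1\leq n_2$, which is the case (1) hypothesis.

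For case (2), the argument parallels the second part of Proposition \ref{prop:DSBOMultiplictyTwo}. The middle piece $\rest_0\circ\calD_2^{k_0}\circ\calD_3^{n_2-k_0}$ is non-zero by the same leading-coefficient analysis, using $k_0\leq n_2$. Tracking the induction parameters through this composition, the relation $\lambda_3-\lambda_2-n_1+k_0=0$ in the hypothesis translates into the integrality condition required by Lemma \ref{lem:epsilonDiffOp} on the target side, so that $(\varepsilon_H^{2,1})^{n_1-k_0}$ becomes a non-zero $H$-intertwining differential operator between the appropriate principal series of $H$. Since the composition of two non-zero differential operators whose principal symbols are non-vanishing on the dense Bruhat cell remains non-zero, the resulting operator is a non-zero element of $\DSBO_0(\pi_{\xi,\lambda},\tau_{\eta,\nu})$ and therefore spans it by the one-dimensionality above.

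The main obstacle is the analogue of Lemma \ref{lemma:OperatorForMultiplicityTwo} for $\rest_0\circ\calD_2^a\circ\calD_3^b$: one has to write down and solve the recurrence for the coefficients in the basis of iterated $\varepsilon$-monomials and then verify non-vanishing of the leading coefficient in the two specializations relevant to cases (1) and (2). The commutation structure is essentially the same as in Lemma \ref{lemma:OperatorForMultiplicityTwo} up to a relabelling, so the same inductive method should carry through, with the Pochhammer factors reflecting the Bernstein--Sato identity \eqref{eq:BernsteinsatoD} iterated for $\calD_2$ and $\calD_3$.
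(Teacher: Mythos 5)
Your overall strategy is the right one and matches what the paper intends for $\rest_0$: combine Proposition~\ref{prop:SDBOforIota0} (one-dimensionality) with an explicit non-vanishing argument via an analogue of Lemma~\ref{lemma:OperatorForMultiplicityTwo}. But two of the steps you assert as checks "going through" do not, in fact, go through if one carries them out, and a third is only a heuristic.

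First, the parameter matching. For case~(1), with $k=0$ and $n=2$, the operator $\rest_0\circ\calD_2^{\alpha_1}\circ\calD_3^{\alpha_2}=\rest_0\circ\calL_{\alpha,0}$ lands in $\tau_{\eta_{\alpha,0}(\xi),\nu_{\alpha,0}(\lambda)}$ with $\eta_{\alpha,0}(\xi)=(\xi_2+[\alpha_1],\,\xi_3+[\alpha_2])$. With $\alpha=(n_2-n_1,n_1)$ this gives $\eta_1=\xi_2+[n_1+n_2]$ and $\eta_2=\xi_3+[n_1]$, which is the \emph{opposite} of the parity conditions in the statement (and is also what one gets directly from Lemma~\ref{lemm:DifferentialEquationGeneral} applied to $x_0$: the $\delta_1(-1)$ condition gives $\eta_2+\xi_3=[n_1]$, not $[n_1+n_2]$). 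So "a direct substitution confirms the matching" is false as written; either the student's check would have failed, or it would have flagged a likely typo in the proposition, and a blind proof must say which. Similarly in case~(2): the intermediate $\rest_0\circ\calD_2^{k_0}\circ\calD_3^{n_2-k_0}$ lands in $\nu'=(\lambda_2-\tfrac12-k_0,\,\lambda_3-\tfrac12-(n_2-k_0))$, so $\nu'_2-\nu'_1=\lambda_3-\lambda_2-n_2+2k_0=n_1-n_2+k_0$ once $\lambda_3-\lambda_2=n_1-k_0$ is imposed; Lemma~\ref{lem:epsilonDiffOp} then forces the exponent $n_1-n_2+k_0$, not $n_1-k_0$, and the same exponent is what is needed for the final $\nu''$ to equal $\nu$. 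The claim that the hypothesis "translates into the integrality condition" for exponent $n_1-k_0$ does not check out; the nice cancellation that gives $n_1-k_0$ in the $x_2P_G$ case comes from the $\widehat{e}_i$-shifts and $\det_H$-twist of $\calF_i$, which $\calD_i$ does not have.

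Second, the non-vanishing argument for case~(2). Appealing to "the composition of two non-zero differential operators whose principal symbols are non-vanishing on the dense Bruhat cell remains non-zero" is not a valid step here: $(\varepsilon_H^{2,1})^m$ is a differential operator on $H/P_H$, whereas $\rest_0\circ\calD_2^{k_0}\circ\calD_3^{n_2-k_0}$ is a $p_0$-differential symmetry breaking operator whose kernel sits over a single point $x_0P_G$ \emph{not} in the open Bruhat cell, and a non-zero differential operator can certainly kill a proper subspace of $C^\infty(H/P_H)$. One needs to actually compute, as the paper does in Lemma~\ref{lemma:OperatorForMultiplicityTwo} and the proof of Proposition~\ref{prop:DSBOMultiplictyTwo}, the expansion of the operator in a linearly independent family $\rest_0\circ(\varepsilon^{2,1})^{p}(\varepsilon^{3,2})^{q}(\varepsilon^{3,1})^{r}$ and verify that a specific coefficient is non-zero after composing with $(\varepsilon_H^{2,1})^m$. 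You correctly identify this as "the main obstacle," but that leaves both the case~(1) and case~(2) non-vanishing, i.e.\ the whole content of the statement beyond what Proposition~\ref{prop:SDBOforIota0} already gives, unproven.
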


%%%%%%%%%%
%%%%%%%%%%
%%%%%%%%%%
\bibliographystyle{alpha}
\bibliography{biblio}

\begin{thebibliography}{{Moe}97}

\bibitem[BC12]{Beck_Clerc}
R.~{Beckmann} and J.-L. {Clerc}.
\newblock Singular invariant trilinear forms and covariant (bi-)differential
  operators under the conformal group.
\newblock {\em Journal of Functional Analysis}, 262(10):4341--4376, 2012.

\bibitem[BCK20]{BenSaidClercKoufany2020}
S.~{Ben Sa{\"{\i}}d}, J.-L. {Clerc}, and K.~{Koufany}.
\newblock Conformally covariant bi-differential operators on a simple real
  {Jordan} algebra.
\newblock {\em Int. Math. Res. Not.}, 2020(8):2287--2351, 2020.

\bibitem[CF23]{CiobotaruFrahm23}
C.~{Ciobotaru} and J.~{Frahm}.
\newblock Symmetry breaking for $\operatorname{PGL}(2)$ over non-archimedean
  local fields.
\newblock {\em MS Lecture Note Series for Representations and Characters:
  Revisiting the Works of Harish-Chandra and André Weil}, 2023.

\bibitem[{Cle}17]{Clerc2017}
J.-L. {Clerc}.
\newblock Another approach to {Juhl}'s conformally covariant differential
  operators from {{\(S^n\)}} to {{\(S^{n-1}\)}}.
\newblock {\em SIGMA, Symmetry Integrability Geom. Methods Appl.}, 13:paper
  026, 18, 2017.

\bibitem[{Coh}75]{Cohen}
H.~{Cohen}.
\newblock {Sums involving the values at negative integers of \(L\)-functions of
  quadratic characters}.
\newblock {\em {Math. Ann.}}, 217:271--285, 1975.

\bibitem[Dah19]{TTD19}
T.~Dahl.
\newblock {\em Knapp-Stein Operators and Fourier Transformations}.
\newblock PhD thesis, Aarhus University, 2019.

\bibitem[DF24]{DitlevsenFrahm24}
J.~{Ditlevsen} and J.~{Frahm}.
\newblock Construction and analysis of symmetry breaking operators for the pair
  $( \operatorname{GL}(n+1,\mathbb{R}), \operatorname{GL}(n,\mathbb{R}))$.
\newblock {\em Preprint arXiv:2403.14267}, 2024.

\bibitem[DP07]{Dijk_Pev}
G.~van Dijk and M.~Pevzner.
\newblock Ring structures for holomorphic discrete series and {R}ankin-{C}ohen
  brackets.
\newblock {\em J. Lie Theory}, 17(2):283--305, 2007.

\bibitem[{Fra}23]{Frahm23}
J.~{Frahm}.
\newblock Symmetry breaking operators for strongly spherical reductive pairs.
\newblock {\em Publ. Res. Inst. Math. Sci.}, 59(2):259--337, 2023.

\bibitem[FW19]{FrahmWeiske19}
J.~{Frahm} and C.~{Weiske}.
\newblock Symmetry breaking operators for line bundles over real projective
  spaces.
\newblock {\em Journal of Lie Theory}, 2019.

\bibitem[FW20]{FrahmWeiske2020}
J.~{Frahm} and C.~{Weiske}.
\newblock Symmetry breaking operators for real reductive groups of rank one.
\newblock {\em J. Funct. Anal.}, 279(5):69, 2020.
\newblock Id/No 108568.

\bibitem[GS64]{GelfandShilov64}
I.M. {Gelfand} and G.E. {Shilov}.
\newblock {\em Generalized functions. Vol. 1, Properties and operations}.
\newblock Academic Press, New York-London, 1964.

\bibitem[KKP16]{KobayashiKuboPevzner16}
T.~{Kobayashi}, T.~{Kubo}, and M.~{Pevzner}.
\newblock {\em {Conformal symmetry breaking operators for differential forms on
  spheres}}, volume 2170.
\newblock Singapore: Springer, 2016.

\bibitem[Kna01]{Knapp01}
A.~W. Knapp.
\newblock {\em Representation theory of semisimple groups}.
\newblock Princeton Landmarks in Mathematics. Princeton University Press,
  Princeton, NJ, 2001.
\newblock An overview based on examples, Reprint of the 1986 original.

\bibitem[{Kna}03]{Kna03}
A.~W. {Knapp}.
\newblock The {G}indikin--{K}arpelevič formula and intertwining operators.
\newblock {\em American Mathematical Society Translations (2), vol. 210}, pages
  145 --159, 2003.

\bibitem[K{\O}24]{KuboOrsted2024}
T.~{Kubo} and B.~{{\O}rsted}.
\newblock On the intertwining differential operators from a line bundle to a
  vector bundle over the real projective space.
\newblock {\em Preprint, arXiv:2312.15621}, 2024.

\bibitem[{Kob}12]{Kobayashi12Verma}
T.~{Kobayashi}.
\newblock Restrictions of generalized {Verma} modules to symmetric pairs.
\newblock {\em Transform. Groups}, 17(2):523--546, 2012.

\bibitem[{Kob}15]{Kobayashi15}
T.~{Kobayashi}.
\newblock {A program for branching problems in the representation theory of
  real reductive groups}.
\newblock In {\em Representations of reductive groups. In honor of the 60th
  birthday of David A. Vogan, Jr. Proceedings of the conference, MIT,
  Cambridge, MA, USA, May 19--23, 2014}, pages 277--322. Cham:
  Birkh\"auser/Springer, 2015.

\bibitem[KP16a]{KobayashiPevzner16_1}
T.~Kobayashi and M.~Pevzner.
\newblock Differential symmetry breaking operators: {I}. {G}eneral theory and
  {F}-method.
\newblock {\em Selecta Math. (N.S.)}, 22(2):801--845, 2016.

\bibitem[KP16b]{KP_SBO}
T.~Kobayashi and M.~Pevzner.
\newblock Differential symmetry breaking operators: {II}. {R}ankin-{C}ohen
  operators for symmetric pairs.
\newblock {\em Selecta Math. (N.S.)}, 22(2):847--911, 2016.

\bibitem[KS15a]{KobayashiSpeh-I-15}
T.~{Kobayashi} and B.~{Speh}.
\newblock {\em Symmetry breaking for representations of rank one orthogonal
  groups}, volume 1126 of {\em Mem. Am. Math. Soc.}
\newblock Providence, RI: American Mathematical Society (AMS), 2015.

\bibitem[KS15b]{KobayashiSpeh15}
T.~{Kobayashi} and B.~{Speh}.
\newblock {\em {Symmetry breaking for representations of rank one orthogonal
  groups}}, volume 1126.
\newblock Providence, RI: American Mathematical Society (AMS), 2015.

\bibitem[KS18]{KobayashiSpeh18}
T.~{Kobayashi} and B.~{Speh}.
\newblock {\em {Symmetry breaking for representations of rank one orthogonal
  groups. II}}, volume 2234.
\newblock Singapore: Springer, 2018.

\bibitem[{Moe}97]{Moeglin97}
C.~{Moeglin}.
\newblock {\em Proc. Sympos. Pure Math., vol. 61}.
\newblock American Mathematical Society, 1997.

\bibitem[PV23]{P-V23}
V.~Pérez-Valdés.
\newblock Conformally covariant differential symmetry breaking operators for a
  vector bundle of rank 3 on {$S^3$}.
\newblock {\em International Journal of Mathematics}, 34(12), 2023.

\bibitem[SZ12]{SunZhu2012}
B.~{Sun} and C.~{Zhu}.
\newblock Multiplicity one theorems: the {Archimedean} case.
\newblock {\em Ann. Math. (2)}, 175(1):23--44, 2012.

\end{thebibliography}

\end{document}